\def\N{{{\Bbb N}}}
\def\Z{{{\Bbb Z}}}
\def\T{{{\Bbb T}}}
\def\R{{\Bbb R}}
\def\C{{\Bbb C}}
\def\l{{\lambda }}
\def\a{{\alpha }}
\def\D{{\Delta }}
\def\a{{\alpha}}
\def\b{{\beta}}
\def\d{{\delta}}
\def\e{{\varepsilon}}
\def\vp{{\varphi}}
\def\t{{\theta }}
\def\g{{\gamma }}
\def\w{{\omega }}
\def\L{{\Lambda }}
\def\i{{\mathrm{i} }}
\def\L{\mathcal{L}}
\def\){\right)}
\def\({\left(}
\numberwithin{equation}{section}
\newtheorem{corollary}{Corollary}[section]
\newtheorem{lemma}{Lemma}[section]
\newtheorem{theorem}{Theorem}[section]
\newtheorem{proposition}{Proposition}[section]
\newtheorem{remark}{Remark}[section]
\newtheorem{definition}{Definition}[section]
\newtheorem*{thma}{Theorem A}
\newtheorem*{thmb}{Theorem B}
\begin{document}

\title{Sharp estimates of approximation of periodic functions in H\"older spaces}

\author[Yurii
Kolomoitsev]{Yurii
Kolomoitsev$^{\text{a},*,1,2}$}
\address{Institute of Mathematics of NAS of Ukraine,
Tereschenkivska st. 3, 01601 Ukraine, Kiev-4}
\email{kolomus1@mail.ru}

\author[J\"urgen~Prestin]{J\"urgen~Prestin$^\text{a,1}$}
\address{Universit\"at zu L\"ubeck,
Institut f\"ur Mathematik,
Ratzeburger Allee 160,
23562 L\"ubeck}
\email{prestin@math.uni-luebeck.de}

\thanks{$^\text{a}$Universit\"at zu L\"ubeck,
Institut f\"ur Mathematik,
Ratzeburger Allee 160,
23562 L\"ubeck}

\thanks{$^1$Supported by FP7-People-2011-IRSES Project number 295164  (EUMLS: EU-Ukrainian Mathematicians for Life Sciences).}
\thanks{$^2$Supported by the Grant of the National Academy of Sciences of Ukraine
for young scientists.}

\thanks{$^*$Corresponding author}

\thanks{E-mail address: kolomus1@mail.ru}

\date{\today}
\subjclass[2000]{42A10} \keywords{Best approximation, Strong converse inequalities, Moduli of smoothness, H\"older spaces, $L_p$ spaces, $0<p\le \infty$}

\begin{abstract}
The main purpose of the paper is to study sharp estimates of approximation of periodic functions in the H\"older spaces $H_p^{r,\a}$ for all $0<p\le\infty$ and $0<\a\le r$. By using  modifications of the classical moduli of smoothness, we give improvements of the direct and inverse theorems of approximation and prove the criteria for the precise order of decrease of the best approximation in these spaces. Moreover, we obtained strong converse inequalities for general methods of approximation of periodic functions in $H_p^{r,\a}$.
\end{abstract}

\maketitle

\bigskip
\bigskip
\section{Introduction and notations}

Let $\T\cong[0,2\pi)$ be the torus. As usual, the space
$L_p=L_p(\T)$, $0<p<\infty$, consists of measurable complex
functions, which are $2\pi$-periodic  and
$$
\Vert f\Vert_p=\bigg(\frac1{2\pi}\int_{\T}|f(x)|^p \mathrm{d}x\bigg)^{\frac 1p}<\infty.
$$
For simplicity, by $L_\infty=L_\infty(\T)$ we denote the space of all
$2\pi$-periodic continuous functions on $\T$ which is equipped
with the norm
$$
\Vert f\Vert_\infty=\max_{x\in\T}|f(x)|.
$$

Let us consider the linear Fourier means
\begin{equation}\label{eqI1}
  \mathcal{L}_n(f,x)=\int_{\T} f(t)K_n(x-t)\mathrm{d}t,\quad x\in\T,\quad f\in L_p,\quad 1\le p\le\infty,
\end{equation}
which are generated by some kernel
\begin{equation}\label{eqI2}
  K_n(x)=\sum_{\nu=-n}^n a_{\nu,n}e^{\i \nu  x},\quad a_{0,n}=1,\quad n\in\N.
\end{equation}
To estimate the error of approximation of functions by Fourier means (\ref{eqI1}) one usually uses the classical moduli of smoothness:
$$
\w_k(f,t)_p=\sup_{0<\d<t}\Vert \D_\d^k f\Vert_p,
$$
where
$$
\D_\d^k f(x)=\sum_{\nu=0}^k (-1)^\nu \binom{k}{\nu}f(x+\nu \d).
$$

For a long time, there has been some interest in investigating the approximation of functions by the Fourier means in the H\"older spaces.
This interest originated from the study of a certain class of integro-differential equations and from applications in error estimations for singular integral equations (see, for example,~\cite{Ka} and~\cite{Pr}).

We will say that $f\in H_p^{r,\a}$, $0<p\le\infty$, $0<\a\le r$, $r\in\N$, if $f\in L_p$ and
\begin{equation}\label{eqI4}
  \Vert f\Vert_{H_p^{r,\a}}=\Vert f\Vert_p+|f|_{H_p^{r,\a}}<\infty,
\end{equation}
where
\begin{equation*}
  |f|_{H_p^{r,\a}}=\sup_{h>0}\frac{\Vert \D_h^r f\Vert_p}{h^\a}=\sup_{h>0}\frac{\w_r(f,h)_p}{h^\a}.
\end{equation*}
Following the initial works of Kalandiya~\cite{Ka} and Pr\"ossdorf~\cite{Pr} the problems of approximation in H\"older spaces $H_p^{r,\a}$ were studied by Ioakimidis~\cite{Io},  Rempulska and Walczak~\cite{RW}, Bustamante and Roldan~\cite{BR} and many others.
One can find an interesting survey on this subject in~\cite{BJ}, see also~\cite{PrPr}.

The standard results on approximation in $H_p^{r,\a}$, $1\le p\le\infty$, have the following form.
If the means $\{\mathcal{L}_n\}$ possess some good properties and $f\in H_p^{r,\a}$, then
\begin{equation}\label{eqI5}
  \Vert f-\mathcal{L}_n(f)\Vert_{H_p^{r,\a}}\le {C}\sup_{0<h\le 1/n}\frac{\w_r(f,h)_p}{h^\a},\quad n\in\N,
\end{equation}
(see, e.g., \cite{Pr}, \cite{PrPr}, \cite{RW} or \cite{BR}).
Note that, the estimates of type (\ref{eqI5}) in general are not sharp.
One can verify this by using a function $f\in L_p$ with
$f^{(s)}\in H_p^{r,\a}$ and sufficiently large~$s$.

It turns out that one can obtain more general and sharper results by using
the simple fact that any Fourier multiplier in $L_p$, $1\le p\le \infty$, is also a Fourier multiplier in the H\"older spaces $H_p^{r,\a}$.
Indeed, let $\L$ be any Fourier multiplier in $L_p$ i.e., a bounded linear operator in $L_p$, which commutates with translates.
Then
\begin{equation}\label{eqI9}
  \Vert \D_h^r (\L f)\Vert_p=\Vert \L (\D_h^r  f)\Vert_p\le \Vert \mathcal{L}\Vert_{L_p\to L_p} \Vert \D_h^r  f\Vert_p.
\end{equation}
Thus, some known  results about approximation of functions in $L_p$ spaces with $1\le p\le\infty$ can be easily transferred to the H\"older spaces. For example, let means (\ref{eqI1}) be such that for all $f\in L_p$ and $n\in\N$:
\begin{equation}\label{eqI9_1}
\Vert f-\mathcal{L}_n(f)\Vert_p\asymp \w_k\(f,\frac1n\)_p,
\end{equation}
where, as usual, $A(f, n)\asymp B(f, n)$ means that there exist positive
constants $c$ and $C$ such that $c A(f,n)\le B(f,n)\le C A(f,n)$ for all $f$ and $n$.
Then, by using (\ref{eqI9}) and (\ref{eqI9_1}), one can easily derive
\begin{equation}\label{eqI10}
  \Vert f-\mathcal{L}_n(f)\Vert_{H_p^{r,\a}}\asymp \w_k\(f,\frac1n\)_p+\sup_{h>0}h^{-\a}\w_k\(\D_h^r f,\frac1n\)_p
\end{equation}
(see Theorem~\ref{th2} below).

Note that the inequalities of the form (\ref{eqI9_1}) were first obtained by Trigub in~\cite{T68}.
See also the results of Ditzian and Ivanov~\cite{DI}, in which such inequalities are called strong converse inequalities.

It is easy to see that (\ref{eqI10}) for arbitrary $k$ independently of $r$ and with a two-sided estimate is an essential improvement of an estimate like (\ref{eqI5}). In this sense the problem of approximation in the H\"older spaces $H_p^{r,\a}$ for $1\le p\le \infty$ is  an easy problem.
But the analogous problem of approximation of functions in $H_p^{r,\a}$ with $0<p<1$ is much more difficult
because the spaces $L_p$ with $0<p<1$ are essentially different from the spaces  $L_p$ with $p\ge 1$:
as it was mentioned by Peetre~\cite{peetre} these spaces are "pathological" spaces.


%
%

The main purpose of the paper is to study sharp estimates of approximation of periodic functions in the H\"older spaces $H_p^{r,\a}$ for all $0<p\le\infty$ and $0<\a\le r$. By using  modifications of the classical moduli of smoothness, we give improvements of known direct and inverse theorems of approximation (see~\cite{BR} and~\cite{BJ99}) and prove the criteria of the precise order of decrease of the best approximation in $H_p^{r,\a}$.
We also obtain estimates like (\ref{eqI10}) for the general Fourier means (\ref{eqI1}) and for the  families of linear polynomial means
\begin{equation}\label{eqI14}
  \mathcal{L}_{n,\l}(f,x)=\frac1{4n+1}\sum_{j=0}^{4n}f(t_j+\l)K_n(x-t_j-\l),
\end{equation}
where $t_j={2\pi j}/{(4n+1)}$, $\l\in\R$, and the kernel $K_n$ is as in (\ref{eqI2}).
These means were intensively studied in several papers of Runovskii and Schmeisser (see, e.g.~\cite{RS2}, \cite{RSFaaF}).


The paper is organized as follows: The auxiliary results are formulated and proved in
Section~2. In Section~3 we prove the analogs in $H_p^{r,\a}$ of some classical theorems of approximation theory.
We also obtain some new properties of the best approximation in $H_p^{r,\a}$.
In Section~4 we prove the two-sided inequalities for approximation of functions by some linear summation methods of Fourier series and by families of linear polynomial means. In Section~5 we consider some corollaries and make concluding remarks.

We denote by $c$ and $C$  positive constants depending on the indicated parameters. We also denote $p_1=\min(p,1)$.

\section{Preliminary remarks and auxiliary results}

Let us recall several properties of the H\"older spaces $H_p^{r,\a}$, $0<p\le \infty$.

\begin{lemma}\label{lemHolder} (See~\cite[Ch. 2, Theorem 10.1 for the case $q=\infty$]{DL}.)
Let $0<p\le\infty$, $0<\a<r<k$, and $k,r\in\N$. Then the (quasi)-norms of a function in $H_p^{r,\a}$  and $H_p^{k,\a}$ are equivalent.

\end{lemma}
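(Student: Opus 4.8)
The plan is to prove the two-sided inequality between $\Vert f\Vert_{H_p^{r,\a}}$ and $\Vert f\Vert_{H_p^{k,\a}}$ by treating the two directions separately. Since both quasi-norms contain the same term $\Vert f\Vert_p$, it suffices to compare the seminorms $|f|_{H_p^{r,\a}}=\sup_{t>0}t^{-\a}\w_r(f,t)_p$ and $|f|_{H_p^{k,\a}}=\sup_{t>0}t^{-\a}\w_k(f,t)_p$, and in each direction transfer the resulting estimate to the full quasi-norms.

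For the first direction, namely bounding the higher-order modulus by the lower-order one, I would use the factorization $\D_\d^k=\D_\d^{k-r}\D_\d^r$ of the forward difference operator. Applying the $L_p$ quasi-triangle inequality (with exponent $p_1=\min(p,1)$) to the $2^{k-r}$ translates defining $\D_\d^{k-r}$ gives $\Vert\D_\d^k f\Vert_p\le C_{k,r}\Vert\D_\d^r f\Vert_p$, hence $\w_k(f,t)_p\le C_{k,r}\w_r(f,t)_p$. Dividing by $t^\a$ and taking the supremum over $t>0$ yields $|f|_{H_p^{k,\a}}\le C\,|f|_{H_p^{r,\a}}$, and therefore $\Vert f\Vert_{H_p^{k,\a}}\le C\Vert f\Vert_{H_p^{r,\a}}$. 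This step is elementary and uses only $k>r$.

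The substantive direction is the reverse estimate, which rests on the Marchaud inequality, valid for all $0<p\le\infty$ (in the quasi-Banach range written with $p_1$-powers) in the form
$$
\w_r(f,t)_p^{p_1}\le C\,t^{rp_1}\left(\int_t^1\frac{\w_k(f,u)_p^{p_1}}{u^{rp_1+1}}\,\mathrm{d}u+\Vert f\Vert_p^{p_1}\right),\qquad 0<t\le 1.
$$
Assuming $|f|_{H_p^{k,\a}}=M<\infty$, so that $\w_k(f,u)_p\le M u^\a$, and using the hypothesis $\a<r$, I would evaluate the integral $\int_t^1 u^{(\a-r)p_1-1}\,\mathrm{d}u\asymp t^{(\a-r)p_1}$; after multiplying by $t^{rp_1}$ this produces the bound $\w_r(f,t)_p\le C(M+\Vert f\Vert_p)\,t^\a$ on $(0,1]$, where the term $t^{rp_1}\Vert f\Vert_p^{p_1}$ is absorbed using $t^r\le t^\a$ for $t\le 1$. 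For $t>1$ one invokes the trivial bound $\w_r(f,t)_p\le 2^{r/p_1}\Vert f\Vert_p$ together with $t^{-\a}\le 1$. Taking the supremum over all $t>0$ gives $|f|_{H_p^{r,\a}}\le C\Vert f\Vert_{H_p^{k,\a}}$, hence $\Vert f\Vert_{H_p^{r,\a}}\le C\Vert f\Vert_{H_p^{k,\a}}$, completing the equivalence.

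The main obstacle is establishing the Marchaud inequality in the range $0<p<1$, where the failure of the triangle inequality forces one to carry $p_1$-powers through the standard dyadic telescoping argument for $\D_\d^r f$; one must verify that the summation over dyadic scales of the higher-order differences still converges and yields the stated integral bound in the quasi-norm. A secondary, purely technical point is the passage from the localized estimate on $(0,1]$ to the supremum over all $t>0$, but this is routine since the $r$-th modulus saturates at a constant multiple of $\Vert f\Vert_p$ for large $t$. Alternatively, the whole statement may simply be quoted from the cited result in \cite{DL} once the $q=\infty$ (H\"older) normalization there is matched with the seminorm $|f|_{H_p^{r,\a}}$ used here.
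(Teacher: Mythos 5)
Your proof is correct, but note that the paper itself does not prove this lemma at all: it simply quotes it from DeVore--Lorentz \cite[Ch.~2, Theorem~10.1 with $q=\infty$]{DL}. What you have written is essentially the standard argument underlying that citation, so the comparison here is between your reconstruction and a black-box reference. Your easy direction is exactly the paper's property (\ref{svmod1}), and your substantive direction via the Marchaud inequality in $p_1$-power form is the classical route; the bookkeeping is right, and you invoke the strict inequality $\alpha<r$ precisely where it is needed (to get $\int_t^1 u^{(\alpha-r)p_1-1}\,\mathrm{d}u\asymp t^{(\alpha-r)p_1}$ and to absorb $t^{rp_1}\Vert f\Vert_p^{p_1}$ into $t^{\alpha p_1}$ for $t\le 1$), which matters because the equivalence genuinely fails at $\alpha=r$, as the paper emphasizes immediately after the lemma. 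The one soft spot is the one you flag yourself: the Marchaud inequality for $0<p<1$ is only described, not proved, in your argument. That is acceptable provided you cite it --- it is available in the literature the paper already relies on, e.g.\ it follows from the inverse theorems of Storozhenko--Krotov--Oswald \cite{SKO} (see also Ditzian--Hristov--Ivanov \cite{DHI}) --- but one should be clear that your proof then replaces one citation (the norm equivalence itself) by another (Marchaud for $0<p\le\infty$). That trade is still worthwhile: Marchaud is a more primitive and more broadly useful ingredient than the equivalence it implies, so your version is more self-contained in spirit than the paper's bare reference.
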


Lemma~\ref{lemHolder} does not hold in the case $\a=r$.
However, there exist the following descriptions of $H_p^{r,r}$ for $1\le p< \infty$.

\begin{lemma}\label{lemHolderD} (See~\cite[Ch. 2, \S 9]{DL} and~\cite{Br}.)
Let $r\in \N$. Then
\begin{enumerate}
  \item Suppose $1<p< \infty$. Then
  $f\in H_p^{r,r}$ iff $f^{(r-1)}\in AC$ and $f^{(r)}\in L_p$.
  Moreover,
  $$
  \Vert f\Vert_{H_p^{r,r}}=\Vert f\Vert_p+\Vert f^{(r)}\Vert_p.
  $$
  \item
  $f\in H_1^{r,r}$ iff $f^{(r-2)}\in AC$ (in the case $r\ge 2$) and $f^{(r-1)}$ is a function of bounded variation on $[0,2\pi]$.
  Moreover,
  $$
  \Vert f\Vert_{H_1^{r,r}}=\Vert f\Vert_1+{\rm var}_{[0,2\pi)} f^{(r-1)}.
  $$
\end{enumerate}
\end{lemma}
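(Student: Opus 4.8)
The plan is to handle both parts through a single mechanism: the integral representation of the $r$-th difference for the upper (easy) bound, and a compactness argument that locates the $r$-th distributional derivative in the correct dual space for the lower (hard) bound. The splitting at $p=1$ will come solely from which compactness is available. Throughout I use that $f\in H_p^{r,r}$ already gives $f\in L_p$.

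For the upper bounds I would first treat part (1): assuming $f^{(r-1)}\in AC$ and $f^{(r)}\in L_p$, start from the representation
\[
\D_h^r f(x)=\int_0^h\cdots\int_0^h f^{(r)}(x+u_1+\cdots+u_r)\,du_1\cdots du_r,
\]
valid because $f^{(r-1)}$ is locally absolutely continuous. Taking $L_p$ norms and applying the generalized Minkowski inequality together with translation invariance of $\Vert\cdot\Vert_p$ yields $\Vert\D_h^r f\Vert_p\le h^r\Vert f^{(r)}\Vert_p$, hence $|f|_{H_p^{r,r}}\le\Vert f^{(r)}\Vert_p$. For part (2) I would pull out only $r-1$ integrations, writing $\D_h^r f=\D_h^{r-1}(\D_h f)$ and using $f^{(r-2)}\in AC$ to obtain
\[
\D_h^r f(x)=\int_0^h\cdots\int_0^h \D_h f^{(r-1)}(x+u_1+\cdots+u_{r-1})\,du_1\cdots du_{r-1}.
\]
The elementary estimate $\Vert\D_h g\Vert_1\le h\,{\rm var}_{[0,2\pi)}g$ for $g\in BV$ (from $g(x+h)-g(x)=\int_{(x,x+h]}dg$ and Fubini) then gives $\Vert\D_h^r f\Vert_1\le h^r\,{\rm var}_{[0,2\pi)}f^{(r-1)}$, so $|f|_{H_1^{r,r}}\le{\rm var}_{[0,2\pi)}f^{(r-1)}$.

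For the reverse inequalities and the identification of the derivative I would argue at the level of distributions. By summation by parts, for every $C^\infty$ test function $\vp$ one has $\langle h^{-r}\D_h^r f,\vp\rangle=\int_\T f\,(h^{-r}\D_{-h}^r\vp)$, and since $h^{-r}\D_{-h}^r\vp\to(-1)^r\vp^{(r)}$ uniformly as $h\to0^+$, the family $h^{-r}\D_h^r f$ converges to the distributional derivative $f^{(r)}$ in the sense of distributions. Now assume $|f|_{H_p^{r,r}}<\infty$, so this family is bounded in $L_p$ by $|f|_{H_p^{r,r}}$. When $1<p<\infty$, reflexivity of $L_p$ gives a weakly convergent subsequence whose limit must coincide with the distributional limit $f^{(r)}$; weak lower semicontinuity of the norm then yields $f^{(r)}\in L_p$ with $\Vert f^{(r)}\Vert_p\le|f|_{H_p^{r,r}}$, and in particular $f^{(r-1)}\in AC$. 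When $p=1$ the family is bounded in $L_1\subset M(\T)$, so weak-$*$ compactness of bounded sets in $M(\T)=C(\T)^*$ produces a subsequential limit measure $\mu$ with $\Vert\mu\Vert_{M(\T)}\le|f|_{H_1^{r,r}}$; identifying $\mu$ with $f^{(r)}$ shows $f^{(r)}$ is a finite measure, equivalently $f^{(r-2)}\in AC$ and $f^{(r-1)}\in BV$ with ${\rm var}_{[0,2\pi)}f^{(r-1)}=\Vert\mu\Vert_{M(\T)}\le|f|_{H_1^{r,r}}$.

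Combining the two directions gives $|f|_{H_p^{r,r}}=\Vert f^{(r)}\Vert_p$ for $1<p<\infty$ and $|f|_{H_1^{r,r}}={\rm var}_{[0,2\pi)}f^{(r-1)}$, whence the stated norm formulas follow after adding $\Vert f\Vert_p$. The main obstacle, and the reason the statement splits at $p=1$, is precisely the compactness step: for $1<p<\infty$ the bounded family has a weak limit \emph{inside} $L_p$, whereas $L_1$ is not reflexive, so its natural limit lives only in the larger space of measures, forcing the weaker conclusion $f^{(r-1)}\in BV$ rather than $f^{(r)}\in L_1$. Care is also needed to ensure the weak or weak-$*$ subsequential limit is genuinely the distributional derivative and not merely some cluster point, which is exactly what the test-function computation guarantees, and thereby that the norm bounds are equalities rather than one-sided estimates.
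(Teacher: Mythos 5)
The paper itself offers no proof of this lemma --- it is quoted from \cite[Ch.~2, \S 9]{DL} and \cite{Br} --- so there is no internal argument to compare against; your proof is correct and is essentially the classical one behind those citations. The upper bounds via the integral representation of $\D_h^r f$ (through $f^{(r)}$ for $1<p<\infty$, resp.\ through $\D_h f^{(r-1)}$ together with $\Vert \D_h g\Vert_1\le h\,{\rm var}_{[0,2\pi)}g$ for $p=1$), and the converse via distributional convergence of $h^{-r}\D_h^r f$ combined with weak compactness in $L_p$ ($1<p<\infty$, by reflexivity), resp.\ weak-$*$ compactness in the space of measures $M(\T)$ ($p=1$), is the standard route, and you correctly isolate non-reflexivity of $L_1$ as the reason the conclusion weakens to $f^{(r-1)}\in BV$ at $p=1$. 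Two caveats. First, you take for granted the integration facts that a periodic distribution whose $r$-th derivative lies in $L_p$ (resp.\ equals a finite measure) satisfies $f^{(r-1)}\in AC$ (resp.\ $f^{(r-2)}\in AC$ and $f^{(r-1)}\in BV$, with variation equal to the measure norm for the right representative); this is where the remaining, albeit standard, work lies. Second, with the paper's convention $\D_h^r f(x)=\sum_{\nu=0}^r(-1)^\nu\binom{r}{\nu}f(x+\nu h)$, which is $(-1)^r$ times the usual forward difference, your limits acquire harmless sign factors, and since the paper normalizes $\Vert f\Vert_1=\frac{1}{2\pi}\int_\T |f(x)|\,\mathrm{d}x$ while ${\rm var}_{[0,2\pi)}f^{(r-1)}$ is unnormalized, the exact equality in part (2) only holds after matching this constant (e.g.\ for $f(x)=\sin x$, $r=1$, one gets $|f|_{H_1^{1,1}}=2/\pi$ while ${\rm var}_{[0,2\pi)}f=4$); this is an artifact of transcribing \cite{DL} into the paper's normalization, not a flaw in your reasoning.
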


At the same time, if $1\le p\le \infty$, $r\in\N$, and $f\in L_p$ is such that
$
\w_r(f,h)_p=o(h^r),
$
then $f\equiv {\rm const}$. In the case $0<p<1$ the situation is totally different. In~\cite{Ra}, it was shown that for any $r\in \N$
and $\a\in (0,r-1+1/p]$ there exists a function $f\in L_p$ such that
$\w_r(f,h)_p=\mathcal{O}(h^\a)$.
In particular, for
\begin{equation}\label{eqCondModR}
f(x)=\sum_{\nu=0}^\infty\frac{\sin((2\nu+1)x-\pi(r-1)/2)}{(2\nu+1)^r}
\end{equation}
it is known from~\cite{Ra} that
\begin{equation}\label{eqCondMod}
  \w_r(f,h)_p=\mathcal{O}(h^{r-1+\frac 1p}).
\end{equation}
For the complete description of functions satisfying condition (\ref{eqCondMod}) see~\cite{Kr} for the case $r=1$ and~\cite{K03} for $r\ge 2$.

Note that, in spite of the fact that the H\"older spaces $H_p^{r,\a}$ make sense for all $\a\in (0,r-1+1/p]$ we will consider only the case $\a\le r$
(see Remark~\ref{rem1}).

Let $\mathcal{T}_n$ be the set of all trigonometric
polynomials of order at most $n$
\begin{equation*}
\mathcal{T}_n=\bigg\{T(x)=\sum_{\nu=-n}^n c_\nu {\rm e}^{\i\nu x}~:~
c_\nu \in \C\bigg\},
\end{equation*}
and let
$$
E_n(f)_p=\inf_{T\in \mathcal{T}_n}\Vert f-T\Vert_p
$$
be the error of the best approximation of a function $f$ in $L_p$ by trigonometric polynomials of order at most $n$.
A trigonometric polynomial $T_n\in \mathcal{T}_n$  is called polynomial of best approximation in $L_p$, if
$$
\Vert f-T_n\Vert_p=E_n(f)_p.
$$
Note that for any $f\in L_p$ and $n\in \N\cup\{0\}$ such polynomials always exist (see~\cite[Ch. 3, \S 1]{DL}).

Recall the Jackson-type theorem in $L_p$-spaces, see~\cite{St51} for $1\le p\le\infty$ and~\cite{SO} for $0<p<1$ (see also~\cite{SKO} and~\cite{I}).
\begin{thma}\label{lem01}
Let $f\in L_p$, $0< p\le\infty$, $k\in \N$, and $n\in\N$. Then
\begin{equation}\label{eqI12}
  E_n(f)_p\le C\w_k\(f,\frac 1n\)_p,
\end{equation}
where $C$ is a constant independent of $n$ and $f$.
\end{thma}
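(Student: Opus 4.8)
The plan is to exhibit, for each $n$, an explicit polynomial $G_n(f)\in\mathcal{T}_n$ whose error $f-G_n(f)$ is an average of $k$-th order differences of $f$, and then to estimate that average. Fix a nonnegative even Jackson-type kernel $J_n\in\mathcal{T}_n$ --- for instance a suitable power $\gamma_{\nu,s}\(\sin(\nu t/2)/\sin(t/2)\)^{2s}$ with $\nu\asymp n/s$ and $s$ large --- normalized by $\frac1{2\pi}\int_\T J_n(t)\,\mathrm{d}t=1$ and satisfying the moment bounds $\frac1{2\pi}\int_\T|t|^m J_n(t)\,\mathrm{d}t\le C_m n^{-m}$ for all $m$ below a threshold that grows with $s$. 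Define
\begin{equation*}
G_n(f,x)=\frac1{2\pi}\int_\T\sum_{j=1}^k(-1)^{j+1}\binom{k}{j}f(x+jt)\,J_n(t)\,\mathrm{d}t.
\end{equation*}
A short Fourier-coefficient computation shows $G_n(f)\in\mathcal{T}_n$; and since $\frac1{2\pi}\int_\T J_n=1$ while $f(x)=\D_t^k f(x)+\sum_{j=1}^k(-1)^{j+1}\binom{k}{j}f(x+jt)$, we obtain the error representation
\begin{equation*}
f(x)-G_n(f,x)=\frac1{2\pi}\int_\T\D_t^k f(x)\,J_n(t)\,\mathrm{d}t.
\end{equation*}
As $G_n(f)\in\mathcal{T}_n$, we have $E_n(f)_p\le\Vert f-G_n(f)\Vert_p$, so it remains to bound the right-hand side in $L_p$.

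For $1\le p\le\infty$ I would apply Minkowski's integral inequality --- legitimate because $J_n\ge0$ and $\Vert\cdot\Vert_p$ is a genuine norm --- to get
\begin{equation*}
\Vert f-G_n(f)\Vert_p\le\frac1{2\pi}\int_\T\Vert\D_t^k f\Vert_p\,J_n(t)\,\mathrm{d}t\le\frac1{2\pi}\int_\T\w_k(f,|t|)_p\,J_n(t)\,\mathrm{d}t.
\end{equation*}
Invoking the monotonicity property $\w_k(f,\l\d)_p\le(1+\l)^k\w_k(f,\d)_p$ with $\d=1/n$ and $\l=n|t|$ bounds the last integral by $\w_k(f,1/n)_p\cdot\frac1{2\pi}\int_\T(1+n|t|)^k J_n(t)\,\mathrm{d}t$. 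Choosing $s$ so large that the moments up to order $k$ are controlled gives $\frac1{2\pi}\int_\T(1+n|t|)^k J_n(t)\,\mathrm{d}t\le C$, and (\ref{eqI12}) follows at once.

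The case $0<p<1$ is the real difficulty, and it is precisely where the pathology of $L_p$ surfaces. Minkowski's integral inequality is unavailable, and convolution against $J_n$ fails to be bounded on $L_p$ uniformly in $n$, so one cannot pass the quasi-norm inside the integral. Moreover the cancellation encoded in $\D_t^k$ must be kept intact: splitting the sum over $j$ and estimating each convolution $\frac1{2\pi}\int_\T f(\cdot+jt)J_n(t)\,\mathrm{d}t$ separately only returns a bound by $\Vert f\Vert_p$, which is worthless. The remedy I would pursue is to discretize the $t$-integral over an equally spaced grid of $\asymp n$ nodes and to replace the norm estimate by the $p$-subadditivity $\Vert\sum_l g_l\Vert_p^p\le\sum_l\Vert g_l\Vert_p^p$, controlling the resulting trigonometric-polynomial quasi-norms through the Marcinkiewicz--Zygmund and Nikolskii inequalities. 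The Nikolskii step costs a factor $n^{1/p-1}$, which must be absorbed by the faster moment decay of a Jackson kernel of sufficiently high order $s$, and weighed against the weaker monotonicity $\w_k(f,\l\d)_p\le C(1+\l)^{k/p_1}\w_k(f,\d)_p$ valid for $p<1$. This balancing of the discretization loss against the kernel moments is the crux of the matter; the full argument is carried out in~\cite{SO}.
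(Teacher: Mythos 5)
You should note first that the paper does not prove this statement at all: Theorem~A is quoted as a known result, with \cite{St51} cited for $1\le p\le\infty$ and \cite{SO} (see also \cite{SKO}, \cite{I}) for $0<p<1$. So the comparison is really with those references, not with an argument in the paper. For $1\le p\le\infty$ your argument is the classical Jackson--Stechkin proof: the operator $G_n$ built from a generalized Jackson kernel, the error representation as an average of $k$-th differences, Minkowski's integral inequality, and the moment bounds $\frac1{2\pi}\int_\T|t|^mJ_n(t)\,\mathrm{d}t\le C_mn^{-m}$. This half is correct and complete (the Fourier-coefficient check that $G_n(f)\in\mathcal{T}_n$ works because $\widehat{J_n}(j\nu)=0$ whenever $|j\nu|>n$, and the modulus property you invoke is the paper's inequality (\ref{svmod2}) with $p_1=1$), and it is in substance what \cite{St51} provides.

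For $0<p<1$ you have correctly diagnosed why this route collapses (no Minkowski inequality, no uniformly bounded convolutions, the need to keep the cancellation in $\D_t^k$ intact), but what you offer in its place is a strategy, not a proof: the discretization/Marcinkiewicz--Zygmund/Nikolskii balancing is never carried out, and you close by deferring to \cite{SO}. Since the paper itself defers to \cite{SO} for exactly this case, your treatment does not conflict with the paper; but as a standalone proof it leaves the genuinely hard half of the statement to the literature. Be aware also that the published arguments in \cite{SO}, \cite{SKO}, \cite{I} do not follow your sketch very closely: they proceed by intermediate approximation --- first a piecewise-polynomial or Steklov-type approximant whose local errors are assembled with the $p$-subadditivity $\Vert\sum_l g_l\Vert_p^p\le\sum_l\Vert g_l\Vert_p^p$, and only then approximation of that intermediate object by trigonometric polynomials --- rather than by discretizing the Jackson convolution itself. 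Your instincts (exploit $p$-subadditivity, never split the $k$-th difference into separate convolutions) are the right ones, but the step you yourself call ``the crux'' is precisely the part that remains missing from your write-up.
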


We will  use the following Stechkin-Nikolskii type inequality (see~\cite{DHI}).

\begin{thmb}\label{lem1}
Let $0<p\le\infty$, $n\in\N$, $0<h\le \pi/n$, and $r\in\N$. Then the following two-sided inequality holds for any trigonometric polynomial $T_n \in \mathcal{T}_n$
\begin{equation}\label{eqA1}
  h^r\Vert T_n^{(r)}\Vert_p\asymp \Vert \D_h^r T_n\Vert_p,
\end{equation}
where $\asymp$ is a two-sided inequality with absolute constants independent of $T_n$ and $h$.
Moreover, if $T_n$ is a polynomial of the best approximation of a function $f\in L_p$, then we have
\begin{equation}\label{eqA2}
\Vert \D_h^r T_n\Vert_p\le C\w_r\(f,\frac1n\),
\end{equation}
where $C$ is a constant independent of $T_n$, $h$, and $f$.
\end{thmb}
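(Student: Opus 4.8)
The plan is to read the two-sided relation (\ref{eqA1}) as a pair of Fourier multiplier bounds on $\mathcal{T}_n$, and to deduce (\ref{eqA2}) from Jackson's theorem (Theorem~A). For $1\le p\le\infty$ the relation (\ref{eqA1}) is classical: the two symbols below have uniformly bounded variation, so the operators are convolutions with kernels of uniformly bounded $L_1$-norm. The real content is the range $0<p<1$, where neither duality nor the Minkowski inequality is available, and this is where I expect the main difficulty to lie.

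Writing $T_n=\sum_{|\nu|\le n}c_\nu e^{\i\nu x}$, we have $T_n^{(r)}=\sum(\i\nu)^r c_\nu e^{\i\nu x}$ and $\D_h^r T_n=\sum(e^{\i\nu h}-1)^r c_\nu e^{\i\nu x}$. Hence $h^{-r}\D_h^r T_n$ is obtained from $T_n^{(r)}$ by the Fourier multiplier with symbol $m(\nu)=\left(\frac{e^{\i\nu h}-1}{\i\nu h}\right)^r$, and conversely $T_n^{(r)}$ is recovered from $h^{-r}\D_h^r T_n$ by the reciprocal symbol $1/m(\nu)$ (numerator and denominator vanish only at $\nu=0$, which carries no coefficient for $r\ge1$). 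Thus (\ref{eqA1}) amounts to both multiplier operators being bounded on $\mathcal{T}_n$ in $L_p$ with constants independent of $n$ and $h$. The key point is that $m(\nu)=g_r(\nu h)$ and $1/m(\nu)=\tilde g_r(\nu h)$, where $g_r(t)=\left(\frac{e^{\i t}-1}{\i t}\right)^r$ is entire and $\tilde g_r=1/g_r$ is $C^\infty$ on $(-2\pi,2\pi)$; since $0<h\le\pi/n$, on the relevant range $\nu h=(nh)(\nu/n)$ stays in $[-\pi,\pi]$. Fixing a $C^\infty$ bump $\eta$ with $\eta\equiv1$ on $[-1,1]$ and $\supp\eta\subset[-3/2,3/2]$, I replace the symbols on $\mathcal{T}_n$ by $\Phi^{\pm}_{nh}(\nu/n)$, where $\Phi^{+}_s(x)=g_r(sx)\eta(x)$ and $\Phi^{-}_s(x)=\tilde g_r(sx)\eta(x)$; the chosen support keeps $sx$ inside $(-2\pi,2\pi)$ for $s=nh\le\pi$, so both $\Phi^{\pm}_s$ are $C^\infty$, compactly supported, with all derivatives bounded uniformly in $s\in(0,\pi]$.

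The main step is then a multiplier lemma valid for $0<p\le1$: if $\Phi\in C^\infty$ is compactly supported with bounded derivatives, the operator $T\mapsto\sum_\nu\Phi(\nu/n)\hat T(\nu)e^{\i\nu x}$ is bounded on $\mathcal{T}_n$ in $L_p$ uniformly in $n$. I would prove this by writing the operator as a convolution $\Phi_n*T$ with $\Phi_n(x)=\sum_\nu\Phi(\nu/n)e^{\i\nu x}$, and combining the kernel estimate $\Vert\Phi_n\Vert_p\le Cn^{1-1/p}$ (which follows from $\Phi_n(x)\approx n\,\check\Phi(nx)$ and the rapid decay of $\check\Phi$) with the convolution inequality $\Vert\Phi_n*T\Vert_p\le Cn^{1/p-1}\Vert\Phi_n\Vert_p\Vert T\Vert_p$ for polynomials of degree at most $cn$ (obtained from a Marcinkiewicz--Zygmund discretization together with the $p$-triangle inequality); the powers of $n$ cancel. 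Applying this with $\Phi=\Phi^{+}_{nh}$ and $\Phi=\Phi^{-}_{nh}$ and using the uniform bounds on their derivatives yields both halves of (\ref{eqA1}). I expect this $0<p<1$ estimate, and in particular the Marcinkiewicz--Zygmund machinery behind the convolution inequality, to be the hard part.

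Finally, for (\ref{eqA2}) let $T_n$ be a polynomial of best approximation to $f$. With $p_1=\min(p,1)$ the quasi-triangle inequality gives $\Vert\D_h^r T_n\Vert_p^{p_1}\le\Vert\D_h^r(T_n-f)\Vert_p^{p_1}+\Vert\D_h^r f\Vert_p^{p_1}$. For the first term, expanding the difference and using translation invariance of the norm gives $\Vert\D_h^r(T_n-f)\Vert_p^{p_1}\le\left(\sum_{\nu=0}^r\binom{r}{\nu}^{p_1}\right)\Vert f-T_n\Vert_p^{p_1}=C\,E_n(f)_p^{p_1}$, and Theorem~A (with $k=r$) yields $E_n(f)_p\le C\w_r(f,1/n)_p$. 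For the second term, $\Vert\D_h^r f\Vert_p\le\w_r(f,h)_p\le\w_r(f,\pi/n)_p\le C\w_r(f,1/n)_p$, the last step by monotonicity and the doubling property of the modulus. Combining these estimates proves (\ref{eqA2}).
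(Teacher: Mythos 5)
The paper itself does not prove Theorem~B: it is quoted as a known Stechkin--Nikolskii type result with a citation to Ditzian--Hristov--Ivanov~\cite{DHI}, so there is no internal proof to compare yours against. Judged on its own merits, your proposal is correct, and it is essentially a reconstruction of the argument used in that cited literature. The reduction of (\ref{eqA1}) to the uniform boundedness on $\mathcal{T}_n$ of the two multiplier operators with symbols $g_r(sx)\eta(x)$ and $\tilde g_r(sx)\eta(x)$, $s=nh\in(0,\pi]$, is sound: $g_r$ is entire, $\tilde g_r=1/g_r$ is smooth on $(-2\pi,2\pi)$, and your choice of cutoff support $[-3/2,3/2]$ keeps $|sx|\le 3\pi/2<2\pi$, so all derivatives of both symbols are bounded uniformly in $s$; since the kernel estimate only uses finitely many derivative bounds, uniformity over the whole family is automatic. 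Your key lemma for $0<p\le 1$ --- the Poisson-summation bound $|\Phi_n(x)|\le Cn(1+n|x|)^{-N}$ giving $\Vert\Phi_n\Vert_p\le Cn^{1-1/p}$, combined with the Nikolskii-type convolution inequality $\Vert K\ast T\Vert_p\le Cn^{1/p-1}\Vert K\Vert_p\Vert T\Vert_p$ obtained from a quadrature formula for the trigonometric polynomial $t\mapsto K(t)T(x-t)$ plus a Marcinkiewicz--Zygmund inequality valid for $0<p<1$ --- is exactly the standard machinery behind such results, and you correctly identify it as the technical core; it is precisely this machinery that the paper sidesteps by citing \cite{DHI}. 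Two small points to tidy: your parenthetical about $\nu=0$ should instead say that the ratio $(e^{\i\nu h}-1)^r/(\i\nu h)^r$ extends to $1$ at $\nu=0$ (and that the zeroth Fourier coefficients of $T_n^{(r)}$ and $\D_h^r T_n$ vanish anyway, so the value there is immaterial); and in the quadrature step the product $K(t)T(x-t)$ has degree up to $5n/2$ in $t$, so the nodes must be chosen for that degree. Your derivation of (\ref{eqA2}) from the quasi-triangle inequality, Theorem~A with $k=r$, and the properties (\ref{svmod1})--(\ref{svmod2}) of the modulus is correct and is the expected argument.
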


In our paper we will  often use the following two properties of moduli of smoothness
 (see \cite[Ch. 2, \S~7 and  Ch. 12, \S~5]{DL} or
\cite[Ch. 4]{TB}).
Let $f\in L_p$, $0<p\le \infty$, $r\le k$, $k,r\in\N$. Then
\begin{equation}\label{svmod1}
       \omega_k(f,h)_p\le 2^{\frac{k-r}{p_1}}\omega_r(f,h)_p\le 2^\frac k{p_1}\Vert f\Vert_p,\quad h>0,
\end{equation}
\begin{equation}\label{svmod2}
       \omega_r(f,\lambda h)_p\le r^{\frac 1{p_1}-1}(1+\lambda)^{\frac
       1{p_1}+r-1}\omega_r(f,h)_p,\quad h>0,\quad \l>0.
\end{equation}

In this paper, we will deal with functions in $L_p(\T^{2})$ which depend
additionally on a parameter $\lambda\in\T$.
We denote by $\Vert\cdot\Vert_{\overline{p}}$  the $p$-(quasi-)norm
with respect to both the main variable $x\in\T$ and the parameter
$\lambda\in\T$, i.e.
$$
\Vert\cdot\Vert_{\overline{p}}=\Vert
\Vert\cdot\Vert_{p;\,x}\Vert_{p;\,\l}\,,
$$
where $\Vert\cdot\Vert_{p;\,x}$ and $\Vert\cdot\Vert_{p;\,\l}$ are
the $p$-norms (quasi-norms, if $0<p<1$) with respect to $x$ and
$\l$, respectively.

We will understand the error of approximation of $f\in L_p$ by a family of linear operators
$\{\mathcal{L}_{n,\,\lambda}\}_{n\in\N,\, \lambda\in\T}$ in the H\"older spaces $H_p^{r,\a}$ in the following sense:
\begin{equation}\label{eqMeansH}
 \Vert f-\mathcal{L}_{n,\,\lambda}(f)\Vert_{H_{\overline{p}}^{r,\a}}=\Vert
f-\mathcal{L}_{n,\,\lambda}(f)\Vert_{\overline{p}}+|f-\mathcal{L}_{n,\,\lambda}(f)|_{H_{\overline{p}}^{r,\a}},
\end{equation}
where
\begin{equation}\label{eqMeansH2}
  |f-\mathcal{L}_{n,\,\lambda}(f)|_{H_{\overline{p}}^{r,\a}}=\sup_{h>0}\frac{\Vert
\D_h^r (f-\mathcal{L}_{n,\,\lambda}(f))\Vert_{\overline{p}}}{h^\a}.
\end{equation}
As usual, the norm of a linear and bounded operator $\mathcal{L}_n$ in $L_p$ is given by
\begin{equation*}
  \Vert \L_n\Vert_{(p)}=\sup_{\Vert f\Vert_p\le 1}\Vert \mathcal{L}_n(f)\Vert_p.
\end{equation*}
A sequence $\{\mathcal{L}_n\}_{n\in\N}$ is said to be bounded if the sequence of its norms is bounded by some positive constant independent of $n$.
Now let us consider the family of operators $\{\mathcal{L}_{n,\,\lambda}\}$ in $L_p(\T^2)$. We define the (averaged) norm of such a family by
\begin{equation*}
 \Vert \{\mathcal{L}_{n,\l}\}\Vert_{(p)} =\sup_{\Vert f\Vert_p\le 1}\Vert \mathcal{L}_{n,\l}(f,x)\Vert_{\overline{p}}.
\end{equation*}
By the analogy, a family $\{\mathcal{L}_{n,\l}\}$ is said to be bounded if the sequence of its norms is bounded. Everywhere below $\{\mathcal{L}_{n,\,\lambda}\}$ stands  for  families of linear polynomial means of type (\ref{eqI14}).

Let us consider the inequalities of the type (\ref{eqI9}).
It is well-known that there are no non-trivial Fourier multipliers
in $L_p$ with $0<p<1$, but even if we replace in (\ref{eqI9}) the operator $\L$ by some family  $\{\L_{n,\l}\}$, this inequality
does not hold in $L_p$, $0<p<1$. Indeed, suppose the following inequality holds for some non-trivial family  $\{\L_{n,\l}\}$
\begin{equation*}
\(\int_\T {\Vert \D_h^r \mathcal{L}_{n,\l}(f)\Vert_p^p}\mathrm{d}\l\)^\frac1p \le C_{r,p} \Vert \D_h^r f\Vert_p, \quad f\in L_p,\quad 0<p<1.
\end{equation*}
Note that, for the function $f$ from (\ref{eqCondModR})
we have $\Vert \D_h^r f\Vert_p=\mathcal{O}(h^{r-1+1/p})$, but by Theorem~B one only has that
$\Vert \D_h^r \L_{n,\l}(f)\Vert_{\overline{p}}\asymp h^r$ as $h\to 0$. Thus, we have a contradiction.

However, the following lemma holds.
\begin{lemma}\label{lem2}
Let $f\in L_p$, $0<p\le \infty$, $0< \a \le r$, $r,n\in \N$, and let the family $\{\mathcal{L}_{n,\l}\}$ be bounded in $L_p$. Then
\begin{equation}\label{eqA3}
 \(\int_\T \sup_{h>0}\Vert {h^{-\a} \D_h^r \mathcal{L}_{n,\l}(f)\Vert_p^p}\mathrm{d}\l\)^\frac1p\le C\sup_{h\ge 1/n}\frac{\omega_r(f, h)_p}{h^{\a}},
\end{equation}
with the usual modification in the case $p=\infty$, where $C$ is a constant independent of $\a$, $f$ and $n$.
\end{lemma}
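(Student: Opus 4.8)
\noindent\emph{Proof plan.} The idea is to isolate, inside $\L_{n,\l}(f)$, a convolution (hence translation invariant) part on which $\D_h^r$ commutes with the operator, and to treat the rest as a perturbation that Jackson's theorem makes small; this replaces the multiplier estimate (\ref{eqI9}), which is unavailable for $0<p<1$. Fix $n$ and let $T_n\in\mathcal{T}_n$ be a polynomial of best approximation of $f$ in $L_p$, so that by Theorem~A, $\Vert f-T_n\Vert_p=E_n(f)_p\le C\w_r(f,1/n)_p$. The means (\ref{eqI14}) are built on a quadrature that is exact on $\mathcal{T}_{4n}$, and since $T_n(\cdot)K_n(x-\cdot)\in\mathcal{T}_{2n}\subset\mathcal{T}_{4n}$ for each $x$, one has $\L_{n,\l}(T_n)=\L_n(T_n)$ for every $\l$, with $\L_n$ the convolution mean (\ref{eqI1}). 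Thus
$$
\L_{n,\l}(f)=\L_n(T_n)+\L_{n,\l}(f-T_n).
$$
I will use two facts about $\L_n$ on $\mathcal{T}_n$: it commutes with $\D_h^r$ (being a convolution), and $\Vert\L_n(P)\Vert_p\le C\Vert P\Vert_p$ for $P\in\mathcal{T}_n$, the latter because $\L_n(P)=\L_{n,\l}(P)$ for all $\l$, so integrating in $\l$ bounds $\Vert\L_n(P)\Vert_p$ by $\Vert\{\L_{n,\l}\}\Vert_{(p)}\Vert P\Vert_p$.

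For the translation invariant part (independent of $\l$) I would write $\D_h^r\L_n(T_n)=\L_n(\D_h^r T_n)$ and, using $\D_h^r T_n\in\mathcal{T}_n$, obtain $\Vert\D_h^r\L_n(T_n)\Vert_p\le C\Vert\D_h^r T_n\Vert_p$ for all $h>0$; it then suffices to bound $\sup_{h>0}h^{-\a}\Vert\D_h^r T_n\Vert_p$. For $0<h\le\pi/n$ the Stechkin--Nikolskii inequality (\ref{eqA1}) and then (\ref{eqA2}) give
$$
h^{-\a}\Vert\D_h^r T_n\Vert_p\asymp h^{r-\a}\Vert T_n^{(r)}\Vert_p\le(\pi/n)^{r-\a}\Vert T_n^{(r)}\Vert_p\asymp n^\a\Vert\D_{\pi/n}^r T_n\Vert_p\le Cn^\a\w_r(f,1/n)_p,
$$
where $\a\le r$ is used to ensure that $h^{r-\a}$ is largest at $h=\pi/n$. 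For $h>\pi/n$ I would use the quasi-triangle inequality, $\Vert\D_h^r(f-T_n)\Vert_p\le 2^{r/p_1}E_n(f)_p$ and (\ref{eqA2}) to get $h^{-\a}\Vert\D_h^r T_n\Vert_p\le C(h^{-\a}\w_r(f,h)_p+n^\a\w_r(f,1/n)_p)$; since $h^{-\a}<n^\a$ here, both terms are at most $\sup_{h\ge 1/n}\w_r(f,h)_p/h^\a$.

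For the remainder put $P_\l:=\L_{n,\l}(f-T_n)\in\mathcal{T}_n$. Splitting $\sup_{h>0}$ at $h=\pi/n$, using (\ref{eqA1}) together with Bernstein's inequality $\Vert P_\l^{(r)}\Vert_p\le Cn^r\Vert P_\l\Vert_p$ (which follows from (\ref{eqA1}) and (\ref{svmod1})) for $h\le\pi/n$, and the crude bound $\Vert\D_h^r P_\l\Vert_p\le 2^{r/p_1}\Vert P_\l\Vert_p$ with $h^{-\a}\le(\pi/n)^{-\a}$ for $h>\pi/n$, I obtain $\sup_{h>0}h^{-\a}\Vert\D_h^r P_\l\Vert_p\le Cn^\a\Vert P_\l\Vert_p$. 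Raising to the $p$-th power, integrating in $\l$, and invoking the boundedness of the family and Theorem~A yields
$$
\Big(\int_\T\sup_{h>0}\Vert h^{-\a}\D_h^r P_\l\Vert_p^p\,\mathrm{d}\l\Big)^{1/p}\le Cn^\a\Vert\{\L_{n,\l}\}\Vert_{(p)}\Vert f-T_n\Vert_p\le Cn^\a\w_r(f,1/n)_p,
$$
which is again $\le C\sup_{h\ge1/n}\w_r(f,h)_p/h^\a$ because $n^\a\w_r(f,1/n)_p=\w_r(f,1/n)_p/(1/n)^\a$. Combining the two parts by the quasi-triangle inequality in $H_{\overline{p}}^{r,\a}$ (with $p_1=\min(p,1)$, and the integral in $\l$ replaced by a supremum when $p=\infty$) then gives (\ref{eqA3}). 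The main obstacle is the large-$h$ range: for a general polynomial the H\"older seminorm is not controlled by its derivative, so it is essential that the large differences of the translation invariant part are estimated directly through $\w_r(f,h)_p$, and that the $\l$-dependent part is made small by Jackson's theorem.
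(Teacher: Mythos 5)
Your proof is correct and takes essentially the same approach as the paper: both arguments rest on the Jackson polynomial $T_n$ (Theorem~A), the quadrature-exactness identity $\mathcal{L}_{n,\l}(T_n)=\mathcal{L}_n(T_n)$ from~\cite{RSFaaF}, the Stechkin--Nikolskii inequality (Theorem~B) for the range of small $h$, and the boundedness of the family combined with crude difference bounds for the remainder $\mathcal{L}_{n,\l}(f-T_n)$. The only difference is organizational: you decompose $f=T_n+(f-T_n)$ first and treat each piece over all $h>0$, whereas the paper first splits the supremum at $h=1/n$, proves the estimate for $h\ge 1/n$ via the same decomposition, and then disposes of $0<h<1/n$ in one stroke by applying Theorem~B to the full polynomial $\mathcal{L}_{n,\l}(f)\in\mathcal{T}_n$ to reduce to $h=1/n$.
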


\begin{proof}
We treat only the case $0<p<\infty$; similar arguments apply when $p=\infty$.

Let us first prove that for any $n\in \N$
\begin{equation}\label{eqlem2pr1}
 \(\int_\T \sup_{h\ge 1/n}\Vert {h^{-\a} \D_h^r \mathcal{L}_{n,\l}(f)\Vert_p^p}\mathrm{d}\l\)^\frac1p \le C\sup_{h\ge 1/n}\frac{\omega_r(f, h)_p}{h^{\a}}.
\end{equation}
By~\cite[p. 28]{RSFaaF}, we have
\begin{equation}\label{eqlem2pr300}
  \begin{split}
\mathcal{L}_{n,\l}(T_n)=\mathcal{L}_{n} (T_n)
  \end{split}
\end{equation}
for any $T_n\in \mathcal{T}_n$, $n\in \N$, and $\l\in \R$.
Therefore, we have also
\begin{equation}\label{eqlem2pr3}
  \begin{split}
\D_h^r \mathcal{L}_{n,\l}(f)&=\D_h^r \mathcal{L}_{n,\l}(f-T_n)+\D_h^r \mathcal{L}_{n,\l}(T_n)=\D_h^r \mathcal{L}_{n,\l}(f-T_n)+\mathcal{L}_{n,\l}(\D_h^r T_n)
  \end{split}
\end{equation}
for any $h>0$ and $\l\in \R$.
By Theorem~A, we  choose polynomials $T_n \in \mathcal{T}_n$, $n\in\N$, such that
\begin{equation}\label{eqlem2pr2}
  \Vert f-T_n\Vert_p\le C \w_r\(f,\frac1n\)_p.
\end{equation}
Thus, using (\ref{eqlem2pr300})--(\ref{eqlem2pr2}), and the boundedness of $\{\mathcal{L}_{n,\l}\}$, we obtain (\ref{eqlem2pr1}) by
\begin{equation*}\label{eqlem2pr4}
  \begin{split}
    \int_\T \sup_{h\ge 1/n}&\frac{\Vert  \D_h^r \mathcal{L}_{n,\l}(f)\Vert_p^p}{h^{\a p}}\mathrm{d}\l\\
   &\le C\left(\int_\T \sup_{h\ge 1/n}\frac{\Vert \D_h^r \mathcal{L}_{n,\l}(f-T_n)\Vert_p^p}{h^{\a p}} \mathrm{d}\l + \int_\T \sup_{h\ge 1/n}\frac{\Vert  \mathcal{L}_{n}(\D_h^r T_n)\Vert_p^p}{h^{\a p}} \mathrm{d}\l\right)\\
      &\le C\left(n^{\a p} \int_\T \Vert  \mathcal{L}_{n,\l}(f-T_n)\Vert_p^p \mathrm{d}\l + \sup_{h\ge 1/n}\int_\T \frac{\Vert  \mathcal{L}_{n,\l}(\D_h^r T_n)\Vert_p^p}{h^{\a p}} \mathrm{d}\l\right)\\
&\le C\Vert\{{\L}_{n,\l}\} \Vert_{(p)}^p\( n^{\a p}\Vert f-T_n\Vert_p^p+\sup_{h\ge 1/n} \frac{\Vert \D_h^r T_n\Vert_p^p}{h^{\a p}} \)\\
&\le C\(n^{\a p} \Vert f-T_n\Vert_p^p+\sup_{h\ge 1/n} \frac{\Vert \D_h^r f\Vert_p^p}{h^{\a p}}\)\\
&\le C\(n^{\a p}\w_r\(f,1/n\)_p^p+\sup_{h\ge 1/n} \frac{\Vert \D_h^r f\Vert_p^p}{h^{\a p}} \)
\le C \sup_{h\ge 1/n} \frac{\w_r\(f,h\)_p^p}{h^{\a p}}.
  \end{split}
\end{equation*}

Now, by using Theorem~B and (\ref{eqlem2pr1}), we derive
\begin{equation*}
  \begin{split}
\int_\T \sup_{h>0}\frac{\Vert \D_h^r \mathcal{L}_{n,\l}(f)\Vert_p^p}{h^{\a p}}\mathrm{d}\l
&\le  \int_\T \left\{\sup_{0<h< 1/n}+\sup_{h\ge 1/n}\right\}\frac{\Vert \D_h^r \mathcal{L}_{n,\l}(f)\Vert_p^p}{h^{\a p}}\mathrm{d}\l\\
&\le C\( n^{\a p}\int_\T {\Vert \D_{1/n}^r \mathcal{L}_{n,\l}(f)\Vert_p^p}\mathrm{d}\l+ \sup_{h\ge 1/n} \frac{\w_r\(f,h\)_p^p}{h^{\a p}}\)\\
&\le C\sup_{h\ge 1/n} \frac{\w_r\(f,h\)_p^p}{h^{\a p}}.
  \end{split}
\end{equation*}
The last inequality implies (\ref{eqA3}).
\end{proof}

We will also use the following result, which can be obtained by repeating the proofs of formulas (6.1) and (6.3) from~\cite{RSFaaF}.
\begin{lemma}\label{lem3}
Let $f\in L_p$, $1\le p\le\infty$, $n\in \N$, and let $\{\mathcal{L}_{n,\l}\}$ be bounded operators in $L_p$. Then the following equivalence holds
\begin{equation*}
\Vert f-\mathcal{L}_n(f)\Vert_p\asymp \Vert f-\mathcal{L}_{n,\l}(f)\Vert_{\overline{p}}.
\end{equation*}
\end{lemma}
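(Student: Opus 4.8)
The plan is to compare $\mathcal{L}_n$ and $\{\mathcal{L}_{n,\l}\}$ through two structural facts. The first is the \emph{averaging identity}: the mean $\mathcal{L}_n$ is recovered as the average of the family $\mathcal{L}_{n,\l}$ over the shift parameter $\l$. The second is the exactness relation (\ref{eqlem2pr300}), namely $\mathcal{L}_{n,\l}(T_n)=\mathcal{L}_n(T_n)$ for every $T_n\in\mathcal{T}_n$. Throughout I would use the elementary observation that, since $\l$ ranges over $\T$ with the normalized measure, any function $g$ that does not depend on $\l$ satisfies $\Vert g\Vert_{\overline{p}}=\Vert g\Vert_p$.

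First I would establish
\[
\mathcal{L}_n(f,x)=\frac{1}{2\pi}\int_\T \mathcal{L}_{n,\l}(f,x)\,\mathrm{d}\l .
\]
This follows by inserting the definition (\ref{eqI14}), interchanging the finite sum with the integral in $\l$, and performing the $2\pi$-periodic change of variable $u=t_j+\l$ in each summand, after which every term collapses to $\mathcal{L}_n(f,x)$. Because this exhibits $\mathcal{L}_n(f)$ as an average of $\mathcal{L}_{n,\l}(f)$ and $f$ itself is independent of $\l$, one gets $f-\mathcal{L}_n(f)=\frac{1}{2\pi}\int_\T (f-\mathcal{L}_{n,\l}(f))\,\mathrm{d}\l$; applying Minkowski's integral inequality (legitimate since $p\ge1$) and then Jensen's inequality for $t\mapsto t^p$ yields the easy half
\[
\Vert f-\mathcal{L}_n(f)\Vert_p\le \Vert f-\mathcal{L}_{n,\l}(f)\Vert_{\overline{p}} .
\]
The same chain of inequalities applied directly to $\mathcal{L}_{n,\l}(f)$ shows $\Vert\mathcal{L}_n(f)\Vert_p\le\Vert\mathcal{L}_{n,\l}(f)\Vert_{\overline{p}}$, whence $\Vert\mathcal{L}_n\Vert_{(p)}\le\Vert\{\mathcal{L}_{n,\l}\}\Vert_{(p)}$, so the boundedness of the family forces $\mathcal{L}_n$ to be bounded in $L_p$; this will be needed below.

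For the reverse inequality I would choose a polynomial $T_n\in\mathcal{T}_n$ of best approximation to $f$ in $L_p$, so that $\Vert f-T_n\Vert_p=E_n(f)_p$, and use (\ref{eqlem2pr300}) to write
\[
f-\mathcal{L}_{n,\l}(f)=(f-T_n)-\mathcal{L}_{n,\l}(f-T_n)+\bigl(T_n-\mathcal{L}_n(T_n)\bigr).
\]
Taking $\Vert\cdot\Vert_{\overline{p}}$ and using the triangle inequality, the first term equals $E_n(f)_p$, the second is at most $\Vert\{\mathcal{L}_{n,\l}\}\Vert_{(p)}E_n(f)_p$ by the definition of the averaged operator norm, and the third is $\l$-independent, hence equals $\Vert T_n-\mathcal{L}_n(T_n)\Vert_p$, which I would estimate by $(1+\Vert\mathcal{L}_n\Vert_{(p)})E_n(f)_p+\Vert f-\mathcal{L}_n(f)\Vert_p$ after inserting $\pm f$ and $\pm\mathcal{L}_n(f)$. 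Finally, since $\mathcal{L}_n(f)\in\mathcal{T}_n$ one has $E_n(f)_p\le\Vert f-\mathcal{L}_n(f)\Vert_p$, so collecting the three estimates gives $\Vert f-\mathcal{L}_{n,\l}(f)\Vert_{\overline{p}}\le C\Vert f-\mathcal{L}_n(f)\Vert_p$ and completes the equivalence.

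The individual estimates are routine; the crux is the averaging identity, on which the whole argument rests, together with the bookkeeping remark that $\l$-independent functions have equal $\overline{p}$- and $p$-norms thanks to the normalization of the $\l$-measure. The one genuinely substantive point is the treatment of the third term: one must route the estimate of $T_n-\mathcal{L}_n(T_n)$ through $f$ and $\mathcal{L}_n(f)$ and then absorb the residual $E_n(f)_p$ into $\Vert f-\mathcal{L}_n(f)\Vert_p$ via $\mathcal{L}_n(f)\in\mathcal{T}_n$, which is what makes the final constant independent of $f$ and $n$.
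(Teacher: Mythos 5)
Your proof is correct, and it is essentially the argument the paper itself invokes: the paper gives no proof of Lemma~\ref{lem3}, but refers to the proofs of formulas (6.1) and (6.3) in~\cite{RSFaaF}, which rest exactly on your two ingredients — the averaging identity expressing $\mathcal{L}_n(f)$ as the mean of $\mathcal{L}_{n,\l}(f)$ over $\l$ (plus Minkowski/Jensen for $p\ge1$), and the polynomial invariance $\mathcal{L}_{n,\l}(T_n)=\mathcal{L}_n(T_n)$ combined with best approximation and the Lebesgue-type bound $E_n(f)_p\le\Vert f-\mathcal{L}_n(f)\Vert_p$. The only cosmetic point is the normalization: for the averaging identity to hold exactly, the convolution in (\ref{eqI1}) must carry the factor $1/(2\pi)$ (as the condition $a_{0,n}=1$, i.e.\ reproduction of constants, indicates is intended), which your computation implicitly assumes.
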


The lemma below gives an equivalent definition of (\ref{eqMeansH})--(\ref{eqMeansH2}).
\begin{lemma}\label{lem3XXX}
Let $f\in L_p$, $0<p\le \infty$, $0< \a \le r$, $r,n\in \N$, and let the family $\{\mathcal{L}_{n,\l}\}$ be bounded in $L_p$. Then
\begin{equation*}
   |f-\mathcal{L}_{n,\,\lambda}(f)|_{H_{\overline{p}}^{r,\a}}\asymp \(\int_\T |f-\mathcal{L}_{n,\,\lambda}(f)|_{H_{{p}}^{r,\a}}^p{\rm d}\l\)^\frac1p
\end{equation*}
with the usual modification in the case $p=\infty$.
\end{lemma}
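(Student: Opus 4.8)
The plan is to establish the two one-sided estimates hidden in $\asymp$ separately. Writing $g_\l:=f-\mathcal{L}_{n,\l}(f)$, the definitions unwind to
$$
|g_\l|_{H_{\overline p}^{r,\a}}^p=\sup_{h>0}\frac1{h^{\a p}}\int_\T\Vert\D_h^r g_\l\Vert_p^p\,\mathrm d\l,
\qquad
\int_\T|g_\l|_{H_p^{r,\a}}^p\,\mathrm d\l=\int_\T\sup_{h>0}\frac{\Vert\D_h^r g_\l\Vert_p^p}{h^{\a p}}\,\mathrm d\l
$$
(with the usual modification for $p=\infty$). The estimate $|g_\l|_{H_{\overline p}^{r,\a}}\le\bigl(\int_\T|g_\l|_{H_p^{r,\a}}^p\mathrm d\l\bigr)^{1/p}$ is the trivial direction: for each fixed $h$ the integrand $h^{-\a p}\Vert\D_h^r g_\l\Vert_p^p$ is pointwise $\le\sup_{t>0}t^{-\a p}\Vert\D_t^r g_\l\Vert_p^p$, so integrating in $\l$ and then taking the supremum over $h$ gives the claim, with constant $1$, for all $0<p\le\infty$. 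This is just ``supremum of the average does not exceed the average of the supremum.''

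The content is the reverse inequality, i.e.\ the interchange
$$
\int_\T\sup_{h>0}\frac{\Vert\D_h^r g_\l\Vert_p^p}{h^{\a p}}\,\mathrm d\l\le C\sup_{h>0}\frac1{h^{\a p}}\int_\T\Vert\D_h^r g_\l\Vert_p^p\,\mathrm d\l .
$$
I would split the inner supremum at $h=1/n$. To exploit the structure, I would use Theorem~A to fix a polynomial $T_n$ of best approximation and write $g_\l=(f-T_n)+(T_n-\mathcal{L}_{n,\l}(f))$, where the $\l$-dependent summand $T_n-\mathcal{L}_{n,\l}(f)$ lies in $\mathcal{T}_n$ by the reproducing identity (\ref{eqlem2pr300}). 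On this trigonometric polynomial Theorem~B collapses the whole range $0<h<1/n$ onto the single scale $h=1/n$, replacing $\sup_{0<h<1/n}h^{-\a}\Vert\D_h^r(T_n-\mathcal{L}_{n,\l}(f))\Vert_p$ by $C\,n^{\a}\Vert\D_{1/n}^r(T_n-\mathcal{L}_{n,\l}(f))\Vert_p$; after integrating in $\l$, the bound $n^{\a}\Vert\D_{1/n}^r g_\l\Vert_{\overline p}\le|g_\l|_{H_{\overline p}^{r,\a}}$ reproduces the averaged seminorm on the right. For the terms in which only $\mathcal{L}_{n,\l}(f)$ appears I would invoke Lemma~\ref{lem2}, and for the coarse range $h\ge1/n$ I would use that $h^{-\a}\le n^{\a}$ together with the monotonicity estimate (\ref{svmod2}) to reduce to comparison scales already present in the averaged quantity.

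The hard part is that the cancellation inside $f-\mathcal{L}_{n,\l}(f)$ must be carried through every scale. The naive quasi-triangle split $\Vert\D_h^r g_\l\Vert_p^{p_1}\le\Vert\D_h^r f\Vert_p^{p_1}+\Vert\D_h^r\mathcal{L}_{n,\l}(f)\Vert_p^{p_1}$ is fatal: it replaces the (possibly small) error seminorm by the $\l$-free quantity $\sup_{h\ge1/n}\w_r(f,h)_p/h^{\a}$, which in general does \emph{not} vanish with the error — taking $f$ smooth and $\a=r$ shows the left-hand side can tend to $0$ while this quantity stays bounded below — so no absorption is possible. The delicate point is therefore the $\l$-independent summand $f-T_n$ at fine scales: it must never be estimated through the modulus of $f$ alone. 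The resolution is that the $\l$-dependence is confined to the polynomial $T_n-\mathcal{L}_{n,\l}(f)$, to which Theorem~B applies, pushing it down to the scale $1/n$ where $\Vert\cdot\Vert_{\overline p}$ is already in place; at any scale $h\ll 1/n$ where the $\l$-free part would dominate, the polynomial correction is of size $O((hn)^r)$ and hence negligible, so there $g_\l\approx f$ is itself essentially independent of $\l$ and the supremum can be reconstructed directly from the averaged seminorm rather than from the modulus of $f$. Getting this bookkeeping to close with an absolute constant (and not a logarithmic loss from discretizing the scales) is the main obstacle.
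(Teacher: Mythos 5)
Your ``trivial direction'' is the same as the paper's and is fine. For the substantive direction, however, your outline has two genuine gaps, and they sit exactly where you yourself admit the ``main obstacle'' is; the paper closes both with devices your plan does not use. First, you take $T_n$ to be a best approximation of $f$ in $L_p$ (Theorem~A). Then the $\l$-free term $f-T_n$ is unavoidable in your decomposition, and after the Theorem~B collapse you are left with $n^{\a}\Vert \D_{1/n}^r(f-T_n)\Vert_p$; with your choice of $T_n$ this quantity can only be estimated through $\w_r(f,1/n)_p$-type bounds (e.g.\ via (\ref{eqA2})), i.e.\ through the modulus of $f$ itself --- precisely the loss you correctly call fatal. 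Your proposed rescue (at fine scales $g_\l$ is essentially $\l$-independent, so the supremum can be reconstructed from the averaged seminorm) is the right intuition --- indeed $g_\l-g_\mu\in\mathcal{T}_n$, so Theorem~B collapses the fine scales of this difference to the scale $1/n$, and averaging over \emph{both} parameters recovers the right-hand side --- but you never carry this out, and as written your decomposition routes everything through $f-T_n$ rather than through $g_\l-g_\mu$ (also, ``$g_\l\approx f$'' should be ``$g_\l\approx f-T_n$''). The paper avoids the problem altogether by choosing $T_n$ to minimize the \emph{seminorm} $|f-T|_{H_p^{r,\a}}$ over $\mathcal{T}_n$: since $\mathcal{L}_{n,\l}(f)\in\mathcal{T}_n$ for every fixed $\l$, this $|f-T_n|_{H_p^{r,\a}}$ is dominated by the error seminorm and can be absorbed at the end of the chain, which is impossible for your $T_n$.

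Second, the coarse range $h\ge 1/n$. Your plan --- ``$h^{-\a}\le n^{\a}$ together with (\ref{svmod2})'' --- does not work: passing from scale $1/n$ up to scale $h$ via (\ref{svmod2}) costs a factor of order $(hn)^{r-1+1/p_1}$, which is unbounded in $n$, while the crude alternative $\Vert\D_h^r g_\l\Vert_p\le C\Vert g_\l\Vert_p$ is useless because the difference operator annihilates the ($\l$-dependent) constant components, so $\Vert g_\l\Vert_{\overline p}$ is controlled by neither side of the lemma. The paper's treatment of this range is structurally different: writing $f-\mathcal{L}_{n,\l}(f)=(f-T_n)+(T_n-\mathcal{L}_{n,\l}(T_n))+\mathcal{L}_{n,\l}(T_n-f)$, the only terms for which the supremum in $h$ must be interchanged with the $\l$-integral are (i) $\mathcal{L}_{n,\l}(T_n-f)$, which Lemma~\ref{lem2} bounds by $\sup_{h\ge1/n}h^{-\a}\w_r(f-T_n,h)_p\le|f-T_n|_{H_p^{r,\a}}$ --- the modulus of the \emph{error}, not of $f$ --- and (ii) $T_n-\mathcal{L}_{n,\l}(T_n)$, which by (\ref{eqlem2pr300}) equals $T_n-\mathcal{L}_{n}(T_n)$ and is $\l$-free, so for it the interchange is an identity, not an estimate. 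This double use of (\ref{eqlem2pr300}) and Lemma~\ref{lem2}, together with the seminorm-minimizing $T_n$, is the engine of the paper's proof; your outline cites (\ref{eqlem2pr300}) only to note that $T_n-\mathcal{L}_{n,\l}(f)\in\mathcal{T}_n$ (which already follows from the form (\ref{eqI14}) of the means) and never exploits the $\l$-independence it actually provides.
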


\begin{proof}
The estimate from above is evident. Let us prove the estimate from below. As in the proof of Lemma~\ref{lem2}, for the convenience, we treat only the case $0<p<\infty$.

Let $T_n\in\mathcal{T}_n$, $n\in\N$, be such that $|f-T_n|_{H_{{p}}^{r,\a}}=\inf_{T\in\mathcal{T}_n}|f-T|_{H_{{p}}^{r,\a}}$. By using two times
Lemma~\ref{lem2} and (\ref{eqlem2pr300}), we obtain
\begin{equation*}
  \begin{split}
     \int_\T &|f-\mathcal{L}_{n,\,\lambda}(f)|_{H_{p}^{r,\a}}^p{\rm d}\l\\
     &\le C \(|f-T_n|_{H_{p}^{r,\a}}^p+\int_\T |T_n-\mathcal{L}_{n,\,\lambda}(T_n)|_{H_{p}^{r,\a}}^p{\rm d}\l+\int_\T |\mathcal{L}_{n,\,\lambda}(T_n-f)|_{H_{p}^{r,\a}}^p{\rm d}\l\)\\
     &\le C \(|f-T_n|_{H_{p}^{r,\a}}^p+ |T_n-\mathcal{L}_{n}(T_n)|_{H_{p}^{r,\a}}^p+\sup_{h\ge 1/n}\frac{\omega_r(f-T_n, h)_p^p}{h^{\a p}}\)\\
     &\le C \(|f-T_n|_{H_{p}^{r,\a}}^p+ \sup_{h>0}\int_\T \frac{\Vert \D_h^r (T_n-\mathcal{L}_{n,\,\lambda}(T_n))\Vert_p^p}{h^{\a p}}{\rm d}\l\)\\
     &\le C \(|f-T_n|_{H_{p}^{r,\a}}^p+|f-\mathcal{L}_{n,\,\lambda}(f)|_{H_{\overline{p}}^{r,\a}}^p+\sup_{h>0}\int_\T \frac{\Vert \D_h^r \mathcal{L}_{n,\,\lambda}(f-T_n)\Vert_p^p}{h^{\a p}}{\rm d}\l\)\\
      &\le C \(|f-T_n|_{H_{p}^{r,\a}}^p+|f-\mathcal{L}_{n,\,\lambda}(f)|_{H_{\overline{p}}^{r,\a}}^p\)\le  C |f-\mathcal{L}_{n,\,\lambda}(f)|_{H_{\overline{p}}^{r,\a}}^p,
   \end{split}
\end{equation*}
which proves the lemma.
\end{proof}

\section{Direct and inverse theorems. Properties of the best approximation in $H_p^{r,\a}$}

Let $0<p\le\infty$, $0<\a\le r$, $r,n\in\N$. Denote the best approximation in $H_p^{r,\a}$ by
$$
E_n(f)_{H_p^{r,\a}}=\inf_{T\in \mathcal{T}_n}\Vert f-T\Vert_{H_p^{r,\a}}
$$
and consider the following modulus of smoothness
$$
\theta_{k,\a}(f,\d)_p=\sup_{0<h\le \d}\frac{\w_k(f,h)_p}{h^\a},
$$
which was initially used for the investigation of approximation in the H\"older spaces (see, e.g.,~\cite{BJ}).

We obtain the following Jackson-type  theorem in terms of $\theta_{k,\a}(f,h)_p$.

\begin{theorem}\label{cor2}
   Let $f\in H_p^{r,\a}$, $0<p\le\infty$, $0<\a<\min(r,k)$ or $0<\a=k=r$, and $r,k,n\in\N$.
Then
\begin{equation}\label{eqR2}
  E_n(f)_{H_p^{r,\a}}\le C \theta_{k,\a}\(f,\frac1n\)_p,
\end{equation}
where $C$ is a constant independent of $n$ and $f$.
\end{theorem}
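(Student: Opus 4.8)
The plan is to take $T_n\in\mathcal{T}_n$ to be a polynomial of best approximation of $f$ in $L_p$, so that $\Vert f-T_n\Vert_p=E_n(f)_p$, and to estimate both the $L_p$-part and the seminorm part of $\Vert f-T_n\Vert_{H_p^{r,\a}}$ against $\theta_{k,\a}(f,1/n)_p$. A preliminary reduction handles the gap between the orders $r$ and $k$: in the case $0<\a<\min(r,k)$, Lemma~\ref{lemHolder} (applied to the pair $(\min(r,k),\max(r,k))$, both exceeding $\a$) shows $\Vert g\Vert_{H_p^{r,\a}}\asymp\Vert g\Vert_{H_p^{k,\a}}$ for every $g$, whence $E_n(f)_{H_p^{r,\a}}\asymp E_n(f)_{H_p^{k,\a}}$ and it suffices to bound $\Vert f-T_n\Vert_p+|f-T_n|_{H_p^{k,\a}}$. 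In the borderline case $\a=k=r$ no reduction is needed. Thus in all cases I may work with the $k$-th order difference throughout, which is exactly what lets $\theta_{k,\a}$ rather than $\theta_{r,\a}$ appear on the right.

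For the $L_p$-part, Theorem~A gives $\Vert f-T_n\Vert_p=E_n(f)_p\le C\omega_k(f,1/n)_p$, and since $\theta_{k,\a}(f,1/n)_p\ge n^\a\omega_k(f,1/n)_p$ and $n^{-\a}\le 1$, this is $\le C\theta_{k,\a}(f,1/n)_p$. For the seminorm $|f-T_n|_{H_p^{k,\a}}=\sup_{h>0}h^{-\a}\Vert\D_h^k(f-T_n)\Vert_p$ I split the supremum at $h=1/n$. For $0<h\le 1/n$ I use the $p_1$-quasitriangle inequality $\Vert\D_h^k(f-T_n)\Vert_p^{p_1}\le\Vert\D_h^k f\Vert_p^{p_1}+\Vert\D_h^k T_n\Vert_p^{p_1}$. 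The first term satisfies $\Vert\D_h^k f\Vert_p\le\omega_k(f,h)_p\le h^\a\theta_{k,\a}(f,1/n)_p$ directly from the definition of $\theta_{k,\a}$. For the second term I invoke Theorem~B: from~(\ref{eqA1}) with $h$ and with $1/n$, combined with~(\ref{eqA2}), I obtain $\Vert\D_h^k T_n\Vert_p\le C(hn)^k\omega_k(f,1/n)_p$; dividing by $h^\a$ and using $h^{k-\a}\le n^{\a-k}$ (valid since $h\le 1/n$ and $k\ge\a$) gives $h^{-\a}\Vert\D_h^k T_n\Vert_p\le Cn^\a\omega_k(f,1/n)_p\le C\theta_{k,\a}(f,1/n)_p$.

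For $h>1/n$ I do not split the difference: by~(\ref{svmod1}) the modulus is bounded by the norm, so $\Vert\D_h^k(f-T_n)\Vert_p\le\omega_k(f-T_n,h)_p\le 2^{k/p_1}\Vert f-T_n\Vert_p=2^{k/p_1}E_n(f)_p\le C\omega_k(f,1/n)_p$ by Theorem~A. Since $h^{-\a}<n^\a$ for $h>1/n$, this yields $h^{-\a}\Vert\D_h^k(f-T_n)\Vert_p\le Cn^\a\omega_k(f,1/n)_p\le C\theta_{k,\a}(f,1/n)_p$. Combining the three estimates together with the reduction of the first paragraph completes the proof.

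I expect the main obstacle to be conceptual rather than computational: recognizing that Lemma~\ref{lemHolder} permits aligning the smoothness order of the seminorm with the order $k$ of the modulus on the right-hand side, which is the precise mechanism producing the sharper $\theta_{k,\a}$ with $k$ independent of $r$. The remaining care concerns uniformity of constants for $0<p<1$, handled throughout by replacing the triangle inequality with its $p_1$-version and by using Theorem~B and Theorem~A in the full range $0<p\le\infty$; the borderline case $\a=k=r$ requires only checking that the exponent $k-\a=0$ still makes the estimate for $0<h\le 1/n$ go through, which it does.
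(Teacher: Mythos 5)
Your proof is correct and follows essentially the same route as the paper's: the same choice of $T_n$ as a best $L_p$-approximant, the same reduction via Lemma~\ref{lemHolder} to the seminorm $|f-T_n|_{H_p^{k,\a}}$, and the same splitting of the supremum at $h=1/n$ with Theorem~A and Theorem~B handling the two ranges. The only cosmetic difference is that for $0<h\le 1/n$ you bound $\Vert\D_h^k T_n\Vert_p$ by combining (\ref{eqA1}) at both scales with (\ref{eqA2}), whereas the paper uses (\ref{eqA1}) once and then the quasi-triangle inequality together with Theorem~A; both are equivalent uses of the same tools.
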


\begin{proof}
Let $\a<\min(r,k)$ and $T_n$, $n\in \N$, be polynomials of the best approximation of $f$ in $L_p$.
By Lemma~\ref{lemHolder} and Theorem~A it suffices to find an estimation for $|f-T_n|_{H_p^{k,\a}}$.

We have
\begin{equation}\label{eqcor2.1}
  |f-T_n|_{H_p^{k,\a}}\le \(\sup_{0<h<1/n}+\sup_{h\ge 1/n}\)\frac{\Vert \D_h^k (f-T_n)\Vert_p}{h^\a}=S_1+S_2.
\end{equation}
Using Theorem~A we obtain
\begin{equation}\label{eqcor2.2}
  S_2\le Cn^\a \Vert f-T_n\Vert_p\le C n^\a\w_k(f,1/n)_p\le C\theta_{k,\a}(f,1/n)_p.
\end{equation}
Moreover, for $S_1$ we estimate
\begin{equation}\label{eqcor2.3}
 \begin{split}
   S_1&\le C\(\sup_{0<h<1/n}\frac{\Vert \D_h^k f\Vert_p}{h^\a}+\sup_{0<h<1/n}\frac{\Vert \D_h^k T_n\Vert_p}{h^\a}\)\\
   &\le C\(\theta_{k,\a}(f,1/n)_p+\sup_{0<h<1/n}\frac{\Vert \D_h^k T_n\Vert_p}{h^\a}\).
 \end{split}
\end{equation}
To estimate the last term in (\ref{eqcor2.3}) we use Theorem~B and Theorem~A
\begin{equation}\label{eqcor2.4}
\begin{split}
  \sup_{0<h<1/n}\frac{\Vert \D_h^k T_n\Vert_p}{h^\a}&\le C n^\a \Vert \D_{1/n}^k T_n\Vert_p\le C n^\a(\Vert f-T_n\Vert_p+\Vert \D_{1/n}^k f\Vert_p)\\
&\le C n^\a \w_k(f,1/n)_p\le C \theta_{k,\a}(f,1/n)_p.
\end{split}
\end{equation}

Thus, combining (\ref{eqcor2.1})--(\ref{eqcor2.4}), we obtain (\ref{eqR2}) in the case $\a<\min(r,k)$. The same scheme one can use in the case  $r=\a=k$.
\end{proof}

The converse result is given by the following theorem.

\begin{theorem}\label{thR4}
  Let $f\in H_p^{r,\a}$, $0<p\le\infty$, $0<\a<\min(r,k)$  or $0<\a=k=r$, and $r,k,n\in\N$.
Then
\begin{equation}\label{eqR3}
  \theta_{k,\a}\(f,\frac 1n\)_p\le \frac{C}{n^{k-\a}}\(\sum_{\nu=0}^{n} (\nu+1)^{(k-\a) {p_1}-1} E_\nu(f)_{H_p^{r,\a}}^{p_1}\)^\frac1{p_1},
\end{equation}
where $C$ is a constant independent of $n$ and $f$.
\end{theorem}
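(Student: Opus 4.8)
The plan is to transplant the classical Stechkin dyadic argument into the H\"older setting, using a family of near-best approximants in $H_p^{r,\a}$ together with Lemma~\ref{lemHolder} to pass freely between the seminorms of $H_p^{r,\a}$ and $H_p^{k,\a}$. This switch is legitimate in both admissible cases: when $\a<\min(r,k)$ the lemma applies (with $k$ and $r$ in whichever order), and when $\a=k=r$ the two spaces simply coincide. For each $\nu$ I would fix $Q_\nu\in\mathcal{T}_\nu$ with $\Vert f-Q_\nu\Vert_{H_p^{r,\a}}\le 2E_\nu(f)_{H_p^{r,\a}}$, choose $M$ with $2^M\le n<2^{M+1}$, and write $f=(f-Q_{2^M})+Q_{2^M}$. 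Using the $p_1$-subadditivity $\w_k(g_1+g_2,h)_p^{p_1}\le\w_k(g_1,h)_p^{p_1}+\w_k(g_2,h)_p^{p_1}$, the problem reduces to bounding, \emph{uniformly in} $0<h\le 1/n$, the two quotients $\w_k(f-Q_{2^M},h)_p/h^\a$ and $\w_k(Q_{2^M},h)_p/h^\a$; the supremum over $h$ defining $\theta_{k,\a}(f,1/n)_p$ is then taken only at the end.

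The high-frequency remainder is immediate and uniform in $h$, which is the whole point of working with the seminorm: $\w_k(f-Q_{2^M},h)_p/h^\a\le\sup_{t>0}\w_k(f-Q_{2^M},t)_p/t^\a=|f-Q_{2^M}|_{H_p^{k,\a}}\le C|f-Q_{2^M}|_{H_p^{r,\a}}\le CE_{2^M}(f)_{H_p^{r,\a}}$. Since $E_\nu$ is nonincreasing, a comparison of the single term $E_{2^M}$ with the weighted sum shows $E_{2^M}(f)_{H_p^{r,\a}}\le Cn^{-(k-\a)}\big(\sum_{\nu=0}^n(\nu+1)^{(k-\a)p_1-1}E_\nu(f)_{H_p^{r,\a}}^{p_1}\big)^{1/p_1}$, namely the target bound (one estimates $\sum_{\nu=0}^{2^M}(\nu+1)^{(k-\a)p_1-1}$ from below, which is $\asymp n^{(k-\a)p_1}$ when $k>\a$ and $\ge 1$ in the boundary case $k=\a$).

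For the polynomial part I would telescope $Q_{2^M}=Q_1+\sum_{j=1}^M\eta_j$, where $\eta_j=Q_{2^j}-Q_{2^{j-1}}$ has degree at most $2^j$ and, by the $p_1$-triangle inequality and monotonicity of $E_\nu$, satisfies $\Vert\eta_j\Vert_{H_p^{r,\a}}\le CE_{2^{j-1}}(f)_{H_p^{r,\a}}$. From $\w_k(Q_{2^M},h)_p^{p_1}\le\sum_{j=0}^M\w_k(\eta_j,h)_p^{p_1}$ the crux is the per-block estimate, valid for $h\le 1/n\le 1/2^j$. Theorem~B gives $\w_k(\eta_j,h)_p\asymp h^k\Vert\eta_j^{(k)}\Vert_p$, while the same theorem evaluated at the single scale $t\sim 2^{-j}\le\pi/2^j$ yields $|\eta_j|_{H_p^{k,\a}}\ge c\,2^{-j(k-\a)}\Vert\eta_j^{(k)}\Vert_p$, i.e. the Bernstein-type inequality $\Vert\eta_j^{(k)}\Vert_p\le C\,2^{j(k-\a)}|\eta_j|_{H_p^{k,\a}}$. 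Combining these and using $h^{k-\a}\le n^{-(k-\a)}$ (here $k-\a\ge 0$) gives, uniformly in $h\le 1/n$, the estimate $\w_k(\eta_j,h)_p/h^\a\le C(2^j/n)^{k-\a}|\eta_j|_{H_p^{k,\a}}\le C(2^j/n)^{k-\a}E_{2^{j-1}}(f)_{H_p^{r,\a}}$.

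Summing the $p_1$-th powers over $j$ and converting the dyadic sum $\sum_{j=0}^M 2^{j(k-\a)p_1}E_{2^{j-1}}^{p_1}$ into the full weighted sum $\sum_{\nu=0}^n(\nu+1)^{(k-\a)p_1-1}E_\nu^{p_1}$ — by the usual dyadic-block comparison together with the monotonicity of $E_\nu$, which works both when $(k-\a)p_1>0$ and in the boundary case $k=\a$ — then closes the estimate and, after taking the supremum over $h$, produces~\eqref{eqR3}. I expect the main obstacle to be the per-block Bernstein-type inequality linking $\Vert\eta_j^{(k)}\Vert_p$ to $|\eta_j|_{H_p^{k,\a}}$: it must be read off from the two-sided Theorem~B at the correct scale, and every quasi-norm manipulation has to be carried out with the exponent $p_1$ in order to stay valid for $0<p<1$. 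The case $\a=k=r$, where Lemma~\ref{lemHolder} is unavailable, presents no extra difficulty since $H_p^{k,\a}$ and $H_p^{r,\a}$ are then the same space, so the seminorm switch is an identity.
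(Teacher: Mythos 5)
Your proposal is correct and follows essentially the same route as the paper's proof: split $f$ into a (near-)best H\"older approximant plus remainder, bound the remainder quotient uniformly in $h$ by the equivalent $H_p^{k,\a}$ norm via Lemma~\ref{lemHolder}, and handle the polynomial part by a dyadic telescope in which each block is estimated through the two-sided Stechkin--Nikolskii inequality (Theorem~B) used at the two scales $h$ and $2^{-j}$, exactly as in (\ref{eq.thR4.3})--(\ref{eq.thR4.3000}) (you merely telescope the modulus before applying Theorem~B, while the paper first passes to $n^{-(k-\a)}\Vert T^{(k)}\Vert_p$ and telescopes the derivative). The only cosmetic repairs needed are to start the telescope at a constant $Q_0$, so that the $j=0$ block is $Q_1-Q_0$ (mirroring the paper's $T_1-T_0$; as written your $\eta_0=Q_1$ has no bound by $E_0(f)_{H_p^{r,\a}}$), and to route the Lemma~\ref{lemHolder} comparisons through the full norms $\Vert\cdot\Vert_{H_p^{r,\a}}$ rather than bare seminorms, both of which are immediate since $\Vert f-Q_{2^M}\Vert_{H_p^{r,\a}}\le 2E_{2^M}(f)_{H_p^{r,\a}}$.
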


\begin{proof}
As above we consider only the case $\a<\min(r,k)$.
Let $T_n\in \mathcal{T}_{n}$, $n\in \N$, be  polynomials of the best approximation of $f$ in $H_{p}^{r,\a}$.
Choosing $m\in \N\cup\{0\}$ such that $2^m\le n<2^{m+1}$, we get
\begin{equation}\label{eq.thR4.1}
  \t_{k,\a}(f,1/n)_{p}^{p_1}\le \t_{k,\a}(f-T_{2^{m+1}},1/n)_{p}^{p_1}+\t_{k,\a}(T_{2^{m+1}},1/n)_{p}^{p_1}.
\end{equation}
By the definition of the H\"older spaces and Lemma~\ref{lemHolder}, we obtain
\begin{equation}\label{eq.thR4.2}
 \t_{k,\a}(f-T_{2^{m+1}},1/n)_{p}\le |f-T_{2^{m+1}}|_{H_{p}^{k,\a}}\le E_{2^{m+1}}(f)_{H_{p}^{k,\a}}\le CE_{2^{m+1}}(f)_{H_{p}^{r,\a}}.
\end{equation}
By (\ref{eqA1}) in Theorem~B, we get
\begin{equation}\label{eq.thR4.3}
  \begin{split}
    \t_{k,\a}(T_{2^{m+1}},1/n)_{p}\le C n^{-(k-\a)}\Vert  T_{2^{m+1}}^{(k)}\Vert_{p}.
  \end{split}
\end{equation}
Using again inequality (\ref{eqA1}) and  Lemma~\ref{lemHolder}, we gain
\begin{equation}\label{eq.thR4.3000}
  \begin{split}
    \Vert  T_{2^{m+1}}^{(k)}\Vert_{p}^{p_1}
    &\le \Vert  (T_1- T_0)^{(k)}\Vert_{p}^{p_1}+\sum_{\mu=0}^m \Vert (T_{2^{\mu+1}}- T_{2^\mu})^{(k)}\Vert_{p}^{p_1}\\
    &\le C\(\w_k(T_1-T_0,1)_{p}^{p_1}+\sum_{\mu=0}^m 2^{\mu k p_1}\w_k\(T_{2^{\mu+1}}-T_{2^\mu},2^{-(\mu+1)}\)_{p}^{p_1}\)\\
    &\le C\( \Vert T_1-T_0\Vert_{H_{p}^{k,\a}}^{p_1} +\sum_{\mu=0}^m 2^{\mu (k-\a)p_1}  \Vert T_{2^{\mu+1}}-T_{2^\mu}  \Vert_{H_{p}^{k,\a}}^{p_1}  \)  \\
    &\le C\(E_0(f)_{H_{p}^{k,\a}}^{p_1}+\sum_{\mu=0}^m 2^{\mu(k-\a) p_1}E_{2^\mu}(f)_{H_{p}^{k,\a}}^{p_1} \)\\
    &\le C\(\sum_{\nu=0}^{n} (\nu+1)^{(k-\a) {p_1}-1} E_\nu(f)_{H_p^{r,\a}}^{p_1}\).
  \end{split}
\end{equation}
Thus, combining (\ref{eq.thR4.1})--(\ref{eq.thR4.3000}),
we get (\ref{eqR3}).

\end{proof}


Now let us consider the problem on the precise order of decrease of the best approximation in $H_p^{r,\a}$.

\begin{theorem}\label{th3KP}
                     Let $f\in H_p^{r,\a}$, $0<p\le \infty$, $0<\a< r$, $\a\le s$, and $r,s\in\N$.
There exists a constant $L>0$ such that for any $n\in\mathbb{N}$
                     \begin{equation}\label{rathore1P}
                           \t_{s,\a}\bigg(f,\frac 1n\bigg)_{p}\le LE_{n}(f)_{H_p^{r,\a}}
                     \end{equation}
                    iff for  some $k>s+1/{p_1}-1$ there exists a constant
                      $M>0$ such that for any $h\in(0,1]$
                     \begin{equation}\label{rathore2P}
                            \t_{s,\a}(f,h)_{p}\le M \t_{k,\a}(f,h)_{p}.
                     \end{equation}
\end{theorem}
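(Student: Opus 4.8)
This is an equivalence, so I would prove the two implications (\ref{rathore1P})$\Rightarrow$(\ref{rathore2P}) and (\ref{rathore2P})$\Rightarrow$(\ref{rathore1P}) separately, the first being the easier one. Throughout I fix $k>s+1/p_1-1$ and note that, since $1/p_1-1\ge 0$, we have $k>s$, hence $\alpha\le s<k$ and $\alpha<r$; thus $\alpha<\min(r,k)$ and both the Jackson-type estimate of Theorem~\ref{cor2} and the inverse estimate of Theorem~\ref{thR4} are available for the index $k$. Routing every argument through the order $k$ rather than through $s$ then covers the boundary case $\alpha=s$ automatically.

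For (\ref{rathore1P})$\Rightarrow$(\ref{rathore2P}) I would combine the hypothesis with Theorem~\ref{cor2}, which gives $E_n(f)_{H_p^{r,\alpha}}\le C\theta_{k,\alpha}(f,1/n)_p$. Together with (\ref{rathore1P}) this yields
\[
\theta_{s,\alpha}(f,1/n)_p\le L\,E_n(f)_{H_p^{r,\alpha}}\le LC\,\theta_{k,\alpha}(f,1/n)_p,
\]
that is, (\ref{rathore2P}) at the points $h=1/n$. To fill in the remaining $h\in(0,1]$ I use only two elementary facts about $\theta_{k,\alpha}(f,\cdot)_p$: it is non-decreasing in its second argument, and by (\ref{svmod2}) it doubles, $\theta_{k,\alpha}(f,2h)_p\le C'\theta_{k,\alpha}(f,h)_p$. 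Choosing $n$ with $1/(n+1)<h\le 1/n$, monotonicity gives $\theta_{s,\alpha}(f,h)_p\le\theta_{s,\alpha}(f,1/n)_p$, while $1/n\le 2h$ together with doubling gives $\theta_{k,\alpha}(f,1/n)_p\le\theta_{k,\alpha}(f,2h)_p\le C'\theta_{k,\alpha}(f,h)_p$; chaining the three inequalities produces (\ref{rathore2P}) with $M=LCC'$.

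For the converse (\ref{rathore2P})$\Rightarrow$(\ref{rathore1P}), the plan is to use (\ref{rathore2P}) to replace $\theta_{s,\alpha}$ by $\theta_{k,\alpha}$ and then apply Theorem~\ref{thR4} of order $k$:
\[
\theta_{s,\alpha}(f,1/n)_p\le M\,\theta_{k,\alpha}(f,1/n)_p\le\frac{CM}{n^{k-\alpha}}\Big(\sum_{\nu=0}^{n}(\nu+1)^{(k-\alpha)p_1-1}E_\nu(f)_{H_p^{r,\alpha}}^{p_1}\Big)^{1/p_1}.
\]
Everything then reduces to showing that this weighted sum is dominated by its diagonal term, $\sum_{\nu=0}^{n}(\nu+1)^{(k-\alpha)p_1-1}E_\nu(f)_{H_p^{r,\alpha}}^{p_1}\le C\,n^{(k-\alpha)p_1}E_n(f)_{H_p^{r,\alpha}}^{p_1}$, after which (\ref{rathore1P}) follows with $L=CM$. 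The input that powers this is a growth bound for the order-$k$ modulus carrying the smaller, order-$s$ rate: combining (\ref{svmod1}), the elementary bound $\theta_{s,\alpha}(f,t)_p\le C(t/\tau)^{s-1+1/p_1-\alpha}\theta_{s,\alpha}(f,\tau)_p$ coming from (\ref{svmod2}), and finally (\ref{rathore2P}) itself, one obtains
\[
\theta_{k,\alpha}(f,t)_p\le C\Big(\frac{t}{\tau}\Big)^{\beta}\theta_{k,\alpha}(f,\tau)_p,\qquad 0<\tau\le t\le 1,\quad \beta:=s-1+\frac{1}{p_1}-\alpha.
\]
Here the hypothesis $k>s+1/p_1-1$ enters decisively: it is exactly the statement $\beta<k-\alpha$, equivalently $(k-s+1)p_1>1$, so that, granted the decay (reverse-doubling) estimate $E_\nu(f)_{H_p^{r,\alpha}}\le C(n/\nu)^{\beta}E_n(f)_{H_p^{r,\alpha}}$ for $\nu\le n$, the sum is of convergent power type and collapses to its diagonal contribution $C\,n^{(k-\alpha)p_1}E_n(f)_{H_p^{r,\alpha}}^{p_1}$.

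The main obstacle is precisely the proof of this last decay estimate for $\{E_\nu(f)_{H_p^{r,\alpha}}\}$. The difficulty is that the naive route — bounding $E_\nu$ by $\theta_{k,\alpha}(f,1/\nu)_p$ through Theorem~\ref{cor2} and then reinserting into Theorem~\ref{thR4} — is circular, merely returning $\theta_{k,\alpha}(f,1/n)_p\le C\theta_{k,\alpha}(f,1/n)_p$. I would therefore establish the decay by a separate, non-circular argument: comparing the best approximant $T_n$ at level $n$ with the best approximants at levels $\nu\le n$, controlling $\Vert T_n^{(k)}\Vert_p$ by Theorem~B and by the telescoping already used in the proof of Theorem~\ref{thR4}, and transferring to $\{E_\nu\}$ the order-$s$ regularity encoded in (\ref{rathore2P}). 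Once this decay estimate is in hand the geometric summation above is routine, and the case $p=\infty$ follows with the usual modification of the $p$-norms by suprema.
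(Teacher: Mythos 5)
Your first implication ((\ref{rathore1P})$\Rightarrow$(\ref{rathore2P})) is correct and coincides with the paper's argument: Theorem~\ref{cor2} with index $k$ at the points $h=1/n$, then monotonicity plus the doubling property from (\ref{svmod2}). Your growth inequality $\theta_{k,\alpha}(f,t)_p\le CM(t/\tau)^{\beta}\theta_{k,\alpha}(f,\tau)_p$ with $\beta=s-1+1/p_1-\alpha$ is also exactly the paper's inequality (\ref{rathoreqiP}), and your bookkeeping that $k>s+1/p_1-1$ is the same as $\beta<k-\alpha$ is right. The gap is in the converse implication: your entire argument is conditioned on the reverse-doubling estimate $E_\nu(f)_{H_p^{r,\alpha}}\le C(n/\nu)^{\beta}E_n(f)_{H_p^{r,\alpha}}$ for $\nu\le n$, and this estimate is never proved. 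It cannot be obtained by the route you sketch, because it is logically equivalent to the conclusion you are trying to reach: by Theorem~\ref{cor2} and (\ref{rathoreqiP}),
\begin{equation*}
E_\nu(f)_{H_p^{r,\alpha}}\le C\,\theta_{k,\alpha}\Big(f,\frac{1}{\nu+1}\Big)_p\le C\Big(\frac{n}{\nu+1}\Big)^{\beta}\theta_{k,\alpha}\Big(f,\frac{1}{n}\Big)_p,
\end{equation*}
so your decay estimate holds if and only if $\theta_{k,\alpha}(f,1/n)_p\le C E_n(f)_{H_p^{r,\alpha}}$ --- which is precisely (\ref{rathore1P}) (for the order $k$, hence for $s$). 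Assuming it begs the question, and the ``separate, non-circular argument'' you outline (comparing best approximants, controlling $\Vert T_n^{(k)}\Vert_p$ via Theorem~B and telescoping) can only yield \emph{upper} bounds for derivatives and moduli in terms of sums of $E_\nu$'s; what the decay estimate requires is a \emph{lower} bound for $E_n$, which no telescoping provides.

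The missing idea is the two-scale (Rathore-type) argument of the paper. One first proves (\ref{rathInvP}):
\begin{equation*}
\frac{1}{n^{(k-\alpha)p_1}}\sum_{\nu=0}^{n}(\nu+1)^{(k-\alpha)p_1-1}E_\nu(f)_{H_p^{r,\alpha}}^{p_1}\le CM^{p_1}\,\theta_{k,\alpha}\Big(f,\frac1n\Big)_p^{p_1},
\end{equation*}
i.e.\ the weighted sum is dominated by the \emph{modulus} at scale $1/n$ (not by $E_n$ --- this is what keeps the step non-circular), using exactly the chain displayed above plus the convergence of $\sum(\nu+1)^{(k-s)p_1+p_1-2}$, which is where $k>s+1/p_1-1$ enters. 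Then one applies Theorem~\ref{thR4} at the \emph{larger} scale $mn$, splits the sum at $\nu=n$, estimates the lower part by (\ref{rathInvP}), the upper part by monotonicity $E_\nu\le E_n$ for $\nu>n$, and bounds $\theta_{k,\alpha}(f,1/(mn))_p$ from below through (\ref{rathoreqiP}). This produces
\begin{equation*}
E_n(f)_{H_p^{r,\alpha}}^{p_1}\sum_{\nu=n+1}^{mn}(\nu+1)^{(k-\alpha)p_1-1}\ge \big(c\,m^{(k-s)p_1+p_1-1}-M^{p_1}\big)\,n^{(k-\alpha)p_1}\,\theta_{k,\alpha}\Big(f,\frac1n\Big)_p^{p_1},
\end{equation*}
and since $(k-s)p_1+p_1-1>0$ one can fix $m=m(k,s,p,\alpha,M)$ so large that the bracket is positive, giving $E_n(f)_{H_p^{r,\alpha}}\ge c\,\theta_{k,\alpha}(f,1/n)_p$ and hence, via (\ref{rathore2P}), the desired (\ref{rathore1P}). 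In short: your reduction is an equivalent reformulation of the theorem, not a proof of it; the two-scale comparison is the step your proposal lacks.
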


\begin{remark}
  Note that the constants $M$ and $L$ in Theorem~\ref{th3KP} may depend on $f$.
\end{remark}

\begin{proof}
To prove this theorem one can use the scheme of the proof of the corresponding result from
\cite{R} in the case $1\le p\le \infty$ and from~\cite{K1} for $0<p<1$.
To do this one only needs to apply Theorem~\ref{corth3}, Theorem~\ref{thR4}, and the following two properties
of $\t_{k,\a}(f,\d)_p$, which can be easily obtained from (\ref{svmod1}) and (\ref{svmod2}),
\begin{equation}\label{svmod1T}
  \t_{k,\a}(f,\d)_p\le 2^\frac{k-r}{p_1}\t_{r,\a}(f,\d)_p,
\end{equation}
\begin{equation}\label{svmod2T}
  \t_{k,\a}(f,\l \d)_p\le k^{\frac1{p_1}-1}(\l+1)^{k-\a+\frac1{p_1}-1}\t_{k,\a}(f,\d)_p.
\end{equation}

Let us  present the proof only for the case $0<p<1$.

Let condition~(\ref{rathore2P}) be satisfied. Then from  (\ref{svmod1T}) and (\ref{svmod2T}) we get
\begin{equation}\label{rathoreqiP}
      \t_{k,\a}(f,\l h)_{p}\le 2^{\frac{k-s}p} s^{\frac1{p}-1} M (\l+1)^{s-\a-1+\frac 1p}\t_{k,\a}(f,h)_{p}
\end{equation}
for all $\l>0$ and $h\in(0,1]$.

Let us prove that
\begin{equation}\label{rathInvP}
       \frac{1}{n^{(k-\a)p}}\sum_{\nu=0}^n(\nu+1)^{(k-\a)p-1}E_\nu(f)_{H_p^{r,\a}}^p\le
       C M^p \t_{k,\a}\bigg(f,\frac 1n\bigg)_{p}^p,
\end{equation}
where $C$ is some positive constant independent of $f$ and $n$. Indeed, by using Theorem~\ref{cor2} and inequality (\ref{rathoreqiP}),
we obtain
\begin{equation*}
\begin{split}
      &\frac{1}{n^{(k-\a)p}}\sum_{\nu=0}^n(\nu+1)^{(k-\a)p-1}E_\nu(f)_{H_p^{r,\a}}^p\le
      \frac{C}{n^{(k-\a)p}}\sum_{\nu=0}^n(\nu+1)^{(k-\a)p-1} \t_{k,\a}\left(f,\frac
      {1}{\nu+1}\right)_{p}^p\\
      &\le\frac{CM^p}{n^{(k-s)p+p-1}} \t_{k,\a}\left(f,\frac 1n
      \right)^p_{p}\sum_{\nu=0}^n(\nu+1)^{(k-s)p+p-2}\le
      CM^p \t_{k,\a}\left(f,\frac 1n\right)_{p}^p.
\end{split}
\end{equation*}
Next, by using Theorem~\ref{thR4} and (\ref{rathInvP}), we get that for all $m, n\in\N$
\begin{equation}\label{eqRathorXXXP}
\begin{split}
       &\t_{k,\a}\(f,\frac{1}{mn} \)_{p}^p\le
       \frac{C}{(mn)^{(k-\a)p}}\sum_{\nu=0}^{mn}(\nu+1)^{(k-\a)p-1}E_{\nu}(f)_{H_p^{r,\a}}^p\\
       &=\frac{C}{(mn)^{(k-\a)p}}\(\sum_{\nu=n+1}^{mn}(\nu+1)^{(k-\a)p-1}E_{\nu}(f)_{H_p^{r,\a}}^p+
      \sum_{\nu=0}^n(\nu+1)^{(k-\a)p-1}E_{\nu}(f)_{H_p^{r,\a}}^p \)\\
       &\le  C\(\frac{1}{(mn)^{(k-\a)p}}\sum_{\nu=n+1}^{mn}
      (\nu+1)^{(k-\a)p-1}E_{\nu}(f)_{H_p^{r,\a}}^p+\frac{M^p}{m^{(k-\a)p}}
      \t_{k,\a}\left(f,\frac 1n\right)_{p}^p\).
\end{split}
\end{equation}
Inequality~(\ref{eqRathorXXXP}) implies that
\begin{equation*}
      \sum_{\nu=n+1}^{mn}(\nu+1)^{(k-\a)p-1}E_{\nu}(f)_{H_p^{r,\a}}^p\ge
      \frac{(mn)^{(k-\a)p}}{C}\t_{k,\a}\left(f,\frac{1}{mn}\right)_{p}^p-M^p n^{(k-\a)p}\t_{k,\a}\left(f,\frac
     1n\right)_{p}^p
\end{equation*}
and, by using the monotonicity of $E_n(f)_{H_p^{r,\a}}$ and
(\ref{rathoreqiP}), we derive
\begin{equation*}
      E_n(f)_{H_p^{r,\a}}^p\sum_{\nu=n+1}^{mn}(\nu+1)^{(k-\a)p-1}\ge (C m^{(k-s)p+p-1}-M^p)n^{(k-\a)p}
      \t_{k,\a}\left(f,\frac{1}{n} \right)_{p}^p.
\end{equation*}
Thus,  choosing $m$ appropriately we can find a positive constant  $C=C_{k,s,p,\a,M}$ such that
\begin{equation*}
      E_n(f)_{H_p^{r,\a}}^p\ge C \t_{k,\a}\left(f,\frac 1n\right)_{p}^p.
\end{equation*}
From the last inequality and (\ref{rathore2P}) we obtain (\ref{rathore1P}).

The reverse direction is an immediate consequence of Theorem~\ref{cor2}, which finishes the proof.
\end{proof}


Now let us establish connection between the errors of approximation in the spaces $H_p^{r,\a}$ and $L_p$.

\begin{lemma}\label{lem4}
 Let $f\in H_p^{r,\a}$, $0<p\le \infty$, $0<\a\le r$, and $r,n\in \N$. Then
\begin{equation}\label{eqlem4pr1}
  C_1 n^\a E_n(f)_p \le  E_n(f)_{H_p^{r,\a}}\le C_2\(n^\a E_n(f)_p+
\(\sum_{\nu=n}^\infty \nu^{\a {p_1}-1}E_\nu(f)_p^{p_1}\)^\frac1{p_1}\),
\end{equation}
where $C_1$ and $C_2$ are some positive constants independent of $n$ and $f$.
\end{lemma}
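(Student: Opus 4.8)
The plan is to prove the two inequalities separately, treating everything through the $p_1$-(quasi-)subadditivity so that the cases $0<p<1$ and $p\ge1$ are handled simultaneously. The lower estimate is quick. For any $T\in\mathcal{T}_n$ we have $E_n(f)_p=E_n(f-T)_p$, since subtracting a polynomial of order $\le n$ does not change the best approximation. Because $\Vert\D_\d^r(f-T)\Vert_p\le\d^\a|f-T|_{H_p^{r,\a}}$ for every $\d>0$ by the definition of the seminorm, we get $\w_r(f-T,\frac1n)_p\le n^{-\a}|f-T|_{H_p^{r,\a}}$, and Theorem~A yields
\[
E_n(f)_p=E_n(f-T)_p\le C\,\w_r\Big(f-T,\frac1n\Big)_p\le Cn^{-\a}|f-T|_{H_p^{r,\a}}\le Cn^{-\a}\Vert f-T\Vert_{H_p^{r,\a}}.
\]
Taking the infimum over $T\in\mathcal{T}_n$ gives $C_1n^\a E_n(f)_p\le E_n(f)_{H_p^{r,\a}}$.

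For the upper estimate I would take $T_N$ to be a polynomial of best $L_p$-approximation of $f$, so $\Vert f-T_N\Vert_p=E_N(f)_p$, and estimate $\Vert f-T_n\Vert_{H_p^{r,\a}}$. The $L_p$-part is already controlled, since $\Vert f-T_n\Vert_p=E_n(f)_p\le n^\a E_n(f)_p$. It remains to bound $|f-T_n|_{H_p^{r,\a}}=\sup_{h>0}h^{-\a}\Vert\D_h^r(f-T_n)\Vert_p$, which I split at $h=1/n$. For $h\ge1/n$, inequality (\ref{svmod1}) gives $\Vert\D_h^r(f-T_n)\Vert_p\le2^{r/p_1}\Vert f-T_n\Vert_p$, hence $h^{-\a}\Vert\D_h^r(f-T_n)\Vert_p\le2^{r/p_1}n^\a E_n(f)_p$, which is absorbed by the first term of (\ref{eqlem4pr1}).

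The case $0<h<1/n$ is the heart of the matter. Since $E_N(f)_p\to0$, I write the telescoping expansion $f-T_n=\sum_{j\ge0}(T_{2^{j+1}n}-T_{2^jn})$, convergent in $L_p$, and set $V_j=T_{2^{j+1}n}-T_{2^jn}\in\mathcal{T}_{2^{j+1}n}$. The $p_1$-subadditivity of $\Vert\cdot\Vert_p^{p_1}$ together with $\Vert V_j\Vert_p^{p_1}\le E_{2^{j+1}n}(f)_p^{p_1}+E_{2^jn}(f)_p^{p_1}\le2E_{2^jn}(f)_p^{p_1}$ yields $\Vert\D_h^r(f-T_n)\Vert_p^{p_1}\le C\sum_{j\ge0}\Vert\D_h^rV_j\Vert_p^{p_1}$. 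For each block I would use two estimates: the trivial bound $\Vert\D_h^rV_j\Vert_p\le2^{r/p_1}\Vert V_j\Vert_p$, and a Bernstein-type bound $\Vert\D_h^rV_j\Vert_p\le C(h\,2^jn)^r\Vert V_j\Vert_p$ valid for $h\le\pi/(2^{j+1}n)$. The latter comes from both directions of Theorem~B: choosing $h=\pi/\deg$ in (\ref{eqA1}) produces the Bernstein inequality $\Vert V_j^{(r)}\Vert_p\le C(2^{j+1}n)^r\Vert V_j\Vert_p$, and then the upper bound in (\ref{eqA1}) gives the claim. Splitting the $j$-sum at the index $j^*$ with $2^{j^*}n\sim1/h$ (Bernstein bound for $j\le j^*$, trivial bound for $j>j^*$) and inserting $\Vert V_j\Vert_p\le CE_{2^jn}(f)_p$, one checks that $h^{-\a p_1}\Vert\D_h^r(f-T_n)\Vert_p^{p_1}\le C\sum_{j\ge0}(2^jn)^{\a p_1}E_{2^jn}(f)_p^{p_1}$ uniformly in $h\in(0,1/n)$; in the Bernstein portion the terms carry a factor $(2^j/2^{j^*})^{(r-\a)p_1}$, and it is exactly the hypothesis $\a\le r$ that makes the exponent $r-\a$ nonnegative so these factors stay bounded.

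Finally, the monotonicity of $\nu\mapsto E_\nu(f)_p$ gives the standard condensation $\sum_{j\ge1}(2^jn)^{\a p_1}E_{2^jn}(f)_p^{p_1}\le C\sum_{\nu\ge n}\nu^{\a p_1-1}E_\nu(f)_p^{p_1}$, while the $j=0$ term equals $n^{\a p_1}E_n(f)_p^{p_1}$. Combining the contributions from $h\ge1/n$ and $h<1/n$ with the $L_p$-part and extracting $p_1$-th roots delivers the upper bound in (\ref{eqlem4pr1}). I expect the only genuinely delicate point to be the bookkeeping in the two-regime split for small $h$: one must verify that, after the weight $h^{-\a p_1}$ is applied, both the Bernstein and the trivial portions collapse to the same $h$-independent dyadic sum, which is precisely where the monotonicity of $E_\nu(f)_p$ and the constraint $\a\le r$ are needed.
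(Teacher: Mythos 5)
Your proposal is correct and follows essentially the same route as the paper's own proof: Theorem~A gives the lower bound, and the upper bound comes from telescoping the best $L_p$-approximants of $f$ into dyadic polynomial blocks, estimating each block by combining Theorem~B (the Stechkin--Nikolskii/Bernstein bound, which is exactly where $\a\le r$ enters) with the trivial bound $\Vert \D_h^r \cdot\Vert_p\le 2^{r/p_1}\Vert\cdot\Vert_p$, and then recombining via the standard condensation argument using the monotonicity of $E_\nu(f)_p$. The only differences are organizational: you anchor the dyadic grid at $n$ (blocks $T_{2^{j+1}n}-T_{2^j n}$), which spares you the paper's separate correction term $S_1=|T_{2^m}-T_n|_{H_p^{r,\a}}$, and you fix $h$ and split the block sum at the index $j^*$ matched to $h$, whereas the paper bounds $\sup_h$ by the sum of per-block suprema and splits the $h$-range at each block's own scale --- the resulting per-block estimates are identical.
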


\begin{proof}
Let $T_n$, $n\in \N$, be such that
$
\Vert f-T_n\Vert_{H_p^{r,\a}}=E_n(f)_{H_p^{r,\a}}.
$
Then, by using Theorem~A, we obtain
\begin{equation*}
\begin{split}
n^\a E_n(f)_p &\le C n^\a\w_r(f-T_n,1/n)_p \\
&\le C\sup_{0<h\le 1/n}\frac{\Vert \D_h^r (f-T_n)\Vert_p}{h^{\a}}\le C E_n(f)_{H_p^{r,\a}}.
\end{split}
 \end{equation*}

Now let us prove the estimate from above. Let $T_n$, $n\in\N$,  be the polynomials of the best approximation of $f$ in $L_p$ and
let $m\in \N$ be such that $2^{m-1}\le n<2^m$.
We can write
\begin{equation}\label{eqPPRREEDD}
  f=T_{2^m}+\sum_{\nu=m}^\infty U_{2^\nu}\quad \text{in}\quad L_p,
\end{equation}
where $U_{2^\nu}=T_{2^{\nu+1}}-T_{2^\nu}$.
Using (\ref{eqPPRREEDD}) we have
\begin{equation}\label{eqlem4.1}
  |f-T_n|_{H_p^{r,\a}}^{p_1}\le |T_{2^m}-T_n|_{H_p^{r,\a}}^{p_1}+
\sum_{\nu=m}^\infty |T_{2^{\nu+1}}-T_{2^\nu}|_{H_p^{r,\a}}^{p_1}=S_1+S_2.
\end{equation}
By using Theorem~B, we obtain
\begin{equation}\label{eqlem4.2}
\begin{split}
  S_1&\le \(\sup_{0<h<2^{-m}}+\sup_{h\ge 2^{-m}}\)\frac{\Vert \D_h^r (T_{2^m}-T_n)\Vert_p^{p_1}}{h^{\a p_1}}\\
     &\le C 2^{\a p_1 m}\(\Vert \D_{2^{-m}}^r(T_{2^m}-T_n)\Vert_p^{p_1}+\Vert T_{2^m}-T_n\Vert_p^{p_1}\)\\
     &\le C 2^{\a p_1 m}\Vert T_{2^m}-T_n\Vert_p^{p_1}\le C n^{\a {p_1}}E_n(f)_p^{p_1}.
\end{split}
\end{equation}
Again, by using Theorem~B, we have
\begin{equation}\label{eqcor1pr1}
\begin{split}
S_2&\le \sum_{\nu=m}^\infty \sup_{0<h\le 2^{-\nu-1}}h^{-\a {p_1}}\Vert \D_h^r U_{2^\nu}\Vert_p^{p_1}+\sum_{\nu=m}^\infty \sup_{h\ge 2^{-\nu-1}}h^{-\a {p_1}}\Vert \D_h^r U_{2^\nu}\Vert_p^{p_1}\\
&\le C\sum_{\nu=m}^\infty 2^{\a {p_1} \nu}\Vert \D_{2^{-\nu-1}}^r U_{2^\nu}\Vert_p^{p_1}+\sum_{\nu=m}^\infty 2^{\a {p_1} \nu}\sup_{h\ge 2^{-\nu-1}}\Vert \D_h^r U_{2^\nu}\Vert_p^{p_1}\\
&\le C\sum_{\nu=m}^\infty 2^{\a {p_1} \nu}\Vert  U_{2^\nu}\Vert_p^{p_1}\le C\sum_{\nu=m}^\infty 2^{\a p_1 \nu}E_{2^\nu}(f)_p^{p_1}\le C\sum_{\mu=n}^\infty \mu^{\a {p_1}-1}E_\mu(f)_p^{p_1}.
\end{split}
\end{equation}
Thus, combining (\ref{eqlem4.1})--(\ref{eqcor1pr1}), we obtain the upper estimate in (\ref{eqlem4pr1}).
\end{proof}


One can see from the above lemma that $E_n(f)_{H_p^{r,\a}}$ can tend to zero very fast.
But at the same time if for some function $f\in L_p$ we have  $\theta_{r,\a}(f,\d)_p=o(\d^{r-\a})$, then $f\equiv \mathrm{const}$.
Besides, if $1\le p\le\infty$ and $f\not\equiv{\rm const}$, then $\t_{r,r}(f,\d)_p\ge C>0$ (see Lemma~\ref{lemHolder}).
Thus, estimates (\ref{eqR2}) and (\ref{eqR3}) are far from being sharp, because of the failure of $\theta_{r,\a}(f,\d)_p$. We introduce the new "modulus of smoothness", which, as we think,  will be more natural and useful in the Jackson-type theorem in the H\"older spaces $H_p^{r,\a}$. At least, the idea of this modulus of smoothness works for the strong converse inequalities in the H\"older spaces (see the next section).

Let $0<p\le\infty$, $0<\a\le r$, and $r,k\in\N$. Denote
$$
\psi_{k,r,\a}(f,\d)_{p}:= \sup\limits_{0<h\le \d}\frac{\w_k(\D_h^r f,\d)_p}{h^\a}.
$$

It is easy to see that for $f\in L_p$, $r,k\in \N$, $0<\a\le r$, and $\d>0$
\begin{equation}\label{eqPropNewM}
  \t_{k+r,\a}(f,\d)_p\le \psi_{k,r,\a}(f,\d)_p\le C \t_{\min(k,r),\a}(f,\d)_p,
\end{equation}
where $C$ is a constant independent of $f$ and $\d$.
%

Now let us consider the following improvement of Theorem~\ref{cor2}. Actually, in view of
(\ref{eqPropNewM}), it is an improvement only in the case $\a=r$.


\begin{theorem}\label{th3}
  Let $f\in H_p^{r,\a}$, $0<p\le\infty$, $0<\a\le r$, and $k,r,n\in\N$. Then
\begin{equation}\label{eqM77}
\begin{split}
  E_n(f)_{H_p^{r,\a}}&\le
                           C\begin{cases}
                             \displaystyle \psi_{k,r,\a}\(f,1/n\)_{p}, & \hbox{$1\le p\le\infty$,} \\
                             \displaystyle \(\int_0^{1/n}\(\frac{\w_{r+k}(f,t)_p}{t^\a}\)^{p}\frac{\mathrm{d}t}{t}\)^\frac1{p}, & \hbox{$0<p<1$.}
                           \end{cases}
                           \end{split}
\end{equation}
In particular, for all $0<p\le\infty$ we have
\begin{equation}\label{eqM77D}
E_n(f)_{H_p^{r,\a}}\le C \(\int_0^{1/n}\(\frac{\w_{r+k}(f,t)_p}{t^\a}\)^{p_1}\frac{\mathrm{d}t}{t}\)^\frac1{p_1},
\end{equation}
where $C$ is a constant independent of $f$ and $n$.
\end{theorem}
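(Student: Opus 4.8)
The plan is to prove the unified integral bound (\ref{eqM77D}) for every $0<p\le\infty$ first, which already settles the case $0<p<1$ of (\ref{eqM77}) since there $p_1=p$, and then to establish the sharper H\"older estimate in (\ref{eqM77}) for $1\le p\le\infty$ by a separate, direct argument built on a reproducing convolution operator.

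For (\ref{eqM77D}) I would start from Lemma~\ref{lem4}, which gives
\[
E_n(f)_{H_p^{r,\a}}\le C\(n^\a E_n(f)_p+\(\sum_{\nu=n}^\infty \nu^{\a p_1-1}E_\nu(f)_p^{p_1}\)^\frac1{p_1}\).
\]
Applying the Jackson inequality of Theorem~A with order $r+k$ replaces every $E_\nu(f)_p$ by $C\,\w_{r+k}(f,1/\nu)_p$. It then remains to pass from the resulting weighted series, together with the leading term $n^\a\w_{r+k}(f,1/n)_p$, to the integral on the right of (\ref{eqM77D}). This is the standard discretization argument: using the monotonicity of $t\mapsto\w_{r+k}(f,t)_p$ one estimates $\nu^{\a p_1-1}\w_{r+k}(f,1/\nu)_p^{p_1}$ from above by a fixed multiple of $\int_{1/(\nu+1)}^{1/\nu}\w_{r+k}(f,t)_p^{p_1}t^{-\a p_1-1}\mathrm{d}t$, while the term $n^{\a p_1}\w_{r+k}(f,1/n)_p^{p_1}$ is absorbed into $\int_{1/(2n)}^{1/n}$ after invoking the doubling property (\ref{svmod2}). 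Summing over $\nu\ge n$ yields exactly (\ref{eqM77D}).

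For the H\"older bound when $1\le p\le\infty$ I would fix a de la Vall\'ee Poussin--type convolution operator $V_n$ that reproduces $\mathcal{T}_n$ and satisfies $\Vert V_n\Vert_{(p)}\le C$ uniformly in $n$; such an operator exists for all $1\le p\le\infty$. Being a convolution, $V_n$ commutes with every finite difference, which gives the key identity $\D_h^r(f-V_nf)=(I-V_n)\D_h^r f$, and since $V_n$ reproduces $\mathcal{T}_n$ the operator $I-V_n$ furnishes near-best approximation, so $\Vert(I-V_n)g\Vert_p\le C E_n(g)_p$ for all $g\in L_p$. To estimate $|f-V_nf|_{H_p^{r,\a}}$ I split the supremum over $h$ at $1/n$. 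For $0<h\le 1/n$ the identity combined with Theorem~A (order $k$) applied to $g=\D_h^r f$ gives
\[
\frac{\Vert\D_h^r(f-V_nf)\Vert_p}{h^\a}\le C\,\frac{\w_k(\D_h^r f,1/n)_p}{h^\a}\le C\,\psi_{k,r,\a}(f,1/n)_p,
\]
which is precisely the target. For $h>1/n$, since $h^{-\a}<n^\a$, I bound $\Vert\D_h^r(f-V_nf)\Vert_p\le 2^r\Vert f-V_nf\Vert_p\le CE_n(f)_p\le C\w_{r+k}(f,1/n)_p$ and then use $\w_{r+k}(f,1/n)_p\le n^{-\a}\psi_{k,r,\a}(f,1/n)_p$, which is contained in the left-hand estimate of (\ref{eqPropNewM}); this again produces $C\psi_{k,r,\a}(f,1/n)_p$. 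The same estimate controls the $L_p$ term $\Vert f-V_nf\Vert_p$, and adding the two pieces of the norm finishes the case $1\le p\le\infty$.

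The main obstacle is the case $p\ge 1$, whose whole point is to transfer a Jackson estimate from $\D_h^r f$ rather than from $f$, so that the bound emerges in terms of $\w_k(\D_h^r f,1/n)_p$ and hence of $\psi_{k,r,\a}$; this is exactly what the commuting reproducing operator $V_n$ buys, and it is why the sharp H\"older form is available for $p\ge 1$. When $0<p<1$ no nontrivial such multiplier exists, which forces the detour through the dyadic decomposition of Lemma~\ref{lem4} and the sum-to-integral passage, and explains why only the weaker integral form (\ref{eqM77D}) can be obtained in that range.
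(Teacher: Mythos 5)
Your proposal is correct and follows essentially the same route as the paper: for $0<p<1$ the paper likewise combines Lemma~\ref{lem4} with Theorem~A (order $r+k$) and a sum-to-integral passage, and for $1\le p\le\infty$ it uses a Jackson-type convolution operator $V_n$ (from DeVore--Lorentz) that commutes with $\D_h^r$, splitting the H\"older seminorm at $h=1/n$ and estimating the two pieces exactly as you do. Your only deviations are cosmetic: you realize $V_n$ as a de la Vall\'ee Poussin mean combined with near-best approximation and Theorem~A instead of an operator with a built-in Jackson bound (note such a $V_n$ maps into $\mathcal{T}_{2n}$, so a trivial index adjustment via the doubling property of $\psi_{k,r,\a}$ is needed), and you obtain (\ref{eqM77D}) for $p\ge 1$ directly from Lemma~\ref{lem4} rather than through the lower estimate in (\ref{eqasympOw}) as the paper does.
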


\begin{proof}

Inequality  (\ref{eqM77}) in the case $0<p<1$ is a direct consequence of Lemma~\ref{lem4} and Theorem~A, that is we have
\begin{equation}\label{EQU}
  E_n(f)_{H_p^{r,\a}}^p\le C\(n^{\a p}E_n(f)_p^p+\sum_{\nu=n}^\infty \nu^{\a p-1}E_\nu(f)_p^p\)\le C\int_0^{1/n}\(\frac{\w_{r+k}(f,t)_p}{t^\a}\)^p\frac{\mathrm{d}t}{t}.
\end{equation}

Let us consider the case $1\le p\le\infty$.
Let $V_n$, $n\in \N$, be linear polynomial operators of the form (\ref{eqI1}) such that
\begin{equation}\label{eqVJ}
  \Vert f-V_n(f)\Vert_p\le C_{k+r}\w_{k+r}(f,1/n)_p.
\end{equation}
By~\cite[pp. 204-205]{DL} such operators always exists.

Thus, by using (\ref{eqVJ}), we have
\begin{equation}\label{eqPP}
\begin{split}
  E_n(f)_{H_p^{r,\a}}&\le C\(\Vert f-V_n(f)\Vert_p+|f-V_n(f)|_{H_p^{r,\a}}\)\\
&\le C\(\w_{r+k}(f,1/n)_p+|f-V_n(f)|_{H_p^{r,\a}}\).
\end{split}
\end{equation}
It is evident that one only needs to estimate the second term of the right-hand side in (\ref{eqPP}).
We have
\begin{equation}\label{eqPP1}
\begin{split}
|f-V_n(f)|_{H_p^{r,\a}}\le \(\sup_{0<h<1/n}+\sup_{h\ge1/n}\)\frac{\Vert \D_h^r (f-V_n(f))\Vert_p}{h^\a}=S_1+S_2.
\end{split}
\end{equation}
Again by using (\ref{eqVJ}), we get
\begin{equation}\label{eqPP2}
\begin{split}
S_2&\le C n^\a \Vert f-V_n(f)\Vert_p \le Cn^\a \w_{r+k}(f,1/n)_p=Cn^\a \sup_{0<\d\le 1/n}\Vert \D_\d^k\D_\d^r f\Vert_p\\
&\le C n^\a \sup_{0<h\le 1/n} \sup_{0<\d\le 1/n}\Vert \D_\d^k\D_h^r f\Vert_p\le C \psi_{k,r,\a}\(f,\frac1n\)_{p}.
\end{split}
\end{equation}
To estimate $S_1$ we use the equality
$\D_h^r V_n(f)=V_n (\D_h^r f)$
and once again (\ref{eqVJ}). Thus, we have
\begin{equation}\label{eqPP3}
\begin{split}
S_1=\sup_{0<h\le 1/n}\frac{\Vert \D_h^r f-V_n(\D_h^r f)\Vert_p}{h^\a}\le C\psi_{k,r,\a}\(f,\frac1n\)_{p}.
\end{split}
\end{equation}
Hence, combining (\ref{eqPP1})--(\ref{eqPP3}), we get (\ref{eqM77}) for $1\le p\le\infty$, and (\ref{eqM77}) is proved.

To prove (\ref{eqM77D}) one can use (\ref{eqasympOw}).
\end{proof}

It turns out that under some natural condition on a function $f$ the modulus of smoothness $\psi_{k,r,\a}(f,1/n)_p$ is equivalent to the corresponding integral in (\ref{eqM77D}).

\begin{lemma}\label{lemCOND}
Let $f\in L_p$, $0<p\le \infty$,  $0<\a\le r$, and $k,r\in\N$. Suppose that there  exists a positive constant $C$ independent of $f$ and $\d$ such that
\begin{equation}\label{eqM5_33}
   \(\int_0^{\d}\(\frac{\w_{r+k}(f,t)_p}{t^\a}\)^{p_1}\frac{\mathrm{d}t}{t}\)^\frac1{p_1}\le C \frac{\w_{r+k}\(f,\d\)_p}{\d^{\a }},\quad 0<\d<1.
\end{equation}
Then
\begin{equation}\label{eqasympOw}
  \(\int_0^{\d}\(\frac{\w_{r+k}(f,t)_p}{t^\a}\)^{p_1}\frac{\mathrm{d}t}{t}\)^\frac1{p_1}\asymp \psi_{k,r,\a}\(f,\d\)_p,\quad 0<\d<1.
\end{equation}
\end{lemma}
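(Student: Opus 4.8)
Let me denote the left-hand side of~(\ref{eqasympOw}) by $I(\d)$. The plan is to establish the two inequalities $I(\d)\le C\psi_{k,r,\a}(f,\d)_p$ and $\psi_{k,r,\a}(f,\d)_p\le CI(\d)$ separately; only the first will invoke the hypothesis~(\ref{eqM5_33}). A preliminary observation needed in both directions is that $I(\d)$ is \emph{unconditionally} comparable from below with $\w_{r+k}(f,\d)_p/\d^\a$: since $\w_{r+k}(f,\cdot)_p$ is nondecreasing and $\w_{r+k}(f,\d)_p\le C\w_{r+k}(f,\d/2)_p$ by~(\ref{svmod2}), restricting the integral to $[\d/2,\d]$ gives $\w_{r+k}(f,\d)_p/\d^\a\le CI(\d)$. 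Hypothesis~(\ref{eqM5_33}) is precisely the reverse bound, so under~(\ref{eqM5_33}) we have $I(\d)\asymp\w_{r+k}(f,\d)_p/\d^\a$.

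The direction $I(\d)\le C\psi_{k,r,\a}(f,\d)_p$ then follows at once. By~(\ref{eqM5_33}) and the left inequality in~(\ref{eqPropNewM}),
\[
I(\d)\le C\frac{\w_{r+k}(f,\d)_p}{\d^\a}\le C\sup_{0<t\le\d}\frac{\w_{r+k}(f,t)_p}{t^\a}=C\t_{k+r,\a}(f,\d)_p\le C\psi_{k,r,\a}(f,\d)_p.
\]

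For the reverse inequality $\psi_{k,r,\a}(f,\d)_p\le CI(\d)$ I fix $\d\in(0,1)$, pick $m\in\N$ with $2^{-m-1}\le\d<2^{-m}$, and take polynomials $T_{2^j}$ of best approximation of $f$ in $L_p$, writing $f=T_{2^m}+\sum_{j\ge m}U_{2^j}$ with $U_{2^j}=T_{2^{j+1}}-T_{2^j}$. Since $\psi_{k,r,\a}(\cdot,\d)_p$ is quasi-subadditive (the $p_1$-triangle inequality for $\Vert\cdot\Vert_p$ together with subadditivity of the suprema), it suffices to bound the contributions of $T_{2^m}$ and of the tail separately. The low-frequency polynomial $T_{2^m}$ must be kept as a single object and estimated through Theorem~B: for $u,h\le\d\le\pi/2^m$, two applications of~(\ref{eqA1}), using that differences commute with derivatives, give $\Vert\D_u^k\D_h^r T_{2^m}\Vert_p\le C u^k h^r\Vert T_{2^m}^{(k+r)}\Vert_p$, while~(\ref{eqA1}) and~(\ref{eqA2}) applied with order $k+r$ yield $\Vert T_{2^m}^{(k+r)}\Vert_p\le C 2^{m(k+r)}\w_{r+k}(f,2^{-m})_p\le C\d^{-(k+r)}\w_{r+k}(f,\d)_p$. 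Taking the supremum over $u,h\le\d$ (where $k\ge0$ and $r-\a\ge0$ are used) bounds the $T_{2^m}$-contribution to $\psi_{k,r,\a}(f,\d)_p$ by $C\w_{r+k}(f,\d)_p/\d^\a$, hence by $CI(\d)$ by the first paragraph.

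For the tail I use the $p_1$-triangle inequality together with the Bernstein-type estimate $\Vert\D_u^k\D_h^r P\Vert_p\le C\min(1,(uN)^k)\min(1,(hN)^r)\Vert P\Vert_p$ for $P\in\mathcal{T}_N$, which follows from~(\ref{eqA1}) and Bernstein's inequality. As $2^{j+1}\ge1/\d$ for $j\ge m$, taking the supremum first over $u\le\d$ (bounded by $1$) and then over $h\le\d$ (bounded by $C2^{j\a}$) gives
\[
\psi_{k,r,\a}\Big(\sum_{j\ge m}U_{2^j},\d\Big)_p^{p_1}\le C\sum_{j\ge m}2^{j\a p_1}\Vert U_{2^j}\Vert_p^{p_1}\le C\sum_{j\ge m}2^{j\a p_1}E_{2^j}(f)_p^{p_1},
\]
and Theorem~A ($E_{2^j}(f)_p\le C\w_{r+k}(f,2^{-j})_p$) together with a comparison of the dyadic sum with the integral identifies the right-hand side with $CI(\d)^{p_1}$. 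Adding the two contributions completes the proof. The one genuinely delicate step is the low-frequency estimate: replacing $T_{2^m}$ by the sum of its dyadic blocks and summing the crude bound over all $j\le m$ loses a factor $\log(1/\d)$ (as is visible already for $C^\infty$ functions), so it is essential to handle $T_{2^m}$ via the sharp Stechkin--Nikolskii inequalities rather than block by block.
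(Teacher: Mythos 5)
Your proof is correct and follows essentially the same route as the paper: the upper bound for the integral via hypothesis~(\ref{eqM5_33}) combined with $\d^{-\a}\w_{r+k}(f,\d)_p\le C\psi_{k,r,\a}(f,\d)_p$, and the lower bound via the decomposition $f=T_{2^m}+\sum_{j\ge m}U_{2^j}$ at the dyadic scale of $\d$, with the low-frequency polynomial handled as a single object through the Stechkin--Nikolskii inequalities (Theorem~B) plus Jackson's theorem, and the tail reduced to $\sum_j 2^{j\a p_1}E_{2^j}(f)_p^{p_1}$ and compared with the integral. The only cosmetic differences are that you re-derive the block estimate for the tail (which the paper imports as inequality~(\ref{eqcor1pr1}) from the proof of Lemma~\ref{lem4}) and reach $\psi$ via~(\ref{eqPropNewM}) rather than~(\ref{eqPP2}).
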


\begin{remark}
 1) Note that the estimate from below in (\ref{eqasympOw}), which is used in the proof of Theorem~\ref{th3}, holds without assumption (\ref{eqM5_33}).

 2) Note that condition (\ref{eqM5_33}) is equivalent to
$$
\int_0^{\d}\frac{\w_{r+k}(f,t)_p}{t^\a}\frac{\mathrm{d}t}{t}\le C \frac{\w_{r+k}\(f,\d\)_p}{\d^{\a}},\quad 0<\d<1,
$$
where $C$ is a constant independent of $f$ and $\d$ (see~\cite[Corollary 4.10]{Tih04}).
\end{remark}

\begin{proof}
The estimation from above can be easily obtained from (\ref{eqM5_33}) and from
the corresponding estimation for $\d^{- \a} \w_{r+k}\(f,\d\)_p$ in (\ref{eqPP2}).

Let us prove the estimation from below. Let $T_{2^\nu}$, $\nu\in \Z_+$, be polynomials of the best approximation of $f$ in $L_p$. Let $n\in \Z_+$
be such that $2^{-(n+1)}\le \d<2^{-n}$.
We have
\begin{equation}\label{eqOw1}
\begin{split}
\psi_{k,r,\a}(f,\d)_p^{p_1}\le \psi_{k,r,\a}(f,2^{-n})_p^{p_1}\le \psi_{k,r,\a}(T_{2^n},2^{-n})_p^{p_1}+\psi_{k,r,\a}(f-T_{2^n},2^{-n})_p^{p_1}.
\end{split}
\end{equation}

To estimate the first term in the last inequality we use Theorem~A and Theorem~B. We obtain
\begin{equation}\label{eqOw2}
\begin{split}
\lefteqn{\psi_{k,r,\a}(T_{2^n},2^{-n})_p\le C2^{\a n} \Vert \D_{2^{-n}}^{r+k} T_{2^n}\Vert_p\le C 2^{\a n} \(\Vert f-T_{2^n}\Vert_p+ \Vert \D_{2^{-n}}^{r+k} f\Vert_p\)}\\
&\le C 2^{\a n}\w_{r+k}(f,2^{-n})_p\le C \d^{-\a}\w_{r+k}(f,\d/2)_p\\
&\le  C\(\int_{\d/2}^{\d}\(\frac{\w_{r+k}(f,t)_p}{t^\a}\)^{{p_1}}\frac{\mathrm{d}t}{t}\)^\frac1{{p_1}}\le  C\(\int_0^{\d}\(\frac{\w_{r+k}(f,t)_p}{t^\a}\)^{{p_1}}\frac{\mathrm{d}t}{t}\)^\frac1{{p_1}}.
\end{split}
\end{equation}
To estimate the second term in (\ref{eqOw1}) we use (\ref{eqcor1pr1}), (\ref{EQU}), and (\ref{svmod2}). We conclude
\begin{equation}\label{eqOw3}
\begin{split}
\lefteqn{\psi_{k,r,\a}(f-T_{2^n},2^{-n})_p^{p_1}\le C|f-T_{2^n}|_{H_p^{r,\a}}^{p_1}
\le C\sum_{\nu=n}^\infty 2^{\a {p_1} \nu}E_{2^\nu}(f)_p^{{p_1}}}\\
&\le C\int_0^{2^{-n}}\(\frac{\w_{r+k}(f,t)_p}{t^\a}\)^{{p_1}}\frac{\mathrm{d}t}{t}\le C\int_0^{\d}\(\frac{\w_{r+k}(f,t)_p}{t^\a}\)^{{p_1}}\frac{\mathrm{d}t}{t}.
\end{split}
\end{equation}

Thus, combining (\ref{eqOw1})--(\ref{eqOw3}), we obtain the estimate from below in (\ref{eqasympOw}).
\end{proof}

From Theorem~\ref{cor2} and inequality~(\ref{eqPropNewM}), Theorem~\ref{th3} and Lemma~\ref{lemCOND} we deduce the following assertion.

\begin{corollary}\label{corth3}
  Let $f\in H_p^{r,\a}$, $0<p\le\infty$, $0<\a\le r$, and $k,r\in\N$. For $0<p<1$ and $\a=r$ suppose also
  that $f$ satisfies condition (\ref{eqM5_33}).  Then
\begin{equation}\label{eqcorM77}
  E_n(f)_{H_p^{r,\a}}\le  C \psi_{k,r,\a}\(f,1/n\)_{p},
\end{equation}
where $C$ is a constant independent of $f$ and $n$.
\end{corollary}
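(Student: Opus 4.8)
The plan is to prove (\ref{eqcorM77}) by splitting into three cases and reducing each one directly to a result quoted above; the whole content is a matter of choosing, in each regime, the sharpest available estimate and transferring it to the $\psi$-modulus via (\ref{eqPropNewM}). First I would dispose of the range $1\le p\le\infty$: here the first branch of Theorem~\ref{th3} already asserts $E_n(f)_{H_p^{r,\a}}\le C\psi_{k,r,\a}\(f,1/n\)_{p}$ for every $0<\a\le r$, so (\ref{eqcorM77}) is immediate and no extra hypothesis on $f$ is needed.

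Next I would treat $0<p<1$ with $\a<r$, and the point is that one need not pass through the integral at all. Instead I apply Theorem~\ref{cor2} with the modulus order $k+r$ in place of $k$; this is legitimate, since $\a<r=\min(r,k+r)$, and it gives $E_n(f)_{H_p^{r,\a}}\le C\theta_{k+r,\a}\(f,1/n\)_{p}$. The left-hand inequality of (\ref{eqPropNewM}), namely $\theta_{k+r,\a}\(f,\d\)_{p}\le\psi_{k,r,\a}\(f,\d\)_{p}$, then converts this bound into (\ref{eqcorM77}). Crucially, no regularity assumption on $f$ intervenes here, which is exactly why condition (\ref{eqM5_33}) is imposed only in the remaining case.

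The one genuinely delicate case is $0<p<1$ together with $\a=r$. Here Lemma~\ref{lemHolder} is unavailable, so the route through $\theta_{k+r,\a}$ collapses (for $1\le p\le\infty$ already $\theta_{r,r}$ stays bounded below, and in general the higher-order $\theta$ no longer dominates correctly), and one must use the integrated estimate. I would start from the second branch of Theorem~\ref{th3}, which yields $E_n(f)_{H_p^{r,\a}}\le C\(\int_0^{1/n}\(\w_{r+k}(f,t)_p/t^\a\)^{p}\,\mathrm{d}t/t\)^{1/p}$; since $p_1=p$ for $0<p<1$, the right-hand side is precisely the left-hand quantity in (\ref{eqasympOw}). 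Invoking Lemma~\ref{lemCOND}, whose hypothesis is exactly the assumption (\ref{eqM5_33}) placed on $f$ in this case, I replace the integral by $\psi_{k,r,\a}\(f,1/n\)_{p}$, which finishes the proof.

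I expect the only real obstacle to be the bookkeeping of where (\ref{eqM5_33}) is and is not needed, rather than any new estimate: the substance is the observation that for $\a<r$ the Jackson estimate of Theorem~\ref{cor2} taken at the higher smoothness order $k+r$ already dominates $\psi_{k,r,\a}$ through (\ref{eqPropNewM}), so the integral characterization—and with it the regularity condition—can be bypassed, and is forced upon us only in the critical configuration $\a=r$, $0<p<1$.
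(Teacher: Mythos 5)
Your proposal is correct and follows exactly the deduction the paper intends (the paper states the corollary as a consequence of Theorem~\ref{cor2} with inequality~(\ref{eqPropNewM}), Theorem~\ref{th3}, and Lemma~\ref{lemCOND}, without writing out the case split): first branch of Theorem~\ref{th3} for $1\le p\le\infty$; Theorem~\ref{cor2} at order $k+r$ combined with the left-hand inequality of (\ref{eqPropNewM}) for $0<p<1$, $\a<r$; and the integral bound of Theorem~\ref{th3} converted by Lemma~\ref{lemCOND} under (\ref{eqM5_33}) for $0<p<1$, $\a=r$. Your bookkeeping of where (\ref{eqM5_33}) is needed is also exactly right.
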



Corollary~\ref{corth3} implies that one can replace the integral
by the modulus $\psi_{k,r,\a}(f,1/n)_p$ in (\ref{eqM77}) when $\a<r$ and $0<p<1$.
We do not know whether it is possible  in the case $\a=r$.
But we are inclined to believe that the answer is negative. Indeed, it is well-known that for any $f\in L_p$, $1< p<\infty$,
and for any $T_n$, $n\in \N$, such that $T_n\to f$ and $T_n'\to g$ in $L_p$ we have
$$
\Vert f-T_n\Vert_{H_p^{1,1}}\to 0\quad\text{as}\quad n\to\infty
$$
(see Lemma~\ref{lemHolderD} and~\cite[Lemma~4.5.5]{N}). In the case $0<p<1$ the situation is totally different. In particular,
the following proposition holds:
\begin{proposition}
  Let $0<p<1$. There exist a function $f\in L_p$ and polynomials $T_n$, $n\in \N$, such that
  $T_n \to f$ and $T_n'\to g$ in $L_p$, but
$$
\Vert f-T_n\Vert_{H_p^{1,1}}\ge C_f>0\quad\text{for sufficiently large}\quad n.
$$
\end{proposition}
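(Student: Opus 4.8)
The plan is to reduce the construction, by means of the Stechkin--Nikolskii inequality (Theorem~B), to producing polynomials that converge in $L_p$ while their derivatives converge to a \emph{different} limit. Concretely, I would take $f\equiv 0$ and look for polynomials $T_n$ with $T_n\to 0$ in $L_p$ and $T_n'\to g$ in $L_p$ for some $g\neq 0$. Granting such a sequence, write $d_n=\deg T_n$ and apply (\ref{eqA1}) with $r=1$ at the scale $h=\pi/d_n$, which gives $\Vert\D_{\pi/d_n}^1 T_n\Vert_p\ge c\,(\pi/d_n)\Vert T_n'\Vert_p$; hence
$$|f-T_n|_{H_p^{1,1}}=\sup_{h>0}\frac{\Vert\D_h^1 T_n\Vert_p}{h}\ge \frac{\Vert\D_{\pi/d_n}^1 T_n\Vert_p}{\pi/d_n}\ge c\,\Vert T_n'\Vert_p\to c\,\Vert g\Vert_p>0,$$
so that $\Vert f-T_n\Vert_{H_p^{1,1}}\ge C_f>0$ for all large $n$. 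The same lower bound works verbatim if one prefers $f$ to be the function (\ref{eqCondModR}) with $r=1$: since then $\w_1(f,h)_p\asymp h^{1/p}=o(h)$, the $f$-part of $\D^1_{\pi/d_n}(f-T_n)$ is negligible against $(\pi/d_n)\Vert T_n'\Vert_p$, and the same computation applies, provided $g\neq f'$.

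What remains, and what is the heart of the matter, is to show that such polynomials exist --- equivalently, that differentiation is \emph{not closable} on $L_p(\T)$ for $0<p<1$. This is a genuinely sub-$1$ phenomenon. For $1\le p<\infty$ the $L_p$-limit of $T_n$ determines its distributional derivative, so $g=f'$, and together with the Sobolev description of $H_p^{1,1}$ (Lemma~\ref{lemHolderD}) this forces $\Vert f-T_n\Vert_{H_p^{1,1}}\to 0$; this is exactly why the assertion preceding the proposition holds in that range. For $0<p<1$, however, $L_p(\T)$ has trivial dual, carries no nontrivial Fourier multipliers, and the averaging operators $h^{-1}\int_0^h(\,\cdot\,+t)\,dt$ are unbounded; hence $\Vert T_n\Vert_p\to 0$ need not annihilate the Fourier coefficients of $T_n$, and the low modes of $T_n'$ may survive in the limit.

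To build the $T_n$ I would work on dyadic frequency blocks and exploit the extremal behaviour behind (\ref{eqCondMod}). The key quantitative input is the Nikolskii inequality $\Vert S\Vert_1\le C\,m^{1/p-1}\Vert S\Vert_p$ for $S\in\mathcal{T}_m$: on the one hand it shows that any sequence with $d_n^{1/p-1}\Vert T_n\Vert_p\to 0$ has $|\widehat g(k)|\le |k|\lim C d_n^{1/p-1}\Vert T_n\Vert_p=0$, i.e. $g=0$, so one is forced to sit at the extremal scale $\Vert T_n\Vert_p\asymp d_n^{1-1/p}$; on the other hand, polynomials saturating this inequality (for which $\Vert T_n\Vert_p\asymp d_n^{1-1/p}\to 0$ while $O(1)$ Fourier coefficients persist) are the natural candidates. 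I would assemble $T_n$ from such blocks with weights chosen so that, in the $L_p$-quasinorm, the function parts telescope to $0$ while the derivative parts reinforce to a fixed nonzero $g$, controlling both quasinorms blockwise through Theorem~A and Theorem~B.

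The main obstacle is precisely this simultaneous control: the requirements $\Vert T_n\Vert_p\to 0$ and $\Vert T_n'-g\Vert_p\to 0$ pull in opposite directions, and ``good'' choices are self-defeating. For instance, the best $L_p$-approximants $S_n$ of the square wave (\ref{eqCondModR}) satisfy $S_n\to f$ and, by (\ref{eqA2}) and (\ref{eqA1}), $\Vert S_n'\Vert_p\le C\,n\,\w_1(f,1/n)_p\le C\,n^{1-1/p}\to 0$; splitting $\Vert\D_h^1(f-S_n)\Vert_p/h$ at $h=\pi/n$ then shows $\Vert f-S_n\Vert_{H_p^{1,1}}\to 0$, so these are \emph{not} a counterexample (they realise $g=f'$ only in the harmless sense $g=0$). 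The construction must therefore be engineered to remain at the Nikolskii-extremal rate rather than at the Jackson rate, and the delicate step is to verify that at this borderline the derivatives still form an $L_p$-Cauchy sequence with nonzero limit. I expect this borderline bookkeeping --- reconciling genuine $L_p$-convergence of both $T_n$ and $T_n'$ with a persistent difference $g\neq f'$ --- to be the real difficulty.
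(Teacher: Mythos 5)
Your lower-bound mechanism is valid, and it is in essence the same Stechkin--Nikolskii argument the paper uses, only with the roles of the two terms interchanged: in the paper it is $f$ (a tent function) whose H\"older seminorm over $0<h\le 1/n$ is bounded below by an absolute constant while the polynomial contributes only $O(\Vert T_n'\Vert_p)$ via (\ref{eqA1}); in your version it is the polynomial whose seminorm is bounded below, $|T_n|_{H_p^{1,1}}\ge c\Vert T_n'\Vert_p$, while $f\equiv 0$ contributes nothing. The genuine gap is that everything then hinges on the existence of polynomials with $T_n\to 0$ and $T_n'\to g\neq 0$ in $L_p$ --- the non-closability of differentiation for $p<1$ --- and this you never establish. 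The dyadic-block plan at the Nikolskii-extremal scale $\Vert T_n\Vert_p\asymp d_n^{1-1/p}$ is only a heuristic: no blocks are exhibited, no verification that the derivatives form an $L_p$-Cauchy sequence with nonzero limit is given, and you explicitly defer this as ``the real difficulty''. Since the proposition \emph{is} precisely the assertion that this pathology occurs, the proposal as it stands proves nothing. (A further soft spot: your argument that $d_n^{1/p-1}\Vert T_n\Vert_p\to 0$ forces $g=0$ presumes that $L_p$-convergence of $T_n'$ with $p<1$ implies convergence of Fourier coefficients; that is false in general --- it is exactly the pathology at stake --- and the limit $g$ need not even belong to $L_1$.)

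The missing idea, which the paper takes from~\cite{diti07} and~\cite{Kop}, is a concentration construction. Let $f$ be the tent function, so that $\w_1(f,h)_p\asymp h$ and $\sup_{0<h\le 1/n}h^{-1}\Vert \D_h^1 f\Vert_p\ge c_0>0$. Approximate $f$ by the smoothed staircases $\vp_n$, whose derivatives have modulus $n$ on a set of measure $O(1/n)$ and vanish elsewhere, so that $\Vert \vp_n'\Vert_p\le Cn^{1-1/p}\to 0$ and, by Kopotun's inequality (\ref{eqKop}), $\w_1(\vp_n,1/n)_p\le Cn^{-1/p}$. Now let $T_n$ be best $L_p$-approximants of $\vp_n$; Theorem~A, Theorem~B and the quasi-triangle inequality give $\Vert f-T_n\Vert_p\le Cn^{-1}$ and $\Vert T_n'\Vert_p\le Cn\,\w_1(\vp_n,1/n)_p\le Cn^{1-1/p}$, i.e.\ exactly the configuration $T_n\to f$, $T_n'\to 0\neq f'$, and then
\begin{equation*}
\Vert f-T_n\Vert_{H_p^{1,1}}^p\ \ge\ \sup_{0<h\le 1/n}\frac{\Vert \D_h^1 f\Vert_p^p}{h^p}\ -\ \sup_{0<h\le 1/n}\frac{\Vert \D_h^1 T_n\Vert_p^p}{h^p}\ \ge\ c_0^p-C n^{p-1}.
\end{equation*}
If you insist on your configuration $f\equiv0$, $g\neq0$, it can be realized from the same ingredients: subtract from these $T_n$ the Fej\'er means of the tent, whose derivatives converge to the square wave in $L_p$ by dominated convergence. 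But in either case the indispensable step is the staircase construction concentrating the derivative on sets of small measure, and that is precisely what your proposal lacks.
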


\begin{proof}
We will use an example from~\cite{diti07}. Let
$$
f(x)=\left\{
       \begin{array}{ll}
        \displaystyle x, & \hbox{$x\in [0,\pi)$,} \\
        \displaystyle 2\pi-x, & \hbox{$x\in [\pi,2\pi]$}
       \end{array}
     \right.
$$
and $f(x)=f(x+2\pi)$. Let also
\begin{equation*}
g_{n}(x)=\left\{%
         \begin{array}{ll}
           \displaystyle\frac k{n}, & \hbox{$\displaystyle\frac{k}n\le x <\frac{k+1}{n}-\frac1{n^{2}}$,} \\
\phantom{.}\\
           \displaystyle\frac kn +\left(x-\frac{k+1}{n}+\frac1{n^{2}}\right)n, & \hbox{$\displaystyle\frac{k+1}{n}-\frac1{n^{2}}\le
x<\frac{k+1}{n}$}\\
         \end{array}%
       \right.
\end{equation*}
for $k=0,1,\dots, n-1$, $g_n(x)=1-g_n(x-1)$ for $1<x\le 2$, and $\vp_n(x)=\pi g_{n}(x/{\pi})$ for $x\in [0,2\pi]$.

We will need the following inequalities:
\begin{equation}\label{eqKop}
    \omega_1(\varphi_{n},1/n)_p\le Cn^{-1}\Vert \varphi_{n}'\Vert_p\le
Cn^{-\frac1p}.
\end{equation}
One can find the first inequality in~\cite{Kop}, but the second one can be verified by simple calculation. It is also easy to see that
$$
\Vert f -\varphi_{n}\Vert_p=\mathcal{O}\left(1/{n}\right).
$$

Let $T_{n}$, $n\in\N$, be polynomials of the best approximation of
$\varphi_{n}$ in  $L_p$, $0<p<1$. By using (\ref{eqI12}) and (\ref{eqKop}), we have
\begin{equation}\label{eqvL1}
\begin{split}
   \Vert f -T_{n}\Vert_p &\le C(\Vert
f-\varphi_{n}\Vert_p+\Vert \varphi_{n}-T_{n}\Vert_p)\\
&\le C(n^{-1}+\omega_1(\varphi_{n},1/n)_p)\le C n^{-1}.
\end{split}
\end{equation}
Taking into account Theorem~B and (\ref{eqKop}), we get
\begin{equation}\label{eqvLpDer}
    \Vert T_{n}'\Vert_p\le C n \omega_1(\varphi_{n},1/n)_p\le C
n^{1-\frac1p}.
\end{equation}
Thus, by using (\ref{eqA1}) and (\ref{eqvLpDer}), we obtain
\begin{equation*}
  \begin{split}
&\Vert f-T_n\Vert_{H_p^{1,1}}^p\ge \sup_{0<h\le 1/n}\frac{\Vert \D_h^1 (f-T_n)\Vert_p^p}{h^p}\\
&\ge \sup_{0<h\le 1/n}\frac{\Vert \D_h^1 f\Vert_p^p}{h^p}-\sup_{0<h\le 1/n}\frac{\Vert \D_h^1 T_n\Vert_p^p}{h^p}\ge \pi^p-C\Vert T_n'\Vert_p^p\ge \pi^p-Cn^{p-1}.
   \end{split}
\end{equation*}
\end{proof}

The following two theorems can be obtained in the same manner as Theorem~\ref{thR4} and  Theorem~\ref{th3KP} above.

\begin{theorem}\label{thR4con}
Let $f\in H_p^{r,\a}$, $0<p\le \infty$, $0<\a\le r$, $r,k\in \N$, and $n\in\N$.  Then
\begin{equation*}
 \psi_{k,r,\a}\(f,\frac 1n\)_{p}\le \frac{C}{n^{k}}\(\sum_{\nu=0}^{n} (\nu+1)^{k p_1-1} E_\nu(f)_{H_p^{r,\a}}^{p_1}\)^\frac1{p_1},
\end{equation*}
where $C$ is a constant independent of $f$ and $n$.
\end{theorem}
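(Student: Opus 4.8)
The plan is to follow the dyadic scheme from the proof of Theorem~\ref{thR4}, replacing the seminorm $\t_{k,\a}$ by $\psi_{k,r,\a}$ throughout. Let $T_\nu\in\mathcal{T}_\nu$ be polynomials of best approximation of $f$ in $H_p^{r,\a}$, and fix $m\in\N\cup\{0\}$ with $2^m\le n<2^{m+1}$. By the $p_1$-subadditivity of the quasi-seminorm I split
\begin{equation*}
  \psi_{k,r,\a}(f,1/n)_p^{p_1}\le \psi_{k,r,\a}(f-T_{2^{m+1}},1/n)_p^{p_1}+\psi_{k,r,\a}(T_{2^{m+1}},1/n)_p^{p_1}
\end{equation*}
and estimate the two terms separately.

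For the tail term I would first record the pointwise bound $\psi_{k,r,\a}(g,\d)_p\le 2^{k/p_1}|g|_{H_p^{r,\a}}$, which is immediate from the second inequality in (\ref{svmod1}), namely $\w_k(\D_h^r g,\d)_p\le 2^{k/p_1}\Vert\D_h^r g\Vert_p$, followed by division by $h^\a$ and the supremum over $h$. Applied to $g=f-T_{2^{m+1}}$ this gives $\psi_{k,r,\a}(f-T_{2^{m+1}},1/n)_p\le C|f-T_{2^{m+1}}|_{H_p^{r,\a}}\le CE_{2^{m+1}}(f)_{H_p^{r,\a}}$. Since $E_\nu(f)_{H_p^{r,\a}}$ is nonincreasing and $\sum_{\nu=0}^n(\nu+1)^{kp_1-1}\asymp n^{kp_1}$, one has $E_n(f)_{H_p^{r,\a}}^{p_1}\le Cn^{-kp_1}\sum_{\nu=0}^n(\nu+1)^{kp_1-1}E_\nu(f)_{H_p^{r,\a}}^{p_1}$, and because $E_{2^{m+1}}(f)_{H_p^{r,\a}}^{p_1}\le E_n(f)_{H_p^{r,\a}}^{p_1}$ the tail term is already controlled by the right-hand side of the assertion.

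The polynomial term is the heart of the argument, and the key step is the estimate
\begin{equation*}
  \psi_{k,r,\a}(T,1/n)_p\le Cn^{-k}|T^{(k)}|_{H_p^{r,\a}},\qquad T\in\mathcal{T}_{2^{m+1}},
\end{equation*}
which I would obtain from the direct part of Theorem~B: for $h\le 1/n$ and $\d\le 1/n\le\pi/2^{m+1}$ inequality (\ref{eqA1}) gives $\w_k(\D_h^r T,1/n)_p\le Cn^{-k}\Vert(\D_h^r T)^{(k)}\Vert_p=Cn^{-k}\Vert\D_h^r T^{(k)}\Vert_p$, and dividing by $h^\a$ and taking the supremum over $h$ leaves exactly the factor $|T^{(k)}|_{H_p^{r,\a}}$. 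Next I would establish a Bernstein-type inequality in the Hölder quasi-norm, $\Vert S^{(k)}\Vert_{H_p^{r,\a}}\le CN^k\Vert S\Vert_{H_p^{r,\a}}$ for $S\in\mathcal{T}_N$, by applying the classical Bernstein inequality $\Vert S^{(k)}\Vert_p\le CN^k\Vert S\Vert_p$ (itself a consequence of (\ref{eqA1})) both to $S$ and, for each fixed $h$, to the polynomial $\D_h^r S\in\mathcal{T}_N$ inside the seminorm. Telescoping $T_{2^{m+1}}^{(k)}=(T_1-T_0)^{(k)}+\sum_{\mu=0}^m(T_{2^{\mu+1}}-T_{2^\mu})^{(k)}$, applying this inequality to each block $T_{2^{\mu+1}}-T_{2^\mu}\in\mathcal{T}_{2^{\mu+1}}$, using $\Vert T_{2^{\mu+1}}-T_{2^\mu}\Vert_{H_p^{r,\a}}\le CE_{2^\mu}(f)_{H_p^{r,\a}}$, and passing from the dyadic sum $\sum_\mu 2^{\mu kp_1}E_{2^\mu}(f)_{H_p^{r,\a}}^{p_1}$ to $\sum_\nu(\nu+1)^{kp_1-1}E_\nu(f)_{H_p^{r,\a}}^{p_1}$ via the monotonicity of $E_\nu$, I get $\Vert T_{2^{m+1}}^{(k)}\Vert_{H_p^{r,\a}}^{p_1}\le C\sum_{\nu=0}^n(\nu+1)^{kp_1-1}E_\nu(f)_{H_p^{r,\a}}^{p_1}$. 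Combined with the displayed key step (taken with $\d=1/n$) this bounds the polynomial term by $Cn^{-kp_1}\sum_{\nu=0}^n(\nu+1)^{kp_1-1}E_\nu(f)_{H_p^{r,\a}}^{p_1}$.

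Adding the two contributions and taking the $p_1$-th root yields the claim. I expect the main obstacle to be the polynomial term: one must produce exactly the factor $n^{-k}$ and not a power involving $\a$. This is why the reduction $\psi_{k,r,\a}(T,1/n)_p\le Cn^{-k}|T^{(k)}|_{H_p^{r,\a}}$ is arranged so that the difference operator $\D_h^r$ is kept intact while only the $k$-th order modulus is converted into the $k$-th derivative by Theorem~B, leaving the remaining factor $h^{-\a}\Vert\D_h^r T^{(k)}\Vert_p$ free to reassemble into the Hölder seminorm $|T^{(k)}|_{H_p^{r,\a}}$; the Bernstein inequality in the Hölder quasi-norm, valid for all $0<p\le\infty$, then does the rest.
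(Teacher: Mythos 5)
Your proposal is correct and takes essentially the same route as the paper: the paper itself only says Theorem~\ref{thR4con} ``can be obtained in the same manner as Theorem~\ref{thR4}'', and your argument is exactly that dyadic telescoping scheme (best-approximation polynomials $T_{2^\mu}$ in $H_p^{r,\a}$, $p_1$-subadditive splitting, Theorem~B on each block), correctly adapted to $\psi_{k,r,\a}$. Your key adaptations are the right ones: keeping the difference $\D_h^r$ intact so that Theorem~B converts only the $k$-th modulus into the $k$-th derivative (producing the factor $n^{-k}$ instead of $n^{-(k-\a)}$), and replacing the passage through $H_p^{k,\a}$ via Lemma~\ref{lemHolder} by a Bernstein inequality in the H\"older seminorm, which is precisely why the statement holds in the full range $0<\a\le r$ without the restriction $\a<\min(r,k)$ needed in Theorem~\ref{thR4}.
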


\begin{theorem}\label{th3K}
                     Let $f\in H_p^{r,\a}$, $0<p\le \infty$,  $r,s\in\N$, $0<\a\le r$.  Suppose also that $f$ satisfies condition (\ref{eqM5_33}) in the case $0<p<1$ and $\a=r$.
There exists a constant $L>0$ such that for any $n\in\mathbb{N}$
                     \begin{equation}\label{rathore1}
                           \psi_{s,r,\a}\bigg(f,\frac 1n\bigg)_{p}\le LE_{n}(f)_{H_p^{r,\a}},
                     \end{equation}
                    iff for  some $k>s+1/{p_1}-1$ there exists a constant
                      $M>0$ such that for any $h\in(0,1]$
                     \begin{equation}\label{rathore2}
                            \psi_{s,r,\a}(f,h)_{p}\le M \psi_{k,r,\a}(f,h)_{p}.
                     \end{equation}
\end{theorem}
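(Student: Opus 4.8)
The plan is to follow the scheme of the proof of Theorem~\ref{th3KP}, replacing the role of $\t_{k,\a}$ by $\psi_{k,r,\a}$ and using three building blocks: the direct estimate of Corollary~\ref{corth3} (which, for $\a=r$ and $0<p<1$, is precisely where the hypothesis~(\ref{eqM5_33}) enters), the converse estimate of Theorem~\ref{thR4con}, and two structural properties of $\psi_{k,r,\a}$ playing the role of (\ref{svmod1T}) and (\ref{svmod2T}). The first is the order-monotonicity
\[
\psi_{k,r,\a}(f,\d)_p\le 2^{\frac{k-s}{p_1}}\psi_{s,r,\a}(f,\d)_p,\qquad s\le k,
\]
which follows at once from (\ref{svmod1}) applied to $\D_h^r f$. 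The second is a dilation inequality of the form $\psi_{s,r,\a}(f,\l\d)_p\le C(\l+1)^{E}\psi_{s,r,\a}(f,\d)_p$ for $\l>0$, to be obtained from (\ref{svmod2}); its derivation, and in particular the size of the admissible exponent $E$, is the delicate point discussed below.

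The easy direction, (\ref{rathore1})$\Rightarrow$(\ref{rathore2}), is immediate: by Corollary~\ref{corth3} and (\ref{rathore1}),
\[
\psi_{s,r,\a}\(f,\frac1n\)_p\le LE_n(f)_{H_p^{r,\a}}\le CL\,\psi_{k,r,\a}\(f,\frac1n\)_p,
\]
which is (\ref{rathore2}) for $h=1/n$; the estimate for arbitrary $h\in(0,1]$ then follows from the monotonicity of $\psi_{s,r,\a}(f,\cdot)_p$ together with the dilation inequality.

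For the main direction, (\ref{rathore2})$\Rightarrow$(\ref{rathore1}), I would first combine (\ref{rathore2}), the order-monotonicity, and the dilation inequality to obtain the analog of (\ref{rathoreqiP}), namely $\psi_{k,r,\a}(f,\l h)_p\le CM(\l+1)^{E}\psi_{k,r,\a}(f,h)_p$. Inserting the direct estimate $E_\nu(f)_{H_p^{r,\a}}\le C\psi_{k,r,\a}(f,1/(\nu+1))_p$ of Corollary~\ref{corth3} into this bound yields the analog of (\ref{rathInvP}),
\[
\frac{1}{n^{kp}}\sum_{\nu=0}^n(\nu+1)^{kp-1}E_\nu(f)_{H_p^{r,\a}}^p\le CM^p\,\psi_{k,r,\a}\(f,\frac1n\)_p^p
\]
(written for $0<p<1$; the case $1\le p\le\infty$ is identical with $p_1=1$). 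Plugging this into the converse estimate of Theorem~\ref{thR4con} applied at level $mn$ and splitting the sum at $\nu=n$ exactly as in (\ref{eqRathorXXXP}) produces an upper bound for $\psi_{k,r,\a}(f,1/(mn))_p^p$ by the tail sum $\sum_{\nu=n+1}^{mn}(\nu+1)^{kp-1}E_\nu^p$ (suitably normalized) plus $CM^pm^{-(k-E)p}\psi_{k,r,\a}(f,1/n)_p^p$. Rearranging, bounding the tail by $E_n^p\sum_{\nu=n+1}^{mn}(\nu+1)^{kp-1}$ via the monotonicity of $E_\nu$, and using the dilation inequality once more, one isolates a factor $\(Cm^{(k-E)p}-M^p\)$ and, choosing $m$ large, deduces $E_n(f)_{H_p^{r,\a}}^p\ge c\,\psi_{k,r,\a}(f,1/n)_p^p$. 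Combined with (\ref{rathore2}) this gives (\ref{rathore1}).

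The hard part is the dilation inequality and, more precisely, the size of the exponent $E$ it produces. Unlike $\t_{k,\a}$, the modulus $\psi_{k,r,\a}$ carries two independent scales — the step $h$ of the inner difference $\D_h^r$ and the outer smoothness parameter $\d$ — so dilating $\d\mapsto\l\d$ also enlarges the admissible range of steps from $(0,\d]$ to $(0,\l\d]$, and the contribution of the large steps $h\in(\d,\l\d]$ must be controlled. This is where (\ref{svmod2}) is used, together with a decomposition of $\D_h^r$ into differences of step $\le\d$. The whole scheme closes only if $E$ can be kept small enough that the geometric summation above converges under the stated hypothesis $k>s+1/{p_1}-1$; this is exactly the role of the cancellation between $E$ and the order-$k$ normalization $n^{-k}$ of Theorem~\ref{thR4con}, in analogy with how the exponent in (\ref{rathoreqiP}) cancels against the weight $(\nu+1)^{(k-\a)p_1-1}$ in the proof of Theorem~\ref{th3KP}. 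I expect verifying this compatibility, rather than the bookkeeping of the summation, to be the crux.
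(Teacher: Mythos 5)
Your plan is indeed the route the paper indicates (the paper gives no detailed proof of Theorem~\ref{th3K}; it only says it follows ``in the same manner'' from Corollary~\ref{corth3}, Theorem~\ref{thR4con} and the two structural properties), and your easy direction is correct. But the point you defer at the end --- that the scheme closes only if the dilation exponent $E$ for $\psi_{s,r,\a}$ is compatible with $k>s+1/p_1-1$ --- is not a verification detail; it is a genuine gap, and the inequality you need is in general false. A step $h\in(\d,\l\d]$ of the \emph{inner} difference can only be reduced to steps $\le\d$ by splitting $\D_h^r$ into $(h/\d)^r$ translated differences $\D_u^r$, $u\le \d$, which costs $(h/\d)^{r/p_1}$ against a gain of only $(h/\d)^{-\a}$ from the weight $h^{-\a}$; this yields the exponent $E=s+1/p_1-1+\left(r/p_1-\a\right)$, and it cannot be improved: for $p=\infty$ one computes $\psi_{s,r,\a}(e^{\i\,\cdot},\d)_\infty\asymp\d^{\,s+r-\a}$, so any admissible $E$ is at least $s+r-\a$, which exceeds $s+1/p_1-1=s$ whenever $\a<r$ (for $0<p<1$ the loss persists even when $\a=r$). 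Only in the case $p\ge1$, $\a=r$ is the reduction of the inner step free, since then $\Vert\D_t^s\D_{mu}^r f\Vert_p/(mu)^r\le\Vert\D_t^s\D_u^r f\Vert_p/u^r$, and only there does your argument close under the stated hypothesis.

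Worse, the defect cannot be repaired by invoking (\ref{rathore2}) for the given $f$: the intermediate inequality you aim for (your analog of (\ref{rathInvP})) is itself false for admissible functions. Take $p=\infty$, $r=3$, $\a=1$, $s=1$, $k=2$ and $f(x)=\sum_{j\ge0}2^{-7j/2}e^{\i 2^j x}$. Comparing with Fourier coefficients (lower bounds) and using the triangle inequality (upper bounds) one gets $\psi_{1,3,1}(f,\d)_\infty\asymp\psi_{2,3,1}(f,\d)_\infty\asymp\d^{5/2}$ and $E_\nu(f)_{H_\infty^{3,1}}\asymp\nu^{-5/2}$; thus (\ref{rathore2}) holds with $k=2>s+1/p_1-1=1$, and (\ref{rathore1}) happens to hold as well, yet
$$
\frac1{n^{k}}\sum_{\nu=0}^{n}(\nu+1)^{k-1}E_\nu(f)_{H_\infty^{3,1}}\ge \frac{c}{n^{2}},
\qquad
M\,\psi_{2,3,1}\left(f,\frac1n\right)_\infty\le \frac{C}{n^{5/2}},
$$
so your proposed bound fails for large $n$. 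Equivalently, the true dilation exponent of $\psi_{k,r,\a}(f,\cdot)$ for this $f$ is $5/2>k$, so the factor $Cm^{(k-E)p}-M^p$ in your final step can never be made positive by choosing $m$ large. Hence the difficulty you flagged is not bookkeeping: for $\a<r$, and for all $0<p<1$, the template of Theorem~\ref{th3KP} does not transfer, and handling the large inner steps requires an additional idea that neither your proposal nor the paper's one-line indication supplies.
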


In Theorem~\ref{th3K} we found the lower estimate for $E_n(f)_{H_p^{r,\a}}$
under conditions (\ref{rathore2}) and (\ref{eqM5_33}).
It turns out that for $E_n(f)_{H_p^{r,\a}}$ with $0<p\le 1$ there exists a non-trivial estimate from below for all
$f\in H_p^{r,\a}$ in terms of the special differences.

%
%
%
%


\begin{proposition}\label{pr2}
  If $f\in H_p^{r,\a}$, $0<p\le 1$, $0< \a\le r$, and $r\in\N$, then for all $n\in \N$
$$
n^{1-\frac1p}\sup_{h>0}\frac{\Vert \widetilde{(\D_h^r f)}_n\Vert_p}{h^\a}\le C_pE_n(f)_{H_p^{r,\a}},
$$
where
$$
\widetilde{f}_n(\l)=\frac1{4n+1}\sum_{j=0}^{4n}f(t_j+\l),\quad t_j=t_{j,n}=\frac{2\pi j}{4n+1}.
$$
\end{proposition}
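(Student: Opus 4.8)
The plan is to compare $f$ with its polynomial of best approximation in $H_p^{r,\a}$ and to exploit the fact that the averaging $g\mapsto\widetilde g_n$ annihilates every trigonometric polynomial of degree at most $n$ having vanishing mean value. First I would take $T_n\in\mathcal T_n$ with $\Vert f-T_n\Vert_{H_p^{r,\a}}=E_n(f)_{H_p^{r,\a}}$. The crucial observation is that $\widetilde{(\D_h^r T)}_n\equiv0$ for every $T\in\mathcal T_n$ and every $h>0$. Indeed, writing $T(x)=\sum_{\nu=-n}^n c_\nu e^{\i\nu x}$, the difference $\D_h^r T(x)=\sum_{\nu=-n}^n c_\nu(1-e^{\i\nu h})^r e^{\i\nu x}$ is again in $\mathcal T_n$ and has vanishing constant term (the factor $(1-e^{\i\nu h})^r$ kills $\nu=0$). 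Since $t_j=2\pi j/(4n+1)$, the discrete orthogonality $\frac1{4n+1}\sum_{j=0}^{4n}e^{\i\nu t_j}$ equals $1$ when $4n+1\mid\nu$ and $0$ otherwise, and as $|\nu|\le n<4n+1$ the average over the nodes $t_j$ retains only the (now vanishing) term $\nu=0$; hence $\widetilde{(\D_h^r T)}_n\equiv0$.

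Consequently, for every $h>0$,
$$\widetilde{(\D_h^r f)}_n=\widetilde{(\D_h^r(f-T_n))}_n+\widetilde{(\D_h^r T_n)}_n=\widetilde{(\D_h^r(f-T_n))}_n.$$
Setting $g:=\D_h^r(f-T_n)$ and using $0<p\le1$, the $p$-subadditivity of $t\mapsto|t|^p$ gives
$$|\widetilde g_n(\l)|^p\le\frac1{(4n+1)^p}\sum_{j=0}^{4n}|g(t_j+\l)|^p.$$
Integrating in $\l$ over $\T$ and using that each shift $g(t_j+\cdot)$ has the same $L_p$-quasinorm as $g$ by $2\pi$-periodicity, I obtain
$$\Vert\widetilde g_n\Vert_p^p\le(4n+1)^{1-p}\Vert g\Vert_p^p,\qquad\text{that is}\qquad\Vert\widetilde g_n\Vert_p\le(4n+1)^{\frac1p-1}\Vert g\Vert_p.$$

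Finally I would multiply by $n^{1-1/p}$ and divide by $h^\a$. Since $n^{1-1/p}(4n+1)^{1/p-1}=((4n+1)/n)^{1/p-1}\le5^{1/p-1}=:C_p$, it follows that
$$n^{1-\frac1p}\,\frac{\Vert\widetilde{(\D_h^r f)}_n\Vert_p}{h^\a}\le C_p\,\frac{\Vert\D_h^r(f-T_n)\Vert_p}{h^\a}\le C_p\,|f-T_n|_{H_p^{r,\a}}\le C_p\,E_n(f)_{H_p^{r,\a}},$$
and taking the supremum over $h>0$ yields the assertion. The only genuinely delicate point is the vanishing of $\widetilde{(\D_h^r T)}_n$ for $T\in\mathcal T_n$, which rests on the node orthogonality underlying the very definition of the means $\mathcal L_{n,\l}$; the remaining estimates are routine applications of the $p$-triangle inequality and Fubini, so I anticipate no further obstacle.
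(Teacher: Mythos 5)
Your proof is correct and follows essentially the same route as the paper: the paper also reduces to the best approximant $T_n$ in $H_p^{r,\a}$, uses that the node average $\widetilde{(\,\cdot\,)}_n$ annihilates zero-mean polynomials in $\mathcal{T}_n$ (which $\D_h^r T_n$ automatically is), and applies the $p$-subadditivity bound $\Vert\widetilde g_n\Vert_p\le C n^{1/p-1}\Vert g\Vert_p$. The only cosmetic difference is that the paper packages these two facts through the intermediate quantity $E_n^0(g)_p=\inf\{\Vert g-T\Vert_p : T\in\mathcal{T}_n,\ \int_\T T=0\}$, whereas you inline them directly.
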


\begin{proof}
Let
\begin{equation*}\label{eqA4}
  E_n^0(f)_p=\inf\left\{\Vert f-T\Vert_p\,:\, T\in \mathcal{T}_n,\quad \int_\T T(t){\rm d}t=0\right\}.
\end{equation*}
It turns out that
$$
c_p n^{1-\frac1p}\Vert \widetilde{f}_n\Vert_p\le E_n^0(f)_p\le C_{p}\(n^{1-\frac1p}\Vert \widetilde{f}_n\Vert_p+E_{n/2}(f)_p\).
$$
The estimation from above can be found in~\cite{K}. To estimate $E_n^0(f)_p$ from below let us note that for any $T_n\in \mathcal{T}_n$, $\int_\T T(t){\rm d}t=0$,
we have
$$
\widetilde{f}_n(\l)=\frac1{4n+1}\sum_{j=0}^{4n}(f(t_j+\l)-T_n(t_j+\l)).
$$
Applying this equality, we obtain
$$
\Vert \widetilde{f}_n\Vert_p^p\le \frac{C}{(4n+1)^{p}}\sum_{j=0}^{4n}\int_\T|f(t_j+\l)-T_n(t_j+\l)|^p\mathrm{d}\l\le Cn^{1-p}\Vert f-T_n\Vert_p^p.
$$
Now, let $T_n\in \mathcal{T}_n$, $n\in \N$, be polynomials of the best approximation in $H_p^{r,\a}$.
Thus, from the above inequality we have
$$
n^{1-\frac1p}\sup_{h>0}\frac{\Vert \widetilde{(\D_h^r f)}_n\Vert_p}{h^\a}\le C\sup_{h>0}\frac{E_n^0 (\D_h^r f)_p}{h^\a}
\le C\sup_{h>0}\frac{\Vert \D_h^r f -\D_h^r T_n \Vert_p}{h^\a}\le CE_n(f)_{H_p^{r,\a}},
$$
which proves the proposition.
\end{proof}

\section{Two-sided estimates of approximation by linear polynomial methods in $H_p^{r,\a}$}

To formulate the main theorems  in this section we need some auxiliary notations. For that purpose
let us introduce the general modulus of smoothness.
\begin{definition}
We will say that $w=w(\cdot,\cdot)_p\in \Omega_p=\Omega(L_p,\R_+)$, $0<p\le \infty$, if

\noindent 1) for $f\in L_p$ and for any $\d>0$ we have
\begin{equation}\label{eqM1}
  w(f,\d)_p\le C\Vert f\Vert_p;
\end{equation}
2) for $f,g\in L_p$ and for any $\d>0$ we have
\begin{equation}\label{eqM2}
  w(f+g,\d)_p\le C(w(f,\d)_p+w(g,\d)_p),
  \end{equation}
where $C$ is a constant independent of $f$, $g$, and $\d$.
\end{definition}


As a function $w$ we can take, for example, the classical modulus of smoothness $\w_k(f,\d)_p$ of arbitrary order $k$, or a corresponding
$K$-functional or its realization (see~\cite{DHI}), but one can also use more artificial objects, which were introduced and studied in~\cite{KT}.

\begin{definition}
For a function $w\in \Omega_p$ we define a ''modulus of smoothness'' related to the
H\"older space $H_p^{r,\a}$ as follows
\begin{equation}\label{eqM4}
  w(f,\d)_{H_p^{r,\a}}=w(f,\d)_p+\sup_{h>0}\frac{w(\D_h^r f,\d)_p}{h^\a}.
\end{equation}
\end{definition}

Let us consider some examples. It turns out that if we take $w(f,\d)_p=\Vert f\Vert_p$, then (\ref{eqM4}) defines the norm in H\"older spaces, which was introduced in (\ref{eqI4}). However, if $w(f,\d)_p=\w_k(f,\d)_p$, then formula (\ref{eqM4}) provides the definition of the corresponding modulus of smoothness in H\"older spaces $H_p^{r,\a}$, see the right-hand side of formula (\ref{eqI10}), and if $w(f,1/n)=\Vert f-\mathcal{L}_{n,\l}(f)\Vert_{\overline{p}}$, then in (\ref{eqM4}) we will have formula (\ref{eqMeansH}).

Now we are ready to formulate our main result in this section.

\begin{theorem}\label{th1}
Let $0<p\le \infty$, $0< \a\le r$, and $r\in \N$.  Let $\{\mathcal{L}_{n,\l}\}$ be bounded in $L_p$, ${w}_\mathcal{L} \in \Omega_p$,
and let us assume that the following equivalence holds for any $f\in L_p$ and $n\in \N$:
\begin{equation}\label{eqM5}
  \Vert f-\mathcal{L}_{n,\l}(f)\Vert_{\overline{p}}\asymp w_\mathcal{L}\(f,\frac 1n\)_p.
\end{equation}
Then for any $f\in H_p^{r,\a}$ and $n\in \N$ we have
\begin{equation}\label{eqM6}
  \Vert f-\mathcal{L}_{n,\l}(f)\Vert_{H_{\overline{p}}^{r,\a}}\asymp w_\mathcal{L}\(f,\frac 1n\)_{H_p^{r,\a}}+ \left\{
                                                                                             \begin{array}{ll}
                                                                                               \displaystyle E_n(f)_{H_p^{r,\a}}, & \hbox{$0<p<1$,} \\
                                                                                               \displaystyle 0, & \hbox{$1\le p\le\infty$.}
                                                                                             \end{array}
                                                                                           \right.
\end{equation}
\end{theorem}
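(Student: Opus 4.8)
Theorem~\ref{th1} asserts a two-sided equivalence, so the plan is to prove the upper and lower bounds separately, and in each case to reduce the H\"older-norm statement to the already-established $L_p$ equivalence \eqref{eqM5} together with the auxiliary lemmas of Section~2. The key observation is that the H\"older (quasi-)norm \eqref{eqMeansH} splits as the $\overline{p}$-norm of the error plus the seminorm \eqref{eqMeansH2}, and by the definition \eqref{eqM4} the object $w_\mathcal{L}(f,1/n)_{H_p^{r,\a}}$ splits identically. Thus the $\overline{p}$-norm part of $\Vert f-\mathcal{L}_{n,\l}(f)\Vert_{H_{\overline{p}}^{r,\a}}$ is handled immediately by \eqref{eqM5}, and the whole problem concentrates on the seminorm $|f-\mathcal{L}_{n,\l}(f)|_{H_{\overline{p}}^{r,\a}}$, whose leading term is $\sup_{h>0}h^{-\a}\Vert\D_h^r(f-\mathcal{L}_{n,\l}(f))\Vert_{\overline{p}}$.

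For the \emph{upper} bound I would first invoke Lemma~\ref{lem3XXX} to replace the $\overline{p}$-seminorm by $(\int_\T|f-\mathcal{L}_{n,\l}(f)|_{H_p^{r,\a}}^p\,{\rm d}\l)^{1/p}$, so that everything reduces to fixed-$\l$ quantities. Then, choosing polynomials $T_n\in\mathcal{T}_n$ of best approximation in $H_p^{r,\a}$ and writing $f-\mathcal{L}_{n,\l}(f)=(f-T_n)-\mathcal{L}_{n,\l}(f-T_n)+(T_n-\mathcal{L}_{n,\l}(T_n))$, I would use \eqref{eqlem2pr300} to kill the last bracket in the $L_p$-part and control the first two terms. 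The crucial tool here is Lemma~\ref{lem2}, which bounds $(\int_\T\sup_h\Vert h^{-\a}\D_h^r\mathcal{L}_{n,\l}(f-T_n)\Vert_p^p\,{\rm d}\l)^{1/p}$ by $\sup_{h\ge1/n}h^{-\a}\w_r(f-T_n,h)_p$; this produces, after applying Theorem~A and the definition of best approximation, a term of size $E_n(f)_{H_p^{r,\a}}$ in the range $0<p<1$ and a term absorbed into $w_\mathcal{L}(f,1/n)_{H_p^{r,\a}}$ otherwise. The distinction between $0<p<1$ and $1\le p\le\infty$ will arise precisely because for $p\ge1$ the operator $\mathcal{L}$ is an honest Fourier multiplier and inequality \eqref{eqI9} commutes the difference operator past $\mathcal{L}$ with no polynomial-error penalty, whereas for $p<1$ no such multiplier property is available and the residual $E_n(f)_{H_p^{r,\a}}$ cannot be removed.

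For the \emph{lower} bound the argument is more direct: from the elementary inequality $\Vert\D_h^r(f-\mathcal{L}_{n,\l}(f))\Vert_{\overline{p}}\ge$ (a constant times) $\Vert\D_h^r f-\mathcal{L}_{n,\l}(\D_h^r f)\Vert_{\overline{p}}$ combined with \eqref{eqM5} applied to $\D_h^r f$, one recovers $w_\mathcal{L}(\D_h^r f,1/n)_p$, and dividing by $h^\a$ and taking the supremum reproduces the seminorm part of $w_\mathcal{L}(f,1/n)_{H_p^{r,\a}}$. The $E_n(f)_{H_p^{r,\a}}$ lower bound in the range $0<p<1$ should follow from Proposition~\ref{pr2} (or from the fact that $\Vert f-\mathcal{L}_{n,\l}(f)\Vert_{H_{\overline{p}}^{r,\a}}$ dominates a best-approximation quantity), since $\mathcal{L}_{n,\l}(f)$ is itself a trigonometric polynomial of order $n$ for each $\l$, giving $\Vert f-\mathcal{L}_{n,\l}(f)\Vert_{H_{p}^{r,\a}}\ge E_n(f)_{H_p^{r,\a}}$ for each $\l$ and hence the same after averaging.

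The main obstacle I expect is the careful bookkeeping in the upper bound for $0<p<1$, where the quasi-norm forces the use of the $p_1$-triangle inequality and where one must verify that the error term generated by Lemma~\ref{lem2} is genuinely of the order $E_n(f)_{H_p^{r,\a}}$ rather than something larger. Specifically, after the decomposition one obtains a contribution $\sup_{h\ge1/n}h^{-\a}\w_r(f-T_n,h)_p$, and reconciling this with $E_n(f)_{H_p^{r,\a}}$ requires Theorem~A together with the observation that for $h\ge1/n$ the ratio $h^{-\a}\w_r(f-T_n,h)_p$ is controlled by $n^\a\Vert f-T_n\Vert_p\le E_n(f)_{H_p^{r,\a}}$ via Lemma~\ref{lem4}. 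Getting the two ranges of $p$ to merge cleanly, and confirming that the $E_n$ term truly vanishes (not merely is dominated) when $p\ge1$, is where the proof demands the most attention.
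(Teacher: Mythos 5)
Your upper-bound sketch is essentially the paper's argument: decomposing $f-\L_{n,\l}(f)$ through a near-best polynomial $T_n$, commuting $\D_h^r$ with $\L_{n,\l}$ on polynomials, and combining (\ref{eqM5}) and (\ref{eqM1})--(\ref{eqM2}) with Lemma~\ref{lem2} is exactly how the paper arrives at (\ref{eqth1pr9})--(\ref{eqth1pr14}); likewise your observation that $E_n(f)_{H_p^{r,\a}}\le \Vert f-\L_{n,\l}(f)\Vert_{H_{\overline{p}}^{r,\a}}$ is the paper's (\ref{eqth1pr1}).

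The genuine gap is in your lower bound for $0<p<1$. The inequality you call elementary, $\Vert \D_h^r(f-\L_{n,\l}(f))\Vert_{\overline{p}}\ge c\,\Vert \D_h^r f-\L_{n,\l}(\D_h^r f)\Vert_{\overline{p}}$, amounts to controlling the commutator $\L_{n,\l}(\D_h^r f)-\D_h^r\L_{n,\l}(f)$, and for the sampling means (\ref{eqI14}) this commutator does not vanish: a translation of $f$ shifts the sampling parameter together with the argument, $\L_{n,\l}(f(\cdot+h),x)=\L_{n,\l+h}(f,x+h)$, so $\D_h^r$ passes through $\L_{n,\l}$ only on $\mathcal{T}_n$, where $\L_{n,\l}$ coincides with the convolution operator $\L_n$ by (\ref{eqlem2pr300}). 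For $1\le p\le\infty$ one may replace $\L_{n,\l}$ by $\L_n$ via Lemma~\ref{lem3}, and for $\L_n$ the commutator is identically zero --- that is precisely the paper's proof in that range (note it is $\L_n$, not each $\L_{n,\l}$, that is a Fourier multiplier, so your appeal to (\ref{eqI9}) needs Lemma~\ref{lem3} as an intermediary). For $0<p<1$, however, no such reduction exists: there are no nontrivial Fourier multipliers on $L_p$, and the paper shows via the function (\ref{eqCondModR}) that even the averaged analogue of (\ref{eqI9}) for families fails. Establishing your ``elementary'' inequality (in integrated form) is in fact the core of the paper's proof: it inserts $\L_{n,\l}(f)$ by (\ref{eqM1})--(\ref{eqM2}), applies (\ref{eqM5}) to the polynomial $\D_h^r\L_{n,\l}(f)$ with a second, independent parameter $\b$, uses the polynomial commutation (\ref{eqth1pr5}) together with the operator identity (\ref{eqth1pr6}), and bounds the resulting composite term $\L_{n,\b}(f-\L_{n,\l}(f))$ by Lemma~\ref{lem2} and Lemma~\ref{lem3XXX}. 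None of this machinery can be bypassed by your one-line claim, so the seminorm part of the lower bound for $0<p<1$ --- which is the substantive content of the theorem --- remains unproved in your proposal.
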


\begin{proof}
We start from the case $0<p<1$.

Let us first prove the lower bound for $\Vert f-\L_{\l,n}(f)\Vert_{H_{\overline{p}}^{r,\a}}$.
It is evident that
\begin{equation}\label{eqth1pr1}
   E_n(f)_{H_p^{r,\a}}\le \Vert f-\L_{\l,n}(f)\Vert_{H_{\overline{p}}^{r,\a}}.
\end{equation}
Thus, by (\ref{eqth1pr1}) and (\ref{eqM5}) we only need to prove that
\begin{equation}\label{eqth1pr2}
  \sup_{h>0}\frac{w_\L(\D_h^r f,1/n)_p}{h^\a}
\le \Vert f-\L_{\l,n}(f)\Vert_{H_{\overline{p}}^{r,\a}}.
\end{equation}
By properties (\ref{eqM1}) and (\ref{eqM2}) we obtain
\begin{equation}\label{eqth1pr3}
\begin{split}
  w_\L(\D_h^r f,1/n)_p^p&=\frac1{2\pi}\int_{\T} w_\L(\D_h^r f,1/n)_p^p \mathrm{d}\l \\
&\le C\left(\Vert \D_h^r (f-\L_{n,\l}(f))\Vert_{\overline{p}}^p+\int_{\T} w_\L(\D_h^r \mathcal{L}_{n,\l}(f),1/n)_p^p \mathrm{d}\l\right)
\end{split}
\end{equation}
and, by (\ref{eqM5}) we arrive at
\begin{equation}\label{eqth1pr4}
\begin{split}
\int_{\T} w_\mathcal{L}(\D_h^r \L_{n,\l}(f),1/n)_p^p \mathrm{d}\l \le C\int_\T \int_\T \Vert \D_h^r \L_{n,\l}(f)-\L_{n,\b}(\D_h^r \L_{n,\l}(f))\Vert_p^p \mathrm{d}\l \mathrm{d}\b.
\end{split}
\end{equation}
Note that for any $\l,\b\in \R$ we have
\begin{equation}\label{eqth1pr5}
\L_{n,\b} (\D_h^r \L_{n,\l} (f))=\D_h^r \L_{n,\b}(\L_{n,\l}(f))
\end{equation}
and
\begin{equation}\label{eqth1pr6}
\L_{n,\l}-\L_{n,\b}\circ \L_{n,\l}=(\L_{n,\l}-I)+(I-\L_{n,\b})+\L_{n,\b}\circ (I-\L_{n,\l}),
\end{equation}
where $I$ is the identity operator.

Thus, by (\ref{eqth1pr4}), (\ref{eqth1pr5}) and (\ref{eqth1pr6}), we obtain
\begin{equation}\label{eqth1pr7}
\begin{split}
&\int_{\T} w_\mathcal{L}(\D_h^r \L_{n,\l}(f),1/n)_p^p \mathrm{d}\l
\le C\bigg(\Vert \D_h^r (\L_{n,\l}(f)-f)\Vert_{\overline{p}}^p\\&+\Vert \D_h^r (f-\L_{n,\b}(f))\Vert_{\overline{p}}^p+
\int_\T \int_\T \Vert \D_h^r \L_{n,\b}(f- \L_{n,\l}(f))\Vert_p^p \mathrm{d}\l \mathrm{d}\b    \bigg).
\end{split}
\end{equation}
By using Lemma~\ref{lem2} and Lemma~\ref{lem3XXX}, we also conclude
\begin{equation}\label{eqth1pr8}
\begin{split}
  \sup_{h>0}\int_\T \int_\T &\frac{\Vert \D_h^r \L_{n,\b}(f- \L_{n,\l}(f))\Vert_p^p}{h^{\a p}} \mathrm{d}\l \mathrm{d}\b \le \int_\T \int_\T  |\L_{n,\b}(f- \L_{n,\l}(f))|_{H_p^{r,\a}}^p \mathrm{d}\b \mathrm{d}\l\\
  &\le C \int_\T \sup_{h\ge 1/n}\frac{\w_r(f- \L_{n,\l}(f),h)_p^p }{h^{\a p}} \mathrm{d}\l\\
  &\le C \int_\T |f- \L_{n,\l}(f)|_{H_p^{r,\a}}^p \mathrm{d}\l\le C |f- \L_{n,\l}(f)|_{H_{\overline{p}}^{r,\a}}^p.
\end{split}
\end{equation}
Thus, combining (\ref{eqth1pr7}) and (\ref{eqth1pr8}), we have
\begin{equation*}
\sup_{h>0}\int_\T \frac{w_\L(\D_h^r \L_{n,\l}(f),1/n)_p^p}{h^{\a p}} \mathrm{d}\l \le C \vert f-\L_{\l,n}(f)\vert_{H_{\overline{p}}^{r,\a}}^p.
\end{equation*}
The last inequality together with (\ref{eqth1pr3}) implies (\ref{eqth1pr2}).

Now, let us prove the upper bound. It is sufficient to prove that
\begin{equation}\label{eqth1pr9}
|f-\L_{n,\l}(f)|_{H_{\overline{p}}^{r,\a}}\le C\left(\sup_{h>0}\frac{w_\L(\D_h^r f, 1/n)_p}{h^\a}+
\inf_{T\in \mathcal{T}_n}|f-T|_{H_{p}^{r,\a}}   \right).
\end{equation}
Let $T_n \in \mathcal{T}_n$ be an arbitrary polynomial.
Then
\begin{equation}\label{eqth1pr10}
\begin{split}
\Vert \D_h^r(f-\L_{n,\l}(f))\Vert_{\overline{p}}&\le C\(\Vert \D_h^r (f-T_n)\Vert_p+\Vert \D_h^r (T_n-\L_{n,\l}(T_n))\Vert_{\overline{p}}
+\Vert \D_h^r \L_{n,\l} (f-T_n)\Vert_{\overline{p}}\).
\end{split}
\end{equation}
By Lemma~\ref{lem2}, we have
\begin{equation}\label{eqth1pr11}
|\L_{n,\l} (f-T_n)|_{H_{\overline{p}}^{r,\a}} \le C |f-T_n|_{H_{p}^{r,\a}}.
\end{equation}
Note also that
\begin{equation}\label{eqth1pr12}
\D_h^r \L_{n,\l}(T_n)=\L_{n,\l}(\D_h^r T_n).
\end{equation}
Thus, by using (\ref{eqth1pr12}) and (\ref{eqM5}), we have for any $h>0$
\begin{equation}\label{eqth1pr13}
\Vert \D_h^r (T_n-\L_{n,\l}(T_n))\Vert_{\overline{p}}=\Vert \D_h^r T_n-\L_{n,\l}(\D_h^r T_n))\Vert_{\overline{p}}\le C w_\L(\D_h^r T_n ,1/n)_p
\end{equation}
and, by using (\ref{eqM1}) and (\ref{eqM2}), we obtain
\begin{equation}\label{eqth1pr14}
w_\L(\D_h^r T_n ,1/n)_p\le C\(w_\L(\D_h^r f ,1/n)_p+\Vert \D_h^r (f-T_n)\Vert_p\).
\end{equation}

Combining (\ref{eqth1pr10}), (\ref{eqth1pr11}), (\ref{eqth1pr13}), and (\ref{eqth1pr14}), we get (\ref{eqth1pr9}).

To prove the theorem in the case $1\le p\le \infty$ one can take into account Lemma~\ref{lem3} and the equality
$$
\Vert \D_h^r (f-\L_n(f))\Vert_p=\Vert \D_h^r f-\L_n(\D_h^r f)\Vert_p,
$$
which yields the desired result.
\end{proof}

By using the same arguments as in the above proof of Theorem~\ref{th1} one can easily show the following theorem for $p\ge 1$.
\begin{theorem}\label{th2}
Let $1\le p\le \infty$,  $0< \a\le r$, and $r\in \N$.  Let $\{\L_{n}\}$ be bounded in $L_p$, $w_\L \in \Omega_p$,
and let us assume that the following equivalence holds for any $f\in L_p$ and $n\in \N$:
\begin{equation*}
  \Vert f-\L_{n}(f)\Vert_{p}\asymp w_\L\(f,\frac 1n\)_p.
\end{equation*}
Then for any $f\in H_p^{r,\a}$ and $n\in \N$ we have
\begin{equation*}
  \Vert f-\L_{n}(f)\Vert_{H_{p}^{r,\a}}\asymp w_\L\(f,\frac 1n\)_{H_p^{r,\a}}.
\end{equation*}
\end{theorem}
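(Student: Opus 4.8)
The plan is to adapt the $1\le p\le\infty$ portion of the proof of Theorem~\ref{th1} to the present single-operator setting, where the averaging over $\l$ and hence the use of Lemma~\ref{lem3} are no longer needed, so the argument becomes completely direct. Everything hinges on one structural fact: since $\mathcal{L}_n$ is a Fourier means of the form (\ref{eqI1}), it is a convolution operator and therefore commutes with translates, and consequently with the finite difference operator, giving
$$
\D_h^r \mathcal{L}_n(f)=\mathcal{L}_n\(\D_h^r f\),\qquad h>0,
$$
exactly as in (\ref{eqI9}).

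First I would expand both sides of the asserted equivalence using the definitions (\ref{eqI4}) and (\ref{eqM4}),
$$
\Vert f-\mathcal{L}_n(f)\Vert_{H_p^{r,\a}}=\Vert f-\mathcal{L}_n(f)\Vert_p+\sup_{h>0}\frac{\Vert \D_h^r\(f-\mathcal{L}_n(f)\)\Vert_p}{h^\a},
$$
$$
w_\L\(f,\tfrac1n\)_{H_p^{r,\a}}=w_\L\(f,\tfrac1n\)_p+\sup_{h>0}\frac{w_\L\(\D_h^r f,\tfrac1n\)_p}{h^\a},
$$
and then match the two pairs of summands separately. For the $L_p$-terms the hypothesis applied to $f$ itself gives at once $\Vert f-\mathcal{L}_n(f)\Vert_p\asymp w_\L(f,1/n)_p$.

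For the seminorm terms I would fix $h>0$, use the commutation above to write $\D_h^r\(f-\mathcal{L}_n(f)\)=\D_h^r f-\mathcal{L}_n\(\D_h^r f\)$, and apply the hypothesis to the function $g=\D_h^r f$, which lies in $L_p$ because $f\in L_p$ and $\D_h^r f$ is a finite combination of translates of $f$. This yields
$$
\Vert \D_h^r\(f-\mathcal{L}_n(f)\)\Vert_p=\Vert g-\mathcal{L}_n(g)\Vert_p\asymp w_\L\(g,\tfrac1n\)_p=w_\L\(\D_h^r f,\tfrac1n\)_p
$$
with equivalence constants that are the absolute constants furnished by the hypothesis and hence independent of $h$. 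Dividing by $h^\a$ and passing to $\sup_{h>0}$ then preserves the two-sided bound, so the seminorm terms are equivalent as well; adding the two term-by-term equivalences gives the claim.

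I do not anticipate a genuine obstacle, which is precisely why the theorem is stated as following ``easily'' from the proof of Theorem~\ref{th1}. The only points deserving a line of justification are the commutation $\D_h^r\mathcal{L}_n=\mathcal{L}_n\D_h^r$, immediate from the translation invariance of (\ref{eqI1}) as already recorded in (\ref{eqI9}), and the uniformity in $h$ of the constants in the assumed equivalence, which is exactly what legitimizes taking the supremum; the case $p=\infty$ needs only the customary replacement of the norm by the maximum and is otherwise identical.
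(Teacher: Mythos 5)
Your proof is correct and takes essentially the same approach as the paper: the paper's argument for Theorem~\ref{th2} is precisely the observation (made at the end of the proof of Theorem~\ref{th1}, where the $\l$-averaging and Lemma~\ref{lem3} are needed only for the family version) that the commutation $\D_h^r (f-\L_n(f))=\D_h^r f-\L_n(\D_h^r f)$ allows one to apply the assumed equivalence to $\D_h^r f$ with constants uniform in $h$, and then take the supremum. You merely spell out the term-by-term matching that the paper leaves implicit.
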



In the case $0<p<1$ the problem is more complicated, which is based on the difficulty of  sharp estimates for $E_n(f)_{H_p^{r,\a}}$. But some relationship can be presented.

By using Theorem~\ref{th1}, Theorem~\ref{th3}, and Proposition~\ref{pr2} we can prove the following result.

\begin{corollary}\label{cor1}
Let $0<p<1$, $0<\a\le r$, $r,k\in \N$, and $w_\L \in \Omega_p$. Suppose also that $\{\L_{n,\l}\}$ is bounded in $L_p$
and that the following equivalence holds for any $f\in L_p$ and $n\in \N$:
\begin{equation}\label{eqM5_1}
  \Vert f-\L_{n,\l}(f)\Vert_{\overline{p}}\asymp w_\L\(f,\frac 1n\)_p.
\end{equation}
Then for any $f\in H_p^{r,\a}$ and $n\in \N$ we have the following two-sided estimate:
\begin{equation}\label{eqM6_1}
w_\L\(f,\frac 1n\)_{H_p^{r,\a}}+n^{1-\frac1p}\sup_{h>0}\frac{\Vert \widetilde{(\D_h^r f)}_n\Vert_p}{h^\a}\le  C\Vert f-\L_{n,\l}(f)\Vert_{H_{\overline{p}}^{r,\a}},
\end{equation}
\begin{equation}\label{eqM6_2}
C\Vert f-\L_{n,\l}(f)\Vert_{H_{\overline{p}}^{r,\a}}\le  w_\L\(f,\frac 1n\)_{H_p^{r,\a}}+\(\int_0^{1/n}\(\frac{\w_{r+k}(f,t)_p}{t^\a}\)^{p}\frac{\mathrm{d}t}{t}\)^\frac1{p}.
\end{equation}
\end{corollary}

\begin{remark}
Note that the second term in the right-hand side of (\ref{eqM6_1})--(\ref{eqM6_2}) can be replaced by $\theta_{r,\a}(f,1/n)_p$ (see Theorem~\ref{cor2}) or by
$\psi_{k,r,\a}(f,1/n)_p$, $k\in\N$, if $r>\a$ or if condition (\ref{eqM5_33}) holds for the function $f$  (see Theorem~\ref{th3} and Corollary~\ref{corth3}).
\end{remark}

For some classes of functions and for the classical moduli of smoothness one can also obtain sharp two-sided estimates.
Indeed, by using Theorem~\ref{th1} and Corollary~\ref{corth3} and taking into account that $\psi_{k,r,\a}(f,h)_p\le \w_k\(f,h\)_{H_p^{r,\a}}$, we get the following result.

\begin{corollary}\label{cor1_1}
Let $0<p<1$, $0<\a\le r$, $r\in \N$, and $k\in \N$. Suppose also that $\{\L_{n,\l}\}$ is bounded in $L_p$
and that the following equivalence holds for any $f\in L_p$ and $n\in \N$:
\begin{equation*}
  \Vert f-\L_{n,\l}(f)\Vert_{\overline{p}}\asymp \w_k\(f,\frac 1n\)_p.
\end{equation*}
If $r>\a$ or $f\in L_p$ satisfy condition  (\ref{eqM5_33}), then
we have the following equivalence:
\begin{equation*}
\Vert f-\L_{n,\l}(f)\Vert_{H_{\overline{p}}^{r,\a}}\asymp  \w_k\(f,\frac 1n\)_{H_p^{r,\a}}.
\end{equation*}
\end{corollary}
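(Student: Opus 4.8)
The plan is to obtain the equivalence by specializing Theorem~\ref{th1} to the classical modulus of smoothness and then removing the extra best-approximation term that appears for $0<p<1$ by means of Corollary~\ref{corth3}.

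First I would observe that $w_\L=\w_k(\cdot,\cdot)_p$ belongs to $\Omega_p$: properties (\ref{eqM1}) and (\ref{eqM2}) are precisely the boundedness and subadditivity of the classical moduli, as noted right after the definition of $\Omega_p$. Since by hypothesis $\Vert f-\L_{n,\l}(f)\Vert_{\overline{p}}\asymp \w_k(f,1/n)_p$, Theorem~\ref{th1} applies with this choice of $w_\L$ and gives, for $0<p<1$,
\begin{equation*}
\Vert f-\L_{n,\l}(f)\Vert_{H_{\overline{p}}^{r,\a}}\asymp \w_k\(f,\frac1n\)_{H_p^{r,\a}}+E_n(f)_{H_p^{r,\a}}.
\end{equation*}
The lower half of this equivalence already yields $\w_k(f,1/n)_{H_p^{r,\a}}\le C\Vert f-\L_{n,\l}(f)\Vert_{H_{\overline{p}}^{r,\a}}$, so it remains only to show, under the hypotheses of the corollary, that the summand $E_n(f)_{H_p^{r,\a}}$ is controlled by $\w_k(f,1/n)_{H_p^{r,\a}}$, after which the upper half collapses to the desired one-term bound.

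For this absorption step I would invoke Corollary~\ref{corth3}, whose hypotheses hold in exactly our two cases: for $r>\a$ no extra assumption is needed, while for $\a=r$ we have imposed condition (\ref{eqM5_33}). It gives $E_n(f)_{H_p^{r,\a}}\le C\,\psi_{k,r,\a}(f,1/n)_p$. Combining this with the elementary inequality
\begin{equation*}
\psi_{k,r,\a}(f,\d)_p=\sup_{0<h\le\d}\frac{\w_k(\D_h^r f,\d)_p}{h^\a}\le \sup_{h>0}\frac{\w_k(\D_h^r f,\d)_p}{h^\a}\le \w_k(f,\d)_{H_p^{r,\a}},
\end{equation*}
where the last step uses the definition (\ref{eqM4}) with $w=\w_k$, yields $E_n(f)_{H_p^{r,\a}}\le C\,\w_k(f,1/n)_{H_p^{r,\a}}$. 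Substituting this into the equivalence above reduces both sides to $\w_k(f,1/n)_{H_p^{r,\a}}$ and proves the corollary.

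The conceptually delicate point is the endpoint case $\a=r$, where $\theta_{r,r}$-type quantities do not control the best approximation $E_n(f)_{H_p^{r,\a}}$ and the extra term genuinely need not vanish. This is exactly why the absorption must be routed through $\psi_{k,r,\a}$ together with condition (\ref{eqM5_33}), rather than through $\theta_{r,\a}$; for $r>\a$ the argument is a direct substitution of the cited results, so no new estimate is required.
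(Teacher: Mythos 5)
Your proposal is correct and follows essentially the same route as the paper: specialize Theorem~\ref{th1} to $w_\L=\w_k(\cdot,\cdot)_p$, then absorb the extra term $E_n(f)_{H_p^{r,\a}}$ via Corollary~\ref{corth3} together with the inequality $\psi_{k,r,\a}(f,h)_p\le \w_k(f,h)_{H_p^{r,\a}}$, which is precisely the chain of results the paper cites before stating the corollary. Your identification of the endpoint case $\a=r$ (where condition (\ref{eqM5_33}) is needed and $\theta_{r,\a}$ would fail) as the delicate point also matches the paper's discussion.
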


\begin{remark}
  Note that by Lemma~\ref{lem3XXX} one can replace $\Vert f-\L_{n,\l}(f)\Vert_{H_{\overline{p}}^{r,\a}}$ by $\(\int_\T |f-\mathcal{L}_{n,\,\lambda}(f)|_{H_{{p}}^{r,\a}}^p{\rm d}\l\)^{1/p}$ in Theorem~\ref{th1}, Corollary~\ref{cor1}, and Corollary~\ref{cor1_1}.
\end{remark}

\section{Some corollaries and concluding remarks}

Let us now discuss two-sided inequalities like (\ref{eqM6}) and (\ref{eqM5_1}) for $\theta_{r,\a}(f,\d)_p$. It turns out that in terms of $\theta_{r,\a}(f,\d)_p$ such inequalities, in general, do not hold. However, we can prove the following results, which can be of interest for some particular functions $f$.

\begin{theorem}\label{thR5}
  Let $0<p\le \infty$, $0\le\a\le r$, and $r\in \N$.
Suppose also that $\{\L_{n,\l}\}$ is bounded in $L_p$
and that the following equivalence holds for any $f\in L_p$ and $n\in \N$:
\begin{equation*}
  \Vert f-\L_{n,\l}(f)\Vert_{\overline{p}}\asymp \w_r\(f,\frac 1n\)_p.
\end{equation*}
Then the following equivalence holds for any $f\in H_p^{r,\a}$ and $n\in \N$:
\begin{equation}\label{eqR5}
  n^\a\w_r\(f,\frac1n\)_p+\Vert f-\L_{n,\l}(f)\Vert_{H_{\overline{p}}^{r,\a}}\asymp \theta_{r,\a}\(f,\frac 1n\)_p.
\end{equation}
\end{theorem}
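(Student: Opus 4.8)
The plan is to deduce this from Theorem~\ref{th1} and then reduce everything to a single equivalence between moduli of smoothness. The hypothesis says precisely that $w_\L=\w_r$ satisfies (\ref{eqM5}) (and note $\w_r\in\Omega_p$), so Theorem~\ref{th1} applies and gives
\[
\Vert f-\L_{n,\l}(f)\Vert_{H_{\overline p}^{r,\a}}\asymp\w_r\(f,\frac1n\)_p+Q+\begin{cases}E_n(f)_{H_p^{r,\a}},&0<p<1,\\ 0,&1\le p\le\infty,\end{cases}
\]
where $Q:=\sup_{h>0}h^{-\a}\w_r(\D_h^r f,1/n)_p$. The case $\a=0$ is immediate from (\ref{eqM5}) and (\ref{svmod1}), so assume $0<\a\le r$. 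Since $\w_r(f,1/n)_p\le n^\a\w_r(f,1/n)_p$ and, for $0<p<1$, $E_n(f)_{H_p^{r,\a}}\le C\t_{r,\a}(f,1/n)_p$ by Theorem~\ref{cor2} with $k=r$, the assertion (\ref{eqR5}) reduces to the two-sided estimate $n^\a\w_r(f,1/n)_p+Q\asymp\t_{r,\a}(f,1/n)_p$.

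For the upper bound I argue termwise. First, $n^\a\w_r(f,1/n)_p$ is the value of $h^{-\a}\w_r(f,h)_p$ at $h=1/n$, hence $\le\t_{r,\a}(f,1/n)_p$. For $Q$ I split at $h=1/n$: by (\ref{svmod1}), $\w_r(\D_h^r f,1/n)_p\le 2^{r/p_1}\Vert\D_h^r f\Vert_p\le 2^{r/p_1}\w_r(f,h)_p$, which for $h\le1/n$ yields $h^{-\a}\w_r(\D_h^r f,1/n)_p\le C\t_{r,\a}(f,1/n)_p$; for $h>1/n$, commuting $\D_h^r\D_t^r=\D_t^r\D_h^r$ and applying (\ref{svmod1}) in the second factor gives $\w_r(\D_h^r f,1/n)_p\le 2^{r/p_1}\w_r(f,1/n)_p$, whence $h^{-\a}\w_r(\D_h^r f,1/n)_p\le Cn^\a\w_r(f,1/n)_p\le C\t_{r,\a}(f,1/n)_p$. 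Thus $Q\le C\t_{r,\a}(f,1/n)_p$.

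The lower bound $\t_{r,\a}(f,1/n)_p\le C(n^\a\w_r(f,1/n)_p+Q)$ is the heart of the matter. Since $\t_{r,\a}(f,1/n)_p=\sup_{0<h\le1/n}h^{-\a}\Vert\D_h^r f\Vert_p$, I fix $h\le1/n$ and, with the best $L_p$-approximant $T_{2^m}$ for $2^{m-1}\le n<2^m$, split $\D_h^r f=\D_h^r T_{2^m}+\D_h^r(f-T_{2^m})$. For the polynomial part, Theorem~B (comparing the steps $h$ and $1/n$) gives $\Vert\D_h^r T_{2^m}\Vert_p\le C(hn)^r\Vert\D_{1/n}^r T_{2^m}\Vert_p\le C(hn)^r\w_r(f,1/n)_p$ (the last step by Theorem~A), so $h^{-\a}\Vert\D_h^r T_{2^m}\Vert_p\le Cn^\a\w_r(f,1/n)_p$ because $r\ge\a$ and $h\le1/n$. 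For the tail $f-T_{2^m}=\sum_{\nu\ge m}U_{2^\nu}$, $U_{2^\nu}=T_{2^{\nu+1}}-T_{2^\nu}$, I apply Theorem~B blockwise (comparing the steps $h$ and $2^{-\nu}$) together with $\Vert U_{2^\nu}\Vert_p\le CE_{2^\nu}(f)_p$ and the link to $Q$,
\[
E_{2^\nu}(f)_p\le C\w_{2r}(f,2^{-\nu})_p\le C\,2^{-\nu\a}\t_{2r,\a}\(f,\frac1n\)_p\le C\,2^{-\nu\a}Q\qquad(\nu\ge m),
\]
where $\t_{2r,\a}(f,1/n)_p\le Q$ by (\ref{eqPropNewM}). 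Summing the resulting two geometric ($p_1$-)series over the blocks with $2^{-\nu}\ge h$ and with $2^{-\nu}<h$ then yields $h^{-\a}\Vert\D_h^r(f-T_{2^m})\Vert_p\le CQ$, and the lower bound follows.

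The one delicate point, which I expect to be the main obstacle, is the endpoint $\a=r$: there the exponent $(r-\a)p_1$ of the increasing geometric series vanishes, the sum degenerates into $\approx\log_2(1/(nh))$ equal terms, and the argument loses a spurious logarithmic factor — the familiar Marchaud-type breakdown at the critical index. To avoid it I would not route the low blocks through the lossy bound $E_{2^\nu}(f)_p\le C2^{-\nu r}Q$, but instead exploit the full mixed-difference information in $Q$, namely $\Vert\D_t^r\D_h^r f\Vert_p\le h^rQ$ for all $t\le1/n$ (equivalently, $\D_{1/n}^r f\in H_p^{r,r}$ with seminorm $\le CQ$), charging the low-frequency blocks to the boundary term $n^r\w_r(f,1/n)_p$ and only the genuinely high blocks to $Q$; for $1\le p\le\infty$ this endpoint can alternatively be settled via the description of $H_p^{r,r}$ in Lemma~\ref{lemHolderD}. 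Finally, the case $1\le p\le\infty$ of the theorem is throughout simpler, the $E_n$-term being absent and $\D_h^r$ commuting with the translation-invariant mean $\L_n$ (Lemma~\ref{lem3}).
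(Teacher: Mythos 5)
Your reduction via Theorem~\ref{th1} and your treatment of the range $0<\a<r$ are correct, but there is a genuine gap at the endpoint $\a=r$, and it is created by your very first step. By discarding the term $E_n(f)_{H_p^{r,\a}}$ that Theorem~\ref{th1} supplies for $0<p<1$, you commit yourself to the strictly stronger inequality $\t_{r,\a}(f,1/n)_p\le C\bigl(n^\a\w_r(f,1/n)_p+Q\bigr)$, $Q=\sup_{h>0}h^{-\a}\w_r(\D_h^r f,1/n)_p$. Your dyadic Bernstein argument proves this when $\a<r$ (both geometric series converge), but at $\a=r$ it degenerates into $\approx\log_2(1/(nh))$ equal terms, as you admit. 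This is not a technicality of your write-up: since $\psi_{r,r,r}(f,1/n)_p\le Q\le C\bigl(n^r\w_r(f,1/n)_p+\psi_{r,r,r}(f,1/n)_p\bigr)$, and since $E_n(f)_{H_p^{r,r}}\le C\t_{r,r}(f,1/n)_p$ by Theorem~\ref{cor2}, your endpoint claim is equivalent (given the paper's results) to the bound $E_n(f)_{H_p^{r,r}}\le C\bigl(n^r\w_r(f,1/n)_p+\psi_{r,r,r}(f,1/n)_p\bigr)$ for $0<p<1$, i.e.\ to a strengthening of Corollary~\ref{corth3} at $\a=r$ without condition (\ref{eqM5_33}) --- exactly the regime the authors single out, saying they do not know whether such a $\psi$-bound is possible and are inclined to believe the answer is negative. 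Your proposed repair (``charging the low-frequency blocks to the boundary term'') is an unexecuted idea, not an argument, and your fallback via Lemma~\ref{lemHolderD} for $1\le p\le\infty$ does not cover $p=\infty$, where $H_\infty^{r,r}$ is a Zygmund-type class outside the scope of that lemma. So the theorem remains unproved in its hardest case.

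The paper avoids the endpoint entirely by never throwing $E_n(f)_{H_p^{r,\a}}$ away. In its proof of the lower bound, $T_n$ is a polynomial of best approximation of $f$ in the $H_p^{r,\a}$-(quasi)norm, not in $L_p$; then for $h\le 1/n$ one estimates $h^{-\a}\Vert\D_h^r f\Vert_p$ by $C\bigl(h^{-\a}\Vert\D_h^r(f-T_n)\Vert_p+h^{-\a}\Vert\D_h^r T_n\Vert_p\bigr)$, where the first term is $\le|f-T_n|_{H_p^{r,\a}}=E_n(f)_{H_p^{r,\a}}\le C\Vert f-\L_{n,\l}(f)\Vert_{H_{\overline{p}}^{r,\a}}$ by the mere definition of the seminorm, and the second is handled by Theorem~B: $h^{-\a}\Vert\D_h^r T_n\Vert_p\le Cn^\a\Vert\D_{1/n}^r T_n\Vert_p\le Cn^\a\Vert\D_{1/n}^r(f-T_n)\Vert_p+Cn^\a\w_r(f,1/n)_p$, the middle quantity being absorbed into $E_n(f)_{H_p^{r,\a}}$ again. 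This single display yields $\t_{r,\a}(f,1/n)_p\le C\bigl(\Vert f-\L_{n,\l}(f)\Vert_{H_{\overline{p}}^{r,\a}}+n^\a\w_r(f,1/n)_p\bigr)$ uniformly for all $0<p\le\infty$ and $0<\a\le r$, with no dyadic decomposition and no endpoint pathology. If you retain the $E_n(f)_{H_p^{r,\a}}$ term in your reduction and add this observation, your (correct) upper-bound half completes the proof; on your present route, finishing the case $\a=r$ requires resolving a question the authors themselves left open.
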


Before proving Theorem~\ref{thR5} let us note that the estimate from above (\ref{eqR5}) is a simple corollary from Theorem~\ref{th1} and Theorem~\ref{cor2}. Concerning the estimate from below it turns out that these estimates do not hold without the first term in the left-hand side of (\ref{eqR5})  (see Proposition~\ref{pr3} below).

\begin{proof}
As it was mentioned above it is sufficient only to prove the estimation from below.
Let $h\in (0,1/n)$ be fixed and $T_{n}\in \mathcal{T}_n$, $n\in\N$, be polynomials of the best approximation in $H_p^{r,\a}$.
By using Theorem~B, we obtain
\begin{equation*}
  \begin{split}
      h^{-\a}\Vert \D_h^r f\Vert_p&\le C h^{-\a} \(\Vert \D_h^r (f-T_{n})\Vert_p+\Vert \D_h^r T_{n}\Vert_p\)\\
     &\le C \(h^{-\a}\Vert \D_h^r (f-T_{n})\Vert_p+n^\a\Vert \D_{1/n}^r T_{n}\Vert_p\)\\
     &\le C \(h^{-\a}\Vert \D_h^r (f-T_{n})\Vert_p+n^\a\Vert \D_{1/n}^r(f-T_n)\Vert_p +n^\a\Vert \D_{1/n}^r f\Vert_p\)\\
     &\le C \(\Vert  f-T_n\Vert_{H_{{p}}^{r,\a}}+n^\a\omega_r(f,1/n)_p\)\\
     &\le C \(\Vert  f-\L_{n,\l}(f)\Vert_{H_{\overline{p}}^{r,\a}}+n^\a\omega_r(f,1/n)_p\).
  \end{split}
\end{equation*}
Theorem~\ref{thR5} is proved.
\end{proof}

Now we show that the first term in the left-hand side of (\ref{eqR5}) cannot be dropped.

\begin{proposition}\label{pr3}
   Let $0<p\le \infty$, $0<\a\le r$, and $r\in \N$.
Suppose that $\{\L_{n,\l}\}$ is bounded in $L_p$
and that the following inequality holds for any $f\in L_p$ and $n\in \N$:
\begin{equation*}
  \Vert f-\L_{n,\l}(f)\Vert_{\overline{p}}\le C\w_r\(f,\frac 1n\)_p,
\end{equation*}
where $C$ is some constant independent of $f$ and $n$.
Then, for any non-constant function $f$, $f^{(r-1)}\in AC$, $f^{(r)}\in H_{p^*}^{r,\a}$ with $p^*=\max(1,p)$, and for any sequence
$\{\e_n\}$ with $\e_n\to 0+$ there holds
\begin{equation*}
  \frac{\theta_{r,\a}(f,1/n)_p}{\e_n n^\a\w_r(f,1/n)_p+\Vert f-\L_{n,\l}(f)\Vert_{H_{\overline{p}}^{r,\a}}}\to \infty\quad\text{as}\quad n\to \infty.
\end{equation*}
\end{proposition}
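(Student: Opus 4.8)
The plan is to show that the numerator stays at the order of the first summand in the denominator, while the error term $\|f-\mathcal{L}_{n,\l}(f)\|_{H_{\overline{p}}^{r,\a}}$ is of strictly smaller order. Write $A_n=n^\a\w_r(f,1/n)_p$ and $B_n=\|f-\mathcal{L}_{n,\l}(f)\|_{H_{\overline{p}}^{r,\a}}$. Taking $h=1/n$ in the supremum defining $\t_{r,\a}$ gives, for free, $\t_{r,\a}(f,1/n)_p\ge A_n$, so the quotient in question is at least $A_n/(\e_n A_n+B_n)=\bigl(\e_n+B_n/A_n\bigr)^{-1}$. Since $\e_n\to0+$ is given, \emph{it suffices to prove $B_n/A_n\to0$}, and this is what the remaining steps establish.

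First I would pin down the exact order of $A_n$. Because $f^{(r-1)}\in AC$ and $f^{(r)}\in H_{p^*}^{r,\a}\subset L_{p^*}$, the integral representation $\D_h^rf(x)=\int_{[0,h]^r}f^{(r)}(x+t_1+\dots+t_r)\,dt$ holds; combined with $\|\cdot\|_p\le\|\cdot\|_{p^*}$ and the $L_{p^*}$-Minkowski inequality this yields $\w_r(f,h)_p\le Ch^r\|f^{(r)}\|_{p^*}$, i.e.\ $A_n\le Cn^{\a-r}$. For the matching lower bound, rescale the representation to $h^{-r}\D_h^rf=\int_0^r\tau_{hs}f^{(r)}\,\r(s)\,ds$ (with $\r$ the density of a sum of $r$ uniforms) and use continuity of translation in $L_{p^*}$ to get $h^{-r}\D_h^rf\to f^{(r)}$ in $L_{p^*}$, hence in $L_p$ (again via $\|\cdot\|_p\le\|\cdot\|_{p^*}$). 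As $f$ is non-constant, $f^{(r)}\not\equiv0$, so $\|h^{-r}\D_h^rf\|_p\to\|f^{(r)}\|_p>0$ and therefore $\w_r(f,1/n)_p\ge cn^{-r}$ for large $n$. Thus $A_n\asymp n^{\a-r}$.

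The heart of the matter is the bound $B_n=o(n^{\a-r})$. The averaged $L_p$-part is immediate from the hypothesis: $\|f-\mathcal{L}_{n,\l}(f)\|_{\overline{p}}\le C\w_r(f,1/n)_p\le Cn^{-r}$. For the H\"older seminorm I would \emph{re-run the upper-bound part of the proof of Theorem~\ref{th1} with $w_\mathcal{L}=\w_r$}, observing that that argument (inequalities (\ref{eqth1pr9})--(\ref{eqth1pr14})) uses only the one-sided estimate $\|g-\mathcal{L}_{n,\l}(g)\|_{\overline{p}}\le C\w_r(g,1/n)_p$ available in the present hypothesis, never the full equivalence. This gives
\begin{equation*}
  |f-\mathcal{L}_{n,\l}(f)|_{H_{\overline{p}}^{r,\a}}\le C\Bigl(\sup_{h>0}\frac{\w_r(\D_h^rf,1/n)_p}{h^\a}+E_n(f)_{H_p^{r,\a}}\Bigr).
\end{equation*}
The extra smoothness enters through $g:=f^{(r)}\in H_{p^*}^{r,\a}$, which gives $\w_r(g,t)_{p^*}\le Ct^\a$; applying the representation once more yields $\w_{2r}(f,t)_p\le Ct^r\w_r(g,t)_{p^*}\le Ct^{r+\a}$ and $\w_r(\D_h^rf,1/n)_p\le C\min\{h^rn^{-\a},\,n^{-r}\}$. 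Feeding $\w_{2r}(f,t)_p\le Ct^{r+\a}$ into Theorem~\ref{th3} (inequality (\ref{eqM77D}) with $k=r$) produces $E_n(f)_{H_p^{r,\a}}\le Cn^{-r}$, while optimizing the $\min$ over $h$ gives $\sup_{h>0}h^{-\a}\w_r(\D_h^rf,1/n)_p\le Cn^{\a-r-\a^2/r}$. Hence $B_n\le Cn^{\a-r-\a^2/r}$.

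Combining, $B_n/A_n\le C\,n^{-\a^2/r}\to0$, so $\bigl(\e_n+B_n/A_n\bigr)^{-1}\to\infty$ and the proposition follows. The step I expect to be the main obstacle is the seminorm bound when $0<p<1$: there $\mathcal{L}_{n,\l}$ does not commute with $\D_h^r$ on non-polynomials and the triangle inequality degrades to the $p_1$-inequality, so all derivative estimates must be routed through $L_{p^*}=L_1$ (using $\|\cdot\|_p\le\|\cdot\|_1$ and the $L_1$-integral representation), and one must rely on the one-sided form of the Theorem~\ref{th1} estimate together with Theorem~\ref{th3} rather than on a direct commutation argument; for $1\le p\le\infty$ the same chain is cleaner because $\mathcal{L}_n$ is a multiplier and $\w_r(\D_h^rf,1/n)_p\le Cn^{-r}h^\a$ directly.
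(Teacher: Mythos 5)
Your proof is correct, and its technical core coincides with the paper's: both arguments reduce the claim to showing $B_n/A_n\to 0$, where $A_n=n^\a\w_r(f,1/n)_p$ and $B_n=\Vert f-\L_{n,\l}(f)\Vert_{H_{\overline{p}}^{r,\a}}$; both control the H\"older seminorm of the error by the upper-bound half of the proof of Theorem~\ref{th1} with $w_\L=\w_r$ (your explicit observation that inequalities (\ref{eqth1pr9})--(\ref{eqth1pr14}) use only the one-sided hypothesis is exactly right, and in fact tidies up the paper's own citation of (\ref{eqM6}), since Theorem~\ref{th1} formally assumes the two-sided equivalence (\ref{eqM5}) which Proposition~\ref{pr3} does not grant); and both use $\w_{2r}(f,t)_p\le Ct^r\w_r(f^{(r)},t)_{p^*}\le Ct^{r+\a}$ together with (\ref{eqM77D}) to get $E_n(f)_{H_p^{r,\a}}\le Cn^{-r}$. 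Where you genuinely diverge is the endgame. The paper argues by contradiction: assuming $A_n\le A_f B_n$, it combines the above bounds into $\w_r(f,1/n)_p\le CA_f n^{-r-\a}\Vert f^{(r)}\Vert_{H_{p^*}^{r,\a}}$ and then invokes external results (Lemma~1.5 of~\cite{SKO} and Theorem~3.5 of~\cite{diti07}) asserting that such decay is impossible for non-constant $f$ with $f^{(r-1)}\in AC$. You instead prove, self-containedly, the two-sided asymptotics $\w_r(f,1/n)_p\asymp n^{-r}$ -- the lower bound via $h^{-r}\D_h^r f\to f^{(r)}$ in $L_{p^*}$, hence in $L_p$, with $f^{(r)}\not\equiv 0$ forced by periodicity -- and conclude with an explicit polynomial rate $B_n/A_n=O(n^{-\a^2/r})$. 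Your route buys a quantitative statement and removes the dependence on outside literature; the paper's is shorter modulo those citations and never needs a lower bound for the modulus.

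Two minor remarks. First, your bound $\sup_{h>0}h^{-\a}\w_r(\D_h^r f,1/n)_p\le Cn^{\a-r-\a^2/r}$, obtained by optimizing $\min\{h^rn^{-\a},n^{-r}\}$, is suboptimal: estimating as in the paper's (\ref{eqpr3pr4}), $\Vert \D_\d^r\D_h^r f\Vert_{p^*}\le C\d^r\Vert\D_h^r f^{(r)}\Vert_{p^*}\le C\d^r h^\a |f^{(r)}|_{H_{p^*}^{r,\a}}$, gives $Cn^{-r}$ at once and hence $B_n/A_n=O(n^{-\a})$; this does not affect correctness since $\a^2/r>0$. Second, your closing comment that the case $1\le p\le\infty$ is "cleaner" via commutation should carry the caveat that the family $\L_{n,\l}$, unlike the single operator $\L_n$, does not commute with $\D_h^r$, so even for $p\ge 1$ one should run the unified route through Theorem~\ref{th1}'s upper bound (or pass to $\L_n$ via Lemma~\ref{lem3}); since your main argument is carried out uniformly in $p$, this is immaterial to the proof.
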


\begin{proof}
By using the inequality $\theta_{r,\a}(f,1/n)_p\ge n^\a \w_r(f,1/n)_p$ it is sufficient only to prove that
\begin{equation}\label{eqpr3pr1}
  \frac{\Vert f-\L_{n,\l}(f)\Vert_{H_{\overline{p}}^{r,\a}}}{n^\a\w_r(f,1/n)_p}\to 0 \quad\text{as}\quad n\to \infty.
\end{equation}

Suppose that there exists a constant $A_f>0$ such that for any $n\in\N$
\begin{equation}\label{eqpr3pr2}
  n^\a   \w_r(f,1/n)_p   \le A_f    \Vert f-\L_{n,\l}(f)\Vert_{H_{\overline{p}}^{r,\a}}.
\end{equation}
Recall, that if $f^{(r-1)}\in AC$ and $f^{(r)}\in L_p$, $1\le p\le \infty$, then
\begin{equation}\label{eqpr3pr3}
  \w_r(f,\d)_p\le C_{r,p} \d^r \Vert f^{(r)}\Vert_p
\end{equation}
and
\begin{equation}\label{eqpr3pr3dop}
  \w_{2r}(f,\d)_p\le C\d^r\w_{r}(f^{(r)},\d)_p \le C \d^{r+\a}\Vert f^{(r)}\Vert_{H_{p}^{r,\a}}
\end{equation}
(see~\cite[p. 53]{DL}).
Thus, by using H\"older's inequality, and (\ref{eqpr3pr3}), we get that for any $h>0$
\begin{equation}\label{eqpr3pr4}
  \w_r(\D_h^r f,\d)_p\le C \w_r(\D_h^r f,\d)_{p^*}\le C\d^r \Vert \D_h^r f^{(r)}\Vert_{p^*}.
\end{equation}
By using~(\ref{eqM77D}) and (\ref{eqpr3pr3dop}) we also have for any $n\in\N$
\begin{equation}\label{eqpr3pr44}
  E_{n}(f)_{H_p^{r,\a}}\le Cn^{-r}\Vert f^{(r)}\Vert_{H_{p^*}^{r,\a}}.
\end{equation}

Thus, combining (\ref{eqpr3pr2}), (\ref{eqpr3pr4}), (\ref{eqpr3pr44}) with (\ref{eqM6}), we finally obtain
\begin{equation*}\label{eqpr3pr6}
  \w_r\(f,1/n\)_p\le CA_f n^{-\a}\(\w_r(f,1/n)_{H_p^{r,\a}}+E_n(f)_{H_p^{r,\a}}\)\le CA_f n^{-r-\a}\Vert f^{(r)}\Vert_{H_{p^*}^{r,\a}}.
\end{equation*}
However, if $f^{(r-1)}\in AC$ and $f\not\equiv{\rm const}$, then the last statement is impossible, see Lemma~1.5 in~\cite{SKO} and Theorem~3.5 in~\cite{diti07}.

Thus, we have proved (\ref{eqpr3pr1}) and hence our proposition.
\end{proof}

The following two corollaries can be obtained by a standard scheme (see, for example,~\cite{BJ}, \cite{PrPr}, and~\cite{Pr87}).
In particular, by Theorem~\ref{th3} we obtain the following result.

\begin{corollary}
  Let $0<p\le\infty$, $0< \b\le \a\le r$, $r\in \N$, and $n\in \N$. Then the following inequalities hold for any $f\in H_p^{r,\a}$:
  \begin{equation*}
    E_n(f)_{H_p^{r,\b}}\le \frac C{n^{\a-\b}}\Vert f\Vert_{H_p^{r,\a}}
  \end{equation*}
  and
    \begin{equation*}
    E_n(f)_{H_p^{r,\b}}\le \frac C{n^{\a-\b}}E_n(f)_{H_p^{r,\a}}.
  \end{equation*}
\end{corollary}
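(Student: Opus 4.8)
The plan is to derive both inequalities from the integral Jackson-type estimate (\ref{eqM77D}) of Theorem~\ref{th3}, applied with the smoothness exponent $\b$ in place of $\a$. The borderline case $\b=\a$ is trivial: the first inequality then follows by choosing the zero polynomial, giving $E_n(f)_{H_p^{r,\a}}\le\Vert f\Vert_{H_p^{r,\a}}$, while the second holds with $C=1$. So I would concentrate on the genuine case $\b<\a$.

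For the first inequality I would start from
\[
E_n(f)_{H_p^{r,\b}}\le C\(\int_0^{1/n}\(\frac{\w_{r+k}(f,t)_p}{t^\b}\)^{p_1}\frac{\mathrm{d}t}{t}\)^\frac1{p_1},
\]
which is legitimate since $f\in H_p^{r,\a}$ also lies in $H_p^{r,\b}$ for $\b\le\a$. The decisive observation is that membership in $H_p^{r,\a}$ controls the higher-order modulus: by (\ref{svmod1}) one has $\w_{r+k}(f,t)_p\le 2^{k/p_1}\w_r(f,t)_p\le 2^{k/p_1}\Vert f\Vert_{H_p^{r,\a}}\,t^\a$. Inserting this bound yields $\w_{r+k}(f,t)_p/t^\b\le C\Vert f\Vert_{H_p^{r,\a}}\,t^{\a-\b}$, and since $\a-\b>0$ the resulting integral $\int_0^{1/n}t^{(\a-\b)p_1-1}\mathrm{d}t$ converges, equalling $\bigl((\a-\b)p_1\bigr)^{-1}n^{-(\a-\b)p_1}$. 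Taking the $p_1$-th root produces exactly the factor $n^{-(\a-\b)}$ together with the norm $\Vert f\Vert_{H_p^{r,\a}}$, giving the first inequality.

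For the second inequality I would invoke the translation-invariance of the best approximation. Let $T_n\in\mathcal{T}_n$ be a polynomial of best approximation of $f$ in $H_p^{r,\a}$, so that $\Vert f-T_n\Vert_{H_p^{r,\a}}=E_n(f)_{H_p^{r,\a}}$. Since the substitution $T\mapsto T_n+T$ permutes $\mathcal{T}_n$, one has $E_n(f)_{H_p^{r,\b}}=E_n(f-T_n)_{H_p^{r,\b}}$. Applying the first inequality to $g=f-T_n$, which again belongs to $H_p^{r,\a}$, gives
\[
E_n(f)_{H_p^{r,\b}}=E_n(g)_{H_p^{r,\b}}\le\frac{C}{n^{\a-\b}}\Vert g\Vert_{H_p^{r,\a}}=\frac{C}{n^{\a-\b}}E_n(f)_{H_p^{r,\a}}.
\]

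The argument is essentially routine; there is no serious obstacle once (\ref{eqM77D}) and (\ref{svmod1}) are available. The only points calling for minor care are the separate treatment of $\b=\a$ (where the exponent $(\a-\b)p_1-1$ drops to $-1$ and the integral would diverge), and the observation that $f-T_n$ remains in $H_p^{r,\a}$ so that the first inequality can be reapplied; both are immediate.
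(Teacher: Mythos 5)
Your proposal is correct and follows essentially the same route as the paper, which obtains this corollary ``by a standard scheme'' from Theorem~\ref{th3}: you apply (\ref{eqM77D}) with exponent $\b$, bound $\w_{r+k}(f,t)_p\le C\w_r(f,t)_p\le C\Vert f\Vert_{H_p^{r,\a}}t^{\a}$ inside the integral, and integrate to get the factor $n^{-(\a-\b)}$. The second inequality via translation invariance of $\mathcal{T}_n$ (applying the first bound to $f-T_n$, where $T_n$ is a best approximant in $H_p^{r,\a}$) is exactly the standard trick the paper alludes to, and your separate treatment of the borderline case $\b=\a$ is appropriate.
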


\begin{corollary}
  Let $0<p\le\infty$, $0< \b\le \a\le r$, $r,k\in \N$, $f\in H_p^{r,\a}$, and $0<\g<k$. Suppose also $f$ satisfies condition (\ref{eqM5_33}) in the case $0<p<1$ and $\a=r$. Then the following assertions are equivalent:

$(i)$
$
\psi_{k,r,\a}(f,h)_{p}=\mathcal{O}(h^\g),\quad h\to +0,
$

$(ii)$
$
E_n(f)_{H_p^{r,\a}}=\mathcal{O}\(n^{-\g}\),\quad n\to\infty,
$

$(iii)$
$
E_n(f)_{p}=\mathcal{O}\(n^{-\g-\a}\),\quad n\to\infty,
$

$(iv)$
$
E_n(f)_{H_p^{r,\b}}=\mathcal{O}\(n^{-\g-\a+\b}\),\quad n\to\infty.
$

If, in addition, $\a+\g<k$, then

$(v)$
$
\omega_k(f,h)_{p}=\mathcal{O}(h^{\g+\a}),\quad h\to +0.
$

\end{corollary}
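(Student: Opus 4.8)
The plan is to establish the cycle $(i)\Leftrightarrow(ii)\Leftrightarrow(iii)\Leftrightarrow(iv)$ by pairing the direct theorem with the inverse theorem and with Lemma~\ref{lem4}, and then to deduce $(v)$ from $(iii)$ through the classical converse inequality for $\w_k(\cdot)_p$ in $L_p$. The only tools needed are the Jackson-type bound of Corollary~\ref{corth3} (equivalently Theorem~\ref{th3}), the Bernstein--Stechkin-type bound of Theorem~\ref{thR4con}, the comparison between $E_n(f)_{H_p^{r,\a}}$ and $E_n(f)_p$ in Lemma~\ref{lem4}, and the monotone estimate of the preceding Corollary; the rest consists of elementary partial-sum and tail estimates.

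First I would treat $(i)\Leftrightarrow(ii)$. The implication $(i)\Rightarrow(ii)$ is immediate from Corollary~\ref{corth3}, since $E_n(f)_{H_p^{r,\a}}\le C\psi_{k,r,\a}(f,1/n)_p\le Cn^{-\g}$; note that it is precisely here, in the borderline case $0<p<1$, $\a=r$, that hypothesis (\ref{eqM5_33}) is invoked, so that $\psi_{k,r,\a}$ may legitimately appear on the right-hand side. For the converse $(ii)\Rightarrow(i)$ I would insert $E_\nu(f)_{H_p^{r,\a}}\le C(\nu+1)^{-\g}$ into the inverse inequality of Theorem~\ref{thR4con}. Because $\g<k$, the exponent $(k-\g)p_1$ is positive and
\[
\sum_{\nu=0}^{n}(\nu+1)^{kp_1-1}E_\nu(f)_{H_p^{r,\a}}^{p_1}
\le C\sum_{\nu=0}^{n}(\nu+1)^{(k-\g)p_1-1}
\le Cn^{(k-\g)p_1};
\]
raising to the power $1/p_1$ and dividing by $n^{k}$ gives $\psi_{k,r,\a}(f,1/n)_p\le Cn^{-\g}$, and the monotonicity of $\psi_{k,r,\a}(f,\cdot)_p$ in its second argument upgrades this discrete bound to the continuous statement $(i)$.

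Next I would run $(ii)\Leftrightarrow(iii)\Leftrightarrow(iv)$ through Lemma~\ref{lem4}. The lower bound $C_1 n^{\a}E_n(f)_p\le E_n(f)_{H_p^{r,\a}}$ yields $(ii)\Rightarrow(iii)$ at once. For $(iii)\Rightarrow(ii)$ I would substitute $E_\nu(f)_p\le C\nu^{-\g-\a}$ into the upper bound of Lemma~\ref{lem4}: the leading term obeys $n^{\a}E_n(f)_p\le Cn^{-\g}$, while the tail is controlled using $\g>0$, since $\sum_{\nu\ge n}\nu^{\a p_1-1}\nu^{-(\g+\a)p_1}=\sum_{\nu\ge n}\nu^{-\g p_1-1}\le Cn^{-\g p_1}$, which after taking the $p_1$-th root is again $\mathcal{O}(n^{-\g})$. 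The same two-sided argument with $\b$ in place of $\a$ in Lemma~\ref{lem4} gives $(iii)\Leftrightarrow(iv)$, the convergence of the tail now being ensured by $\b-\g-\a<0$ (which holds since $\b\le\a$ and $\g>0$). Alternatively, the quick direction $(ii)\Rightarrow(iv)$ follows directly from the second inequality of the preceding Corollary, $E_n(f)_{H_p^{r,\b}}\le Cn^{-(\a-\b)}E_n(f)_{H_p^{r,\a}}$.

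Finally, to get $(iii)\Rightarrow(v)$ under the additional assumption $\a+\g<k$, I would apply the classical converse inequality in $L_p$, namely $\w_k(f,1/n)_p\le Cn^{-k}\left(\sum_{\nu=0}^{n}(\nu+1)^{kp_1-1}E_\nu(f)_p^{p_1}\right)^{1/p_1}$, valid for all $0<p\le\infty$. Inserting $E_\nu(f)_p\le C\nu^{-\g-\a}$ and using $k-\g-\a>0$ produces a sum of order $n^{(k-\g-\a)p_1}$, whence $\w_k(f,1/n)_p\le Cn^{-(\g+\a)}$, i.e. $(v)$. I expect the genuinely delicate point of the whole argument to be the borderline case $0<p<1$, $\a=r$: there the direct theorem is available only in the $\psi$-form and only under (\ref{eqM5_33}), and this is exactly the hypothesis singled out in the statement that allows $(i)$ to serve as an endpoint of the cycle; one must also watch the admissible ranges of exponents, the strict inequality $\g<k$ driving the inverse-theorem sums at the right rate and $\g>0$, $\b\le\a$ guaranteeing convergence of the tail sums in Lemma~\ref{lem4}.
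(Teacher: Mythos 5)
Your proposal is correct and follows exactly the paper's own route: the paper proves $(i)\Leftrightarrow(ii)$ by citing Theorem~\ref{thR4con} and Corollary~\ref{corth3}, $(ii)\Leftrightarrow(iii)\Leftrightarrow(iv)$ by Lemma~\ref{lem4}, and $(iii)\Leftrightarrow(v)$ as the standard $L_p$ direct/inverse argument, and your partial-sum and tail computations are precisely the details behind those citations. The only cosmetic difference is that the paper asserts the full equivalence $(iii)\Leftrightarrow(v)$, whereas you prove only $(iii)\Rightarrow(v)$; the converse is immediate from Theorem~A, since $E_n(f)_p\le C\w_k(f,1/n)_p=\mathcal{O}(n^{-\g-\a})$.
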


\begin{proof}
The equivalence $(i) \Leftrightarrow (ii)$ follows form Theorem~\ref{thR4con} and Corollary~\ref{corth3}. The equivalences $(ii) \Leftrightarrow (iii) \Leftrightarrow (iv)$ follows form Lemma~\ref{lem4}. The equivalence $(iii) \Leftrightarrow (v)$ is standard,  see~\cite[Ch. 7, \S 3]{DL}, see also Theorem~A and~\cite[Theorem~5]{I} for the case $0<p<1$.
\end{proof}

\begin{remark}\label{rem1}
In all above results we suppose that $r\ge \a$. This assumption is essential. Indeed, if $\a>r$, then the quantity $E_n(f)_{H_p^{r,\a}}$ is not
well-defined. For example, let the function $f\not \equiv const$ be such that $\w_r(f,h)_p=\mathcal{O}(h^{r})$. Then, by using Theorem~B, we have for any $0<h\le 1/n$
 $$
 C_1h^{r-\a}\Vert T_n^{(r)} \Vert_p-C_1 \Vert f\Vert_{H_p^{r,\a}}\le E_n(f)_{H_p^{r,\a}}.
 $$
That is $E_n(f)_{H_p^{r,\a}}=\infty$.
\end{remark}

\begin{remark}
  Some of the above results remain true in more general H\"older spaces $H_p^{r,\w}$, with the norm
  $$
  \Vert f\Vert_{H_p^{r,\w}}=\Vert f\Vert_p+\sup_{h>0}\frac{\Vert \D^r_h f\Vert_p}{\w(h)},
  $$
  where the function $\w$ is some modulus of continuity such that the fucntion
  $
  \frac{\w(h)}{h^\a}
  $
  is monotonically decreasing, see e.g.~\cite{PrPr}.
\end{remark}

\medskip

\textbf{Acknowledgements} The authors are indebted to the referees for a
thorough reading and valuable suggestions, which allowed us to improve this paper considerably.


\begin{thebibliography}{16}
{


\bibitem{Br} Yu.A. Brudnyi, Criteria for the existence of derivatives in $L_p$,
Math. USSR-Sb. 2 (1) (1967) 35--55.

\bibitem{BR} J. Bustamante and C. Roldan, Direct and inverse results in H\"older norms, J. Approx. Theory 138 (2006) 112--132.

\bibitem{BJ99} J. Bustamante, M.A. Jim\'enez, The degree of best approximation in the Lipschitz norm by trigonometric polynomials,
Aportaciones Mat. Comun. 2 (1999),  23--30.

\bibitem{BJ} J. Bustamante, M.A. Jim\'enez, Trends in H\"older approximation, in: Proceedings of the Fifth International Conference on Approximation and Optimization in the Caribbean, Guadeloupe, France, 1999; M. Lassonde (Ed.), Approximation, Optimization and Mathematical Economics, Springer, Berlin, 2001, pp. 81--95.


\bibitem{DL} {R.A. DeVore, G.G. Lorentz},  Constructive Approximation, Springer-Verlag, New York, 1993.


\bibitem{DI} Z. Ditzian, K. Ivanov, Strong converse inequalities, J. d'Analyse Math. 61 (1993) 61--111.



\bibitem{DHI} Z.~Ditzian, V.~Hristov, K.~Ivanov,  Moduli of smoothness and $K$-functional in $L_p$, $0<p<1$,
Constr. Approx. {11} (1995) 67--83.



\bibitem{diti07} Z. Ditzian, S. Tikhonov, Moduli of smoothness of functions and their derivatives,
Studia Math. 180 (2) (2007) 143--160.



\bibitem{Io} N.I. Ioakimidis, An improvement of Kalandiya’s theorem, J. Approx. Theory 38 (1983) 354--356.



\bibitem{I} V.I. Ivanov, Direct and inverse theorems of approximation theory in the metrics $L_p$ for $0<p<1$,
Math. Notes 18 (5) (1975) 972--982.



\bibitem{Ka} A.I. Kalandiya, A direct method for the solution of the wind equation and its applications in elasticity theory,
Mat. Sb. 42 (1957) 249–272 (in Russian).

\bibitem{K03} Yu.S. Kolomoitsev, Description of a class of
functions with the condition $\omega_r(f,h)_p\le Mh^{r-1+1/p}$ for
$0<p<1$.  Vestn. Dnepr. Univ., Ser. Mat.  8 (2003) 31--43 (in Russian).


\bibitem{K} Yu.S. Kolomoitsev, On approximation of functions by trigonometric polynomials with incomplete spectrum in $L_p$, $0<p<1$,
J. Math. Sci. (N. Y.) 165 (4) (2010) 463--472.


\bibitem{K1}
Yu.S. Kolomoitsev, On moduli of smoothness and Fourier multipliers in $L_p$, $0< p< 1$,
Ukr. Math. J. 59 (9) (2007)  1364--1384.


\bibitem{KT} Yu.S. Kolomoitsev, R.M. Trigub, On the nonclassical approximation method for periodic functions by trigonometric polynomials,
J. Math. Sci. (N. Y.) 188 (2) (2013) 113--127.

\bibitem{Kop} {K.A. Kopotun}, On equivalence of moduli of smoothness of
splines in $L_p$, $0<p<1$, J. Approx. Theory 143 (1) (2006) 36--43.

\bibitem{Kr} V.G. Krotov, On differentiability of functions in $L_p$, $0<p<1$,
Sb. Math. USSR, 25 (1983), 101--119.

\bibitem{N} S.M. Nikolskii, Approximation of Functions of Several Variables and Imbedding Theorems, Springer-Verlag, Berlin, 1975.


\bibitem{peetre} J. Peetre, A remark on Sobolev spaces. The case $0<p<1$,  J. Approx. Theory 13 (1975) 218--228.



\bibitem{Pr87} J. Prestin, Best approximation in Lipschitz spaces, in: Coll. Math. Soc. Jan. Bolyai 49: Alfred Haar Memorial Conf. (Eds. J. Szabados, K. Tandori) Budapest Jan. Bolyai Math. Soc. 1987 and Amsterdam-Oxford-New York: North Holland Publ. Comp. 1987, 753--759.


\bibitem{PrPr} J. Prestin, S. Pr\"ossdorf, Error estimates in generalized trigonometric H\"older-Zygmund norms,
Z. Anal. Anwend. 9 (4) (1990) 343--349.


\bibitem{Pr} S. Pr\"ossdorf, Zur Konvergenz der Fourierreihen H\"olderstetiger Funktionen, Math. Nachr. 69 (1975), 7--14.


\bibitem{Ra} T.V. Radoslavova, Decrease orders of the $L_p$ -moduli of continuity ($0<p\le \infty$),
Analysis Mathematica 5 (3) (1979) 219--234.


\bibitem{R} R. Rathore, The problem of A.F. Timan on the precise order of decrease of the best approximations,
J. Approx. Theory  77 (2) (1994) 153--166.



\bibitem{RW} L. Rempulska, Z. Walczak, On approximation of $2\pi$-periodic functions in H\"older spaces,
Hokkaido Math. J. 33 (3) (2004)  647--656.


\bibitem{RS2} K.  Runovski, H.-J. Schmeisser, On families of linear polynomial
operators generated by Riesz kernels,
Eurasian Math. J. 1 (4) (2010) 124--139.

\bibitem{RSFaaF} K.  Runovski, H.-J. Schmeisser, On approximation methods generated by Bochner-Riesz kernels.
J. Fourier Anal. Appl. 14 (1) (2008) 16--38.



\bibitem{St51} S.B. Stechkin, On the order of the best approximations of continuous functions,
Izv. Akad. Nauk SSSR Ser. Mat. 15 (3) (1951) 219--242 (in Russian).


\bibitem{SKO} E.A. Storozhenko, V.G. Krotov, P. Oswald, Direct and converse theorems of Jackson type in $L_p$ spaces, $0<p<1$,
Math. USSR-Sb. 27 (1975) 355--374.


\bibitem{SO} E.A. Storozhenko, P. Oswald,  Jackson's theorem in the spaces $L\sp{p}({\R}\sp{k})$, $0<p<1$,
Sibirsk. Mat. Z. 19 (4) (1978), 888--901 (in Russian).



\bibitem{Tih04} S. Tikhonov,  On generalized Lipschitz classes and Fourier series,
Z. Anal. Anwend. 23 (4) (2004) 745--764.


\bibitem{TB} R.M. Trigub, E.S. Belinsky, { Fourier Analysis and
Appoximation of Functions}, Kluwer, 2004.


\bibitem{T68} R.M. Trigub, Linear summation methods and the absolute convergence of Fourier series,
Math. USSR-Izv. 2 (1) (1968) 21--46.



}

\end{thebibliography}
\end{document}